\def\NAT@def@citea{\def\@citea{\NAT@separator}}
\theoremstyle{plain}
\newtheorem{theorem}{Theorem}[section]
\newtheorem{corollary}[theorem]{Corollary}
\theoremstyle{definition}
\newtheorem{definition}[theorem]{Definition}
\theoremstyle{remark}
\begin{document}


\title{Infinitely many sign-changing solutions for the nonlinear Schr\"odinger-Poisson system with $p$-laplacian}

\author{
\name{Shuo Ren, Huixing Zhang\thanks{CONTACT Huixing Zhang. Email: huixingzhangcumt@163.com}, Zhen Cheng and Yan Gao}
\affil{School of Mathematics, China University of Mining and Technology, Xuzhou, People's Republic of China}
}

\maketitle

\begin{abstract}
In this paper, we consider the following Schr\"odinger-Poisson system with $p$-laplacian
\begin{equation}
	\begin{cases}
		-\Delta_{p}u+V(x)|u|^{p-2}u+\phi|u|^{p-2}u=f(u)\qquad&x\in\mathbb{R}^{3},\\
		-\Delta\phi=|u|^{p}&x\in\mathbb{R}^{3}.
	\end{cases}\notag
\end{equation}
We investigate the existence of multiple sign-changing solutions. By using the method of invariant sets of descending flow, we prove that this system has infinitely many sign-changing solutions. This system is new one coupled by Schr\"odinger equation of $p$-laplacian with a Poisson equation. Our results complement the study made by Zhaoli Liu, Zhiqiang Wang, Jianjun Zhang (Annali di Matematica Pura ed Applicata, 195(3):775-794(2016)).
\end{abstract}

\begin{keywords}
Schr\"odinger-Poisson system; $p$-laplacian operator; Invariant sets of descending flow; Sign-changing solutions
\end{keywords}

\section{Introduction}

In this paper, we are motivated by \cite{liu2016infinitely} and study the existence of sign-changing solutions to the following Schr\"odinger-Poisson system with $p$-laplacian
\begin{equation}
	\begin{cases}
		-\Delta_{p}u+V(x)|u|^{p-2}u+\phi|u|^{p-2}u=f(u)\qquad&x\in\mathbb{R}^{3},\\
		-\Delta\phi=|u|^{p}&x\in\mathbb{R}^{3},
	\end{cases}\label{1.1}\tag{1.1}
\end{equation}
where $\Delta_{p}u=\mathrm{div}(|\nabla u|^{p-2}\nabla u)$,\;$1<p<3$,\;$p^{*}=\frac{3p}{3-p}$.

From the view of mathematics, the system \eqref{1.1} can be regarded as an extension of the Schr\"odinger-Poisson system
\begin{equation}
	\begin{cases}
		-\Delta u+V(x)u+\phi u=f(u)\qquad &x\in\mathbb{R}^{3},\\
		-\Delta\phi=u^{2}&x\in\mathbb{R}^{3}.\label{1.2}\tag{1.2}
	\end{cases}
\end{equation}

System like \eqref{1.2} originates from quantum mechanics models \cite{benguria1981thomas,lieb1997thomas} and from semiconductor theory \cite{lions1987solutions,markowich2012semiconductor} and
describes the interaction of a quantum particle with an electromagnetic field. For the further physical background, we refer the readers to \cite{ambrosetti2008multiple,benci1998eigenvalue,benci2002solitary}.

The $p$-laplacian equation arises naturally in various contexts of physics, for instance, in the study of non-Newtonian fluids (the case of a Newtonian fluid corresponding to $p=2$), and in the study of nonlinear elasticity problems. It also appears in the search for solitons of certain Lorentz-invariant nonlinear field equations. There also have been many interesting works about the existence of positive solutions, multiple solutions, ground states and semiclassical states to $p$-laplacian type equation via variational methods, see for instance \cite{bartsch2004superlinear,bartsch2005nodal,du2021schrodinger,du2022quasilinear} and the references therein.

In recent years, there has been increasing attention to the existence of nontrivial solutions for the  $p$-laplacian equatuion. Bartsch, Liu and Weth \cite{liu2015multiple} study the quasilinear problem
\begin{equation}
	-\Delta_{p}u=f(x,u),\qquad u\in W_{0}^{1,p}(\Omega)\label{1.3}\tag{1.3}
\end{equation}
on a bounded domain $\Omega\subset\mathbb{R}^{N}$ with smooth boundary $\partial\Omega$. They set up
closed convex subsets of $X$ with non-empty interior and construct critical points outside of these closed convex subsets which will be nodal (that is, sign-changing) solutions of equation \eqref{1.3}. And they prove existence and multiplicity results about sign changing solutions of problem \eqref{1.3} with $f$ being subcritical and $f(x,u)/|t|^{p-2}$ being superlinear at $t=\infty$.

Liu, Zhao and Liu \cite{liu2018system} consider the system of $p$-laplacian equations with critical growth
\begin{equation}
	\begin{cases}
		-\Delta_{p} u_{j}=\mu_{j}|u_{j}|^{p-2}u_{j}+\sum_{i=1}^{k}\beta_{ij}|u_{i}|^{\frac{p^{*}}{2}}|u_{j}|^{\frac{p^{*}}{2}-2}\qquad&\text{in}\;\Omega,\\
		u_{j}=0&\text{on}\;\partial\Omega,\qquad j=1,2,\cdots,k,
	\end{cases}\notag
\end{equation}
where $\Omega$ is a bounded smooth domain in $\mathbb{R}^{N},1<p<N,p^{*}=\frac{Np}{N-p}>2,0<\mu_{j}<\lambda_{1}(\Omega)$ the first eigenvalue of the $p$-laplacian operator $-\Delta_{p}$ with the Dirichlet boundary condition, $\beta_{jj}>0,\beta_{ij}=\beta_{ji}\leqslant0$, for $i\not=j$. The existence of infinitely many sign-changing
solutions is proved by the truncation method and by the concentration analysis on the approximating solutions, provided $N>p+p^{2}$. Cao, Peng and Yan \cite{cao2012infinitely} prove the existence of infinitely many solutions for the following elliptic problem with critical Sobolev growth
\begin{equation}
	\begin{cases}
		-\Delta_{p}u=|u|^{p^{*}-2}u+\mu|u|^{p-2}u\qquad&\text{in}\;\Omega,\\
		u=0\qquad&\text{on}\;\partial\Omega,
	\end{cases}\notag
\end{equation}
provided $N>p^{2}+p,1<p<N,p^{*}=\frac{NP}{N-p},\mu>0$ and $\Omega$ is an open bounded domain in $\mathbb{R}^{N}$. Existence and multiplicity of solutions of $p$-laplacian equation have also gained much interest in recent years, see, for example, \cite{ambrosetti1996multiplicity,ambrosetti1997quasilinear,azorero1994some,chabrowski1995multiple,garcia1991multiplicity,pomponio2010schrodinger}.

There is an extensive work on the existence of solution of Schr\"odinger-Poisson system. Via the method of invariant sets of descending flow, Liu, Wang and Zhang prove that the following system has infinitely many sign-changing solutions
\begin{equation}
	\begin{cases}
		-\Delta u+V(x)u+\phi u=f(u)\qquad&\text{in}\;\mathbb{R}^{3},\\
		-\Delta\phi=u^{2}&\text{in}\;\mathbb{R}^{3}.
	\end{cases}\label{1.4}\tag{1.4}
\end{equation}
By minimax arguments in the presence of invariant sets, they obtain the existence of sign-changing solutions to \eqref{1.4}. For  infinitely many sign-changing solutions of \eqref{1.4}, the above framework is not directly applicable due to changes of geometric nature of the variational formulation. They use a perturbation approach by adding a term growing faster than the monomial of degree 4 with a small coefficient $\lambda>0$. For the perturbed problems, they apply the program above to establish the existence of multiple sign-changing solutions, and a convergence argument allows them to pass limit to the original system. By using the concentration compactness principle, Azzollini and Pomponio \cite{azzollini2008ground} prove the existence of a ground state solution of \eqref{1.4} when $f(u)=|u|^{p-2}u$ and $p\in(3,6)$. But no symmetry information concerning, this ground state solution was given. For more results of Schr\"odinger-Poisson system, we refer the readers to \cite{bartsch2004superlinear,bartsch2005nodal,bartsch1995existence,d2006standing,d2004non,d2002non,ianni2008concentration,liu2001invariant} and the references therein.

Du et al. \cite{du2022quasilinear} consider the following problem
\begin{equation}
	\begin{cases}
		-\Delta_{p}u+|u|^{p-2}u+\lambda\phi|u|^{p-2}u=|u|^{p-2}u\qquad&\text{in}\;\mathbb{R}^{3},\\
		-\Delta\phi=|u|^{p}&\text{in}\;\mathbb{R}^{3},
	\end{cases}\notag
\end{equation}
and produce the bounded Palais-Smale sequences. The existence of nontrivial solutions of the system is obtained by the mountain pass theorem. The key point is $q=2p$. For $2p\leqslant q<p^{*}$, the boundedness of the Palais-Smale sequence at the mountain pass level is easily proved. But for $p<q<2p$, it is tricky to obtain the boundedness of the Palais-Smale sequence. To overcome this difficulty, the scaling technique and the cut-off technique are used to deal with the case $p<q\leqslant \frac{2p(p+1)}{p+2}$ and the case $\frac{2p(p+1)}{p+2}<q<2p$, respectively.

In order to study the existence of sign-changing solutions to system \eqref{1.1}, we will adopt an idea from \cite{bartsch2005nodal} to construct an auxiliary operator $A$, which is the starting point in
constructing a pseudo-gradient vector field guaranteeing existence of the desired invariant sets of the flow. The method of invariant sets of descending flow has been used widely in dealing with sign-changing solutions of elliptic problem, see \cite{bartsch2004superlinear,bartsch2005nodal,liu2001invariant,liu2016infinitely} and the references therein. Moreover,  we need an abstract critical point theory developed by Liu et al. \cite{liu2015multiple}.

\subsection{Main results}\label{class}

In what follows, we assume $V(x)$ satisfies following conditions.\\
$(V_{0})\;V(x)\in C(\mathbb{R}^{3},\mathbb{R}),\,\inf_{\mathbb{R}^{3}}V(x)>0.$\\
$(V_{1})$\;There exists\;$r_{0},$ such that $M>0,$
\[\lim_{|y|\to\infty}m\{x\in \mathbb{R}^{3}:|x-y|\leqslant r_{0},V(x)\leqslant M\}=0,\]

where $m$ means Lebesgue measure on $\mathbb{R}^{3}$.\\
$(V_{2})\;V(x)$ is differentiable, for $\nabla V(x)\cdot x\in L^{\infty}(\mathbb{R}^{3})\cup L^{\frac{p^{*}}{p^{*}-p}}(\mathbb{R}^{3})$,
\[\frac{\mu-p}{2p-\mu}V(x)+\nabla V(x)\cdot x\geqslant0,\qquad a.e.\;x\in\mathbb{R}^{3}.\]
Moreover, we assume $f$ satisfies the following hypotheses.\\
$(f_{1})\;f\in C(\mathbb{R},\mathbb{R}),$ and $\lim_{s\to 0}\frac{f(s)}{|s|^{p-1}}=0.$\\
$(f_{2})$\;There exists\;$c>0$ and $p<q<p^{*}$ such that
\[|f(t)|\leqslant c(1+|t|^{q-1}),\qquad\forall t\in\mathbb{R}.\]
$(f_{3})$\;There exists\;$\mu>p,$ such that for $t\not=0$,
\[0<F(t):=\int_0^t f(s)ds\leqslant \frac{1}{\mu}tf(t).\]
Our main results on problem \eqref{1.1} are the following.

\newtheorem{thm}{Theorem}[section]
\begin{thm}
	If $(V_{0})-(V_{1})$ and $(f_{1})-(f_{3})$ hold and $\mu>2p$, then system \eqref{1.1} has one sign-changing solution. If moreover $f$ is odd, then problem \eqref{1.1} has infinitely many sign-changing solutions.\label{theorem1.1}
\end{thm}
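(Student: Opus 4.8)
The plan is to reduce the system to a single nonlocal equation and then run the method of invariant sets of descending flow. First I would solve the linear Poisson equation $-\Delta\phi=|u|^{p}$: for each $u\in W^{1,p}(\mathbb{R}^{3})$ the function
\[
\phi_{u}(x)=\frac{1}{4\pi}\int_{\mathbb{R}^{3}}\frac{|u(y)|^{p}}{|x-y|}\,dy\in D^{1,2}(\mathbb{R}^{3})
\]
is the unique solution (well defined since $1<p<3$ forces $|u|^{p}\in L^{6/5}$), and the map $u\mapsto\phi_{u}$ is continuous, homogeneous of degree $p$, with $\phi_{u}\geqslant 0$. Substituting back, solutions of \eqref{1.1} correspond to critical points of
\[
I(u)=\frac{1}{p}\int_{\mathbb{R}^{3}}\bigl(|\nabla u|^{p}+V(x)|u|^{p}\bigr)\,dx+\frac{1}{2p}\int_{\mathbb{R}^{3}}\phi_{u}|u|^{p}\,dx-\int_{\mathbb{R}^{3}}F(u)\,dx
\]
on $W=\{u\in W^{1,p}(\mathbb{R}^{3}):\int_{\mathbb{R}^{3}}V(x)|u|^{p}\,dx<\infty\}$ with $\|u\|^{p}=\int_{\mathbb{R}^{3}}(|\nabla u|^{p}+V|u|^{p})$. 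Conditions $(f_{1})$–$(f_{2})$ make $I$ well defined and $C^{1}$, and $I$ is even once $f$ is odd; the role of $(V_{0})$–$(V_{1})$ is to render the embedding $W\hookrightarrow L^{s}(\mathbb{R}^{3})$ compact for $p\leqslant s<p^{*}$ (a Bartsch--Wang type coercivity argument), which restores compactness on $\mathbb{R}^{3}$.

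Next I would verify the Palais--Smale condition. Using $(f_{3})$ and the degree-$2p$ homogeneity of the nonlocal term, for a $(PS)$ sequence $u_{n}$ one gets
\[
I(u_{n})-\frac{1}{\mu}\langle I'(u_{n}),u_{n}\rangle\geqslant\Bigl(\frac{1}{p}-\frac{1}{\mu}\Bigr)\|u_{n}\|^{p}+\Bigl(\frac{1}{2p}-\frac{1}{\mu}\Bigr)\int_{\mathbb{R}^{3}}\phi_{u_{n}}|u_{n}|^{p}\,dx,
\]
and the hypothesis $\mu>2p$ is precisely what makes both coefficients nonnegative, so the right-hand side controls $\|u_{n}\|^{p}$ and the sequence is bounded. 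The compact embedding then yields a strongly convergent subsequence, giving $(PS)$.

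The heart of the argument is the auxiliary operator $A$ and the invariant cones. For fixed $u\in W$ I would define $w=A(u)$ as the unique solution of
\[
-\Delta_{p}w+V(x)|w|^{p-2}w+\phi_{u}|w|^{p-2}w=f(u),\qquad w\in W,
\]
which exists and is unique because the left-hand side is a strictly monotone, coercive operator on $W$ while $f(u)\in W^{*}$. One then checks that $A$ is continuous, that its fixed points are exactly the critical points of $I$, and that $u-A(u)$ is a descent direction in the sense $\langle I'(u),u-A(u)\rangle\geqslant c\|u-A(u)\|^{p}$ and $\|I'(u)\|\leqslant C\|u-A(u)\|^{p-1}$, so $A$ generates a well-behaved descending flow. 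The sign condition $tf(t)>0$ for $t\neq 0$ (forced by $(f_{3})$), together with a comparison principle for the $p$-Laplacian, then gives the crucial invariance: for small $\epsilon>0$ the neighborhoods $P^{+}_{\epsilon}=\{u:\mathrm{dist}(u,P^{+})<\epsilon\}$ and $P^{-}_{\epsilon}=\{u:\mathrm{dist}(u,P^{-})<\epsilon\}$ of the positive and negative cones satisfy $A(\overline{P^{\pm}_{\epsilon}})\subset P^{\pm}_{\epsilon}$. I expect this quantitative invariance estimate --- bounding $\mathrm{dist}(A(u),P^{\pm})$ by a fixed fraction of $\mathrm{dist}(u,P^{\pm})$ in the absence of any linear spectral structure and with the extra nonlocal weight $\phi_{u}$ sitting inside the auxiliary equation --- to be the main obstacle.

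Finally, with $P^{\pm}_{\epsilon}$ in hand, all nonnegative (resp.\ nonpositive) solutions lie in $P^{+}_{\epsilon}$ (resp.\ $P^{-}_{\epsilon}$), so any critical point in $W\setminus(P^{+}_{\epsilon}\cup P^{-}_{\epsilon})$ is sign-changing. I would then invoke the abstract critical point theorem of \cite{liu2015multiple,liu2016infinitely}: a linking/mountain-pass construction relative to the invariant sets produces one critical level outside $P^{+}_{\epsilon}\cup P^{-}_{\epsilon}$, yielding a sign-changing solution, and when $f$ is odd the symmetric version produces an unbounded sequence of such critical values, hence infinitely many sign-changing solutions.
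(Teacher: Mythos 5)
Your overall strategy coincides with the paper's: reduction to the single nonlocal equation via $\phi_{u}$, compactness of $E\hookrightarrow L^{s}(\mathbb{R}^{3})$, $s\in[p,p^{*})$, from $(V_{0})$--$(V_{1})$, boundedness of (PS) sequences from $\mu>2p$ together with the degree-$2p$ homogeneity of $\int_{\mathbb{R}^{3}}\phi_{u}|u|^{p}$, the auxiliary operator $A$, invariance of the cone neighborhoods $P_{\varepsilon}^{\pm}$ (and you correctly identify this as the delicate point --- the paper's Lemma \ref{lemma3.6} proves it exactly as you predict, by a distance contraction in which $\phi_{u}\geqslant0$ lets one discard the nonlocal term), and finally the two abstract theorems of Liu--Liu--Wang for one solution and, in the odd case, infinitely many. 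However, one step fails as you state it: the descent estimates. You claim, uniformly in $1<p<3$, that $\langle I'(u),u-A(u)\rangle\geqslant c\|u-A(u)\|^{p}$ and $\|I'(u)\|\leqslant C\|u-A(u)\|^{p-1}$. For $1<p<2$ the first inequality is not provable: the monotonicity of the $p$-Laplacian is degenerate in this range, and the available pointwise inequality $\bigl(|x|^{p-2}x-|y|^{p-2}y\bigr)(x-y)\geqslant d\bigl(|x|+|y|\bigr)^{p-2}|x-y|^{2}$ yields only
\[
\langle I'(u),u-A(u)\rangle\geqslant a_{1}\|u-A(u)\|_{E}^{2}\bigl(\|u\|_{E}+\|A(u)\|_{E}\bigr)^{p-2},
\]
which, since $p-2<0$ and $\|u-A(u)\|_{E}\leqslant\|u\|_{E}+\|A(u)\|_{E}$, is \emph{weaker} than $c\|u-A(u)\|_{E}^{p}$, not equivalent to it. Symmetrically, for $p>2$ the gradient bound acquires the factor $\bigl(\|u\|_{E}+\|A(u)\|_{E}\bigr)^{p-2}$, and in both regimes the nonlocal term $\phi_{u}$ forces an extra factor $\bigl(1+\|u\|_{E}^{p}\bigr)$ that your clean bounds suppress (this is the content of the paper's Lemma \ref{lemma3.3}). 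These corrections are not cosmetic: the deformation argument (Lemma \ref{lemma3.8}) must be run twice, with an a priori bound $L$ on the flow segment compensating the degenerate weight $L^{p-2}$ when $1<p\leqslant2$, and the admissible $\varepsilon_{0}\leqslant\min\{a_{1}\beta\delta L^{p-2}/16,\ a_{1}\beta^{p-1}\delta/8\}$ depends on which estimate is in force; with your claimed uniform estimates the flow-time bookkeeping would simply be wrong for $p<2$.

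A second, smaller gap: $A$ is merely continuous (and compact), so it does not itself generate a flow --- the Cauchy problem for $\tau'=-(\tau-A(\tau))$ need not have unique solutions. The paper replaces $A$ by a locally Lipschitz operator $B$ (Lemma \ref{lemma3.7}) that inherits the cone invariance, the two-sided comparability $\tfrac12\|u-B(u)\|\leqslant\|u-A(u)\|\leqslant2\|u-B(u)\|$, and the (case-dependent) descent inequalities, and builds the deformation from $B$. Your phrase ``$A$ generates a well-behaved descending flow'' elides this standard but necessary construction. Also note that invariance is only needed (and only proved) on $\partial P_{\varepsilon}^{\pm}$, not on the closures, and that beyond (PS) one needs the quantitative lower bound $\|u-A(u)\|_{E}\geqslant\beta$ on $I^{-1}[a,b]\cap\{\|I'\|\geqslant\alpha\}$ (Lemma \ref{lemma3.5}) and the level estimate $c_{*}\geqslant\varepsilon^{p}/p$ on $\Sigma=\partial P_{\varepsilon}^{+}\cap\partial P_{\varepsilon}^{-}$ (Lemmas \ref{lemma3.9}--\ref{lemma3.10}) to verify the hypotheses of the abstract theorems; your sketch gestures at these but does not supply them.
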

\begin{thm}
	If $(V_{0})-(V_{2})$ and $(f_{1})-(f_{3})$ hold and $\mu>\frac{2p(p+1)}{p+2}$, then system \eqref{1.1} has one sign-changing solution. If in addition $f$ is odd, then problem \eqref{1.1} has infinitely many sign-changing solutions.\label{theorem1.2}
\end{thm}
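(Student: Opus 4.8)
The plan is to prove Theorem~\ref{theorem1.2} by adapting the perturbation scheme of Liu--Wang--Zhang \cite{liu2016infinitely} to the $p$-Laplacian setting, while exploiting condition $(V_2)$ to obtain the boundedness of the relevant sequences under the weaker restriction $\mu>\frac{2p(p+1)}{p+2}$. First I would set up the variational framework. Since $-\Delta\phi=|u|^p$ has, for each $u$ in the natural energy space $E=\{u\in W^{1,p}(\mathbb{R}^3):\int_{\mathbb{R}^3}V(x)|u|^p\,dx<\infty\}$, a unique solution $\phi_u(x)=\frac{1}{4\pi}\int_{\mathbb{R}^3}\frac{|u(y)|^p}{|x-y|}\,dy$ given by the Newtonian potential, I would substitute it back to obtain the single-variable functional
\begin{equation}
	J(u)=\frac{1}{p}\int_{\mathbb{R}^3}\bigl(|\nabla u|^p+V(x)|u|^p\bigr)\,dx+\frac{1}{2p}\int_{\mathbb{R}^3}\phi_u|u|^p\,dx-\int_{\mathbb{R}^3}F(u)\,dx.\notag
\end{equation}
The condition $(V_1)$ guarantees a compact embedding $E\hookrightarrow L^s(\mathbb{R}^3)$ for $p\le s<p^*$, which is what makes the nonlocal and nonlinear terms weakly continuous.

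Next I would implement the method of invariant sets of descending flow. Following the idea imported from \cite{bartsch2005nodal}, I would construct an auxiliary operator $A:E\to E$ so that $u-A(u)$ generates a pseudo-gradient vector field for $J$; here $A(u)$ solves the auxiliary $p$-Laplacian problem $-\Delta_p w+V(x)|w|^{p-2}w+\phi_u|w|^{p-2}w=f(u)$, and the positive and negative cones $P^{\pm}_\varepsilon=\{u:\mathrm{dist}(u,\mp P)<\varepsilon\}$ are shown to be positively invariant under the descending flow built from $A$. Sign-changing critical points are then located outside $P^+_\varepsilon\cup P^-_\varepsilon$. To produce \emph{infinitely many} such solutions I would invoke the abstract critical-point theorem of Liu et al. \cite{liu2015multiple}: when $f$ is odd, $J$ is even, the cones are symmetric ($P^+_\varepsilon=-P^-_\varepsilon$), and a $\mathbb{Z}_2$-genus / minimax construction over a sequence of symmetric subsets yields an unbounded sequence of critical values each carried by a sign-changing solution.

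The genuine obstacle, exactly as flagged in the discussion of \cite{du2022quasilinear}, is compactness: proving that the Cerami or Palais--Smale sequences of $J$ (and of its perturbed versions) are bounded when $p<\mu<2p$. The nonlocal term $\frac{1}{2p}\int\phi_u|u|^p$ is $2p$-homogeneous, so the standard test $\langle J'(u),u\rangle-\frac{1}{\mu}\|$-type estimate leaves this term with the wrong sign once $\mu<2p$, and the naive argument fails. This is precisely where $(V_2)$ enters: I would use the Pohozaev-type identity for \eqref{1.1}, testing against the dilation generator $x\cdot\nabla u$, to produce a second relation among the energy pieces. Combining the Pohozaev identity with the Nehari-type relation $\langle J'(u),u\rangle=0$, the inequality $\frac{\mu-p}{2p-\mu}V(x)+\nabla V(x)\cdot x\ge0$ from $(V_2)$ is tailored so that the potential and gradient contributions combine with a favorable coefficient, allowing one to absorb the nonlocal term and deduce an a priori bound on $\|u\|_E$ for the whole range $\mu>\frac{2p(p+1)}{p+2}$. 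The threshold $\frac{2p(p+1)}{p+2}$ should emerge from balancing the scaling weights of the four terms in the Pohozaev identity against the Sobolev exponent $p^*=\frac{3p}{3-p}$. Once boundedness is secured, the compact embedding from $(V_1)$ upgrades bounded sequences to strongly convergent subsequences, the $(PS)$ (or Cerami) condition holds, and the abstract multiplicity machinery delivers the infinitely many sign-changing solutions; a final approximation argument passes the perturbation parameter to zero to recover solutions of the original system.
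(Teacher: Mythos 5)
Your overall strategy is the paper's strategy---perturbation, invariant sets of descending flow with the abstract theorems of \cite{liu2015multiple}, and condition $(V_2)$ entering through a Pohozaev identity---but there is a genuine gap in how you deploy the Pohozaev argument. You propose to bound the Palais--Smale (or Cerami) sequences of the \emph{unperturbed} functional $J=I$ by ``combining the Pohozaev identity with the Nehari-type relation $\langle J'(u),u\rangle=0$''. Neither identity is available along a PS sequence: $\langle I'(u_n),u_n\rangle$ is merely $o(\|u_n\|_E)$, and the Pohozaev identity is a property of \emph{exact} solutions, obtained by testing the equation against $x\cdot\nabla u$ and integrating by parts; for a PS sequence there is no equation to test and no control of the error terms. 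Consequently your step ``once boundedness is secured \ldots the (PS) condition holds, and the abstract multiplicity machinery delivers the solutions'' is circular as written: the deformation flow underlying the minimax machinery requires PS-boundedness \emph{in advance}, and for $\frac{2p(p+1)}{p+2}<\mu\leqslant 2p$ this fails for $I$ itself, precisely because the nonlocal term $\frac{1}{2p}\int\phi_u|u|^p$ is $2p$-homogeneous.

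The paper resolves this by reversing the order of your last two sentences. It first fixes $r\in\bigl(\max(2p,q),p^*\bigr)$ and works throughout with $I_\lambda(u)=I(u)-\frac{\lambda}{r}\int_{\mathbb{R}^3}|u|^r$: since $r>2p$, boundedness of PS sequences for $I_\lambda$ is obtained directly (Lemma \ref{lemma4.3}, via an interpolation argument between $L^p$ and $L^r$), so the entire Section~3 machinery applies to $I_\lambda$ and yields exact sign-changing critical points $u_\lambda$ (and, in the odd case, critical values $c_j(\lambda)\to\infty$), with minimax levels bounded uniformly in $\lambda$ because the comparison maps are built from rescaled functions $R^2v_i(R\cdot)$ with $R$ chosen independent of $\lambda$. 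Only then, for the exact solutions $u_\lambda$, does the paper combine the energy identity \eqref{4.10}, the Nehari identity \eqref{4.11} and the Pohozaev identity \eqref{4.12} with the multipliers $\frac{5\mu}{2}-3p$, $-1$, $p-\frac{\mu}{2}$: condition $(V_2)$ makes the potential block nonnegative, and $\mu>\frac{2p(p+1)}{p+2}$ is exactly the positivity of the coefficient $\frac{2\mu-2p-2p^2+p\mu}{2p}$ of $\int_{\mathbb{R}^3}|\nabla u_\lambda|^p$---so the threshold comes from this multiplier algebra, not from balancing scaling weights against $p^*$ as you guessed. The resulting $\lambda$-uniform bound permits passing $\lambda\to0^+$, with the lower bound $c_\lambda\geqslant\frac{1}{p}\varepsilon^p$ keeping the limit nontrivial and outside $P_\varepsilon^+\cup P_\varepsilon^-$. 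To repair your proposal, make the perturbation the primary device and restrict the Pohozaev/Nehari computation to the exact critical points $u_\lambda$ of the perturbed functionals.
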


The outline of our argument is as follows. In Section 2 we show the variational framework of our problem, some preliminary properties of $\phi_{u}$ and some lemmas. We prove some lemmas and Theorem \ref{theorem1.1} by an abstract critical point theory developed by Liu et al. \cite{liu2015multiple} in Section 3. Section 4 is devoted to proving Theorem \ref{theorem1.2} by a perturbation approach.

\section{Preliminaries and functional setting}

In this paper, we make use of the following notations.\\
\begin{itemize}
	\item $1\leqslant p<\infty,\|u\|_{p}=\bigl(\int_{\mathbb{R}^{3}}|u|^{p}\mathrm{d}x\bigr)^{\frac{1}{p}};$
	\item $\|u\|_{W^{1,p}}=\bigl(\int_{\mathbb{R}^{3}}(|\nabla u|^{p}+|u|^{p})\mathrm{d}x\bigr)^{\frac{1}{p}};$
	\item $D^{1,2}(\mathbb{R}^{3}),$ for $N\geqslant3,$ is the space defined as follows: let $2^{*}=\frac{2N}{N-2}$, then
	\[D^{1,2}(\mathbb{R}^{3})=\biggl\{u\in L^{2^{*}}(\mathbb{R}^{3})\;\bigg|\;\frac{\partial u}{\partial x_{i}}L^{2}(\mathbb{R}^{3}),i=1,2,\cdots,N\biggr\}.\]
	This space has a Hilbert structure when endowed with the scalar product
	\[(u,v)=\int_{\mathbb{R}^{3}}\nabla u\cdot\nabla v \mathrm{d}x,\]
	so that the corresponding norm is
	\[\|u\|=\biggl(\int_{\mathbb{R}^{3}}|\nabla u|^{2}\mathrm{d}x\biggr)^{\frac{1}{2}}.\]
	The space $C_{0}^{\infty}(\mathbb{R}^{3})$ is dense in $D^{1,2}(\mathbb{R}^{3})$.
	\item We define the Sobolev space
	\[E=\{u\in W^{1,p}(\mathbb{R}^{3}):\int_{\mathbb{R}^{3}}V(x)|u|^{p}\mathrm{d}x<\infty\},\]
	with the  corresponding norm
	\[\|u\|_{E}=\biggl(\int_{\mathbb{R}^{3}}\bigl(|\nabla u|^{p}+V(x)|u|^{p}\bigr)\mathrm{d}x\biggr)^{\frac{1}{p}}.\]
	Then $E$ is a Banach space.
	\item $C,C_{i}(i=1,2,\cdots,n):$ denote (possibly different) positive constants.
\end{itemize}

According to \cite{du2022quasilinear}, for any given $u\in W^{1,p}(\mathbb{R}^{3})$, there exists a unique  
\[\phi_{u}(x)=\frac{1}{4\pi}\int_{\mathbb{R}^{3}}\frac{|u(y)|^{p}}{|x-y|}\mathrm{d}y,\qquad \phi_{u}\in D^{1,2}(\mathbb{R}^{3})\]
such that 
\[-\nabla\phi_{u}=|u|^{p}.\]
We now summarize some properties of $\phi_{u}$, which will be used later. See, for instance, \cite{du2022quasilinear} for a proof.

\newtheorem{lem}{Lemma}[section]
\begin{lem}\label{lemma2.1}
	Let $u\in W^{1,p}(\mathbb{R}^{3})$, \\
	$(1)\ \phi_{u}\geqslant 0,x\in\mathbb{R}^{3};$\\
	$(2)$ for any $t\in \mathbb{R}^{+},\phi_{tu}=t^{p}\phi_{u}$, and $\phi_{u_{t}}t^{kp-2}\phi_{u}(tx)$ with $u_{t}(x)=t^{k}u(tx);$\\
	$(3)\;\|\phi_{u}\|_{D^{1,2}}\leqslant C\|u\|_{E}^{p}$, with $C$ independent of $u;$\\
	$(4)$\;if $u_{n}\rightharpoonup u$ in $W^{1,p}(\mathbb{R}^{3})$, then $\phi_{u_{n}}\rightharpoonup \phi_{u}$ in $D^{1,2}(\mathbb{R}^{3})$, and
	\[\int_{\mathbb{R}^{3}}\phi_{u_{n}}|u_{n}|^{p-2}u_{n}\varphi \mathrm{d}x\to \int_{\mathbb{R}^{3}}\phi_{u}|u|^{p-2}u\varphi \mathrm{d}x,\qquad \forall\;\varphi\in W^{1,p}(\mathbb{R}^{3}).\]
\end{lem}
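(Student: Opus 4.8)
The plan is to establish the four assertions in turn, starting from the explicit Newtonian representation $\phi_u(x)=\frac{1}{4\pi}\int_{\mathbb{R}^3}\frac{|u(y)|^p}{|x-y|}\,dy$. Assertion (1) is immediate: the kernel $\frac{1}{4\pi|x-y|}$ and the density $|u(y)|^p$ are both nonnegative, so the integral is $\geq 0$ pointwise. For the scaling identities in (2) I would substitute directly. Replacing $u$ by $tu$ pulls out $|t|^p=t^p$ from $|tu(y)|^p$, giving $\phi_{tu}=t^p\phi_u$. For $u_t(x)=t^k u(tx)$ I would change variables $z=ty$ inside the integral: the Jacobian contributes $t^{-3}$, the factor $|u_t(y)|^p=t^{kp}|u(ty)|^p$ contributes $t^{kp}$, and rewriting $|x-z/t|=t^{-1}|tx-z|$ contributes one more power of $t$, so that altogether $\phi_{u_t}(x)=t^{kp-2}\phi_u(tx)$.

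For (3) I would use the weak formulation rather than the kernel. Since $-\Delta\phi_u=|u|^p$ with $\phi_u\in D^{1,2}(\mathbb{R}^3)$, testing the equation against $\phi_u$ itself yields $\|\phi_u\|_{D^{1,2}}^2=\int_{\mathbb{R}^3}|u|^p\phi_u\,dx$. Applying H\"older's inequality with the conjugate pair $(\tfrac{6}{5},6)$ gives $\int_{\mathbb{R}^3}|u|^p\phi_u\,dx\le \|u\|_{6p/5}^p\,\|\phi_u\|_6$, and the Sobolev embedding $D^{1,2}(\mathbb{R}^3)\hookrightarrow L^6(\mathbb{R}^3)$ bounds $\|\phi_u\|_6\le C\|\phi_u\|_{D^{1,2}}$. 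Cancelling one power of $\|\phi_u\|_{D^{1,2}}$ leaves $\|\phi_u\|_{D^{1,2}}\le C\|u\|_{6p/5}^p$. Because $1<p<3$ one checks that $p\le \tfrac{6p}{5}\le p^*$, so the embedding $E\hookrightarrow L^{6p/5}(\mathbb{R}^3)$ is continuous and $\|u\|_{6p/5}\le C\|u\|_E$, which gives the claimed estimate $\|\phi_u\|_{D^{1,2}}\le C\|u\|_E^p$ with $C$ independent of $u$.

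The substance of the lemma is (4). Given $u_n\rightharpoonup u$ in $W^{1,p}(\mathbb{R}^3)$, boundedness together with (3) makes $\{\phi_{u_n}\}$ bounded in $D^{1,2}(\mathbb{R}^3)$, hence it admits a weak limit $\psi$ along a subsequence. To identify $\psi=\phi_u$ I would pass to the limit in the distributional identity $\int_{\mathbb{R}^3}\nabla\phi_{u_n}\cdot\nabla\varphi=\int_{\mathbb{R}^3}|u_n|^p\varphi$: the left side converges by weak convergence, and the right side converges because $u_n\to u$ in $L^p_{\mathrm{loc}}$ by Rellich, so $-\Delta\psi=|u|^p$ and uniqueness forces $\psi=\phi_u$; since the limit is independent of the subsequence, the whole sequence converges weakly. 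For the integral convergence I would recast it as a weak--strong pairing: $\phi_{u_n}\rightharpoonup\phi_u$ in $L^6(\mathbb{R}^3)$, while $|u_n|^{p-2}u_n\varphi\to |u|^{p-2}u\varphi$ strongly in $L^{6/5}(\mathbb{R}^3)$. The latter follows by choosing H\"older exponents $a,b$ with $\tfrac{1}{a}+\tfrac{1}{b}=\tfrac{5}{6}$ and $(p-1)a,\,b\in[p,p^*)$, so that strong $L^r$-convergence of $u_n$ together with continuity of the Nemytskii map $s\mapsto|s|^{p-2}s$ yields convergence of the product.

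The main obstacle is precisely this last strong convergence: weak convergence in $W^{1,p}(\mathbb{R}^3)$ only gives $u_n\to u$ in $L^r_{\mathrm{loc}}$, not globally, so the tails of the nonlinear factor must be controlled. This is where hypothesis $(V_1)$ enters decisively, upgrading the embedding $E\hookrightarrow L^r(\mathbb{R}^3)$ to a compact one for $p\le r<p^*$ and thereby supplying the global strong convergence needed. If one wishes to avoid invoking the global compact embedding, I would instead split $\mathbb{R}^3$ into a large ball $B_R$ and its complement, apply Rellich on $B_R$, bound the exterior contribution by the uniformly small $L^{6/5}$-tail of $|u_n|^{p-2}u_n\varphi$ against the uniform $L^6$-bound on $\phi_{u_n}$, and then let $R\to\infty$.
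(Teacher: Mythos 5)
Your proof is correct, but there is nothing in the paper to compare it against line by line: the paper does not prove Lemma 2.1 at all, it merely records these properties of $\phi_u$ and refers to \cite{du2022quasilinear} for a proof. Your write-up supplies the standard self-contained argument one would find there: (1) and (2) follow from the Newtonian representation (your change of variables $z=ty$ with the three factors $t^{kp}$, $t^{-3}$, $t$ correctly yields $\phi_{u_t}(x)=t^{kp-2}\phi_u(tx)$, and in fact fixes an obvious typo in the paper's statement, where the equality sign is missing); (3) follows by testing $-\Delta\phi_u=|u|^p$ against $\phi_u$, H\"older with the pair $(6/5,6)$, and $D^{1,2}(\mathbb{R}^3)\hookrightarrow L^6(\mathbb{R}^3)$, and your exponent check $p\leqslant 6p/5<p^*$ for $1<p<3$ is right; (4) is the usual weak--strong pairing after identifying the weak limit of $\phi_{u_n}$ through the distributional equation and uniqueness in $D^{1,2}(\mathbb{R}^3)$. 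One point deserves emphasis. As literally stated, the lemma assumes only $u_n\rightharpoonup u$ in $W^{1,p}(\mathbb{R}^3)$, and for such sequences the compact embedding $E\hookrightarrow L^r(\mathbb{R}^3)$ furnished by $(V_0)$--$(V_1)$ is not available, so your remark that $(V_1)$ ``enters decisively'' proves a variant of (4) for $E$-weak convergence --- which is, to be fair, the form actually invoked later in the paper (in the compactness of $A$ in Lemma 3.2, the (PS) argument of Lemma 3.4, and Step 2 of the proof of Theorem 1.2). For the statement exactly as written, your fallback argument is the one that is needed: Rellich--Kondrachov on a large ball $B_R$ gives $|u_n|^{p-2}u_n\varphi\to|u|^{p-2}u\varphi$ in $L^{6/5}(B_R)$, while the exterior contribution is bounded by $\|\phi_{u_n}\|_{6}\,\||u_n|^{p-1}\|_{a}\,\|\varphi\|_{L^{b}(|x|>R)}$ with $\tfrac1a+\tfrac1b=\tfrac56$ and $(p-1)a,\,b\in[p,p^*)$ (such exponents exist precisely because $p/p^*=(3-p)/3<5/6$ for $p>1/2$), and this tail is uniformly small as $R\to\infty$. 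So both variants in your proposal are sound, and the splitting argument is the sharper of the two since it proves the lemma under the weaker hypothesis and without any assumption on $V$.
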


\newtheorem{rem}{Remark}[section]
\begin{rem}
	\label{remark2.1} $D^{1,2}(\mathbb{R}^{3})\hookrightarrow L^{6}(\mathbb{R}^{3})$.
\end{rem}
\begin{rem}
	\label{remark2.2} By $(V_{0})-(V_{1})$, the embedding $E\hookrightarrow L^{q}(\mathbb{R}^{3}), q\in[\,p,p^{*})$ is compact. Similar to \cite{bartsch1995existence}.
\end{rem}

Referring \cite{liu2016infinitely}, we define
\[D\bigl(f,g\bigr)=\int_{\mathbb{R}^{3}}\int_{\mathbb{R}^{3}}\frac{f(x)g(y)}{4\pi|x-y|}\mathrm{d}x\mathrm{d}y.\]  Moreover, we have the following
property.
\begin{rem}\label{remark2.3}
	$|D\bigl(f,g\bigr)|^{2}\leqslant D\bigl(f,f\bigr)D\bigl(g,g\bigr)$, see \cite[p.250]{lieb2001analysis}.
\end{rem}

Substituting $\phi=\phi_{u}$ into system \eqref{1.1}, we can rewrite system \eqref{1.1} as the single equation
\begin{equation}
	-\Delta_{p}u+V(x)|u|^{p-2}u+\phi_{u}|u|^{p-2}u=f(u),\qquad u\in E.\label{2.1}\tag{2.1}
\end{equation}
We define the energy functional $I$ on $E$ by
\[I(u)=\frac{1}{p}\int_{\mathbb{R}^{3}}\bigl(|\nabla u|^{p}+V(x)|u|^{p}\bigr)\mathrm{d}x+\frac{1}{2p}\int_{\mathbb{R}^{3}}\phi_{u}|u|^{p}\mathrm{d}x-\int_{\mathbb{R}^{3}}F(u)\mathrm{d}x.\]
It is standard to show that $I\in C^{1}(E,\mathbb{R})$ and
\[\langle I'(u),\varphi\rangle=\int_{\mathbb{R}^{3}}\bigl(|\nabla u|^{p-2}\nabla u\nabla \varphi+V(x)|u|^{p-2}u\varphi\bigr)\mathrm{d}x+\int_{\mathbb{R}^{3}}\phi_{u}|u|^{p-2}u\varphi \mathrm{d}x-\int_{\mathbb{R}^{3}}f(u)\varphi \mathrm{d}x.\]
It is easy to verify that $(u,\phi_{u})\in E\times D^{1,2}(\mathbb{R}^{3})$ is a solution of \eqref{1.1} if and only if $u\in E$ is a critical point of $I$.

\section{Proof of Theorem 1.1}

In this section, we prove the existence of sign-changing solutions to system \eqref{1.1} in the case $\mu>2p$, working with \eqref{2.1}.

\subsection{Properties of operator $A$}

We introduce an auxiliary operator $A$, which will be used to construct the descending flow for the functional $I$. Precisely, the operator $A$ is defined as follows: $A:E\to E$, 
\[A(u)=\bigl(-\Delta_{p}+V(x)h_{p}(\cdot)+\phi_{u}h_{p}(\cdot)\bigr)^{-1}f(u),\qquad u\in E,\]
where, $h_{p}(t)=|t|^{p-2}t$. For any $u\in E, v=A(u)\in E$ is the unique solution to the equation
\begin{equation}
	-\Delta_{p}v+V(x)|v|^{p-2}v+\phi_{u}|v|^{p-2}v=f(u),\qquad v\in E.\label{3.1}\tag{3.1}
\end{equation}
Clearly, the three statements are equivalent: $u$ is a solution of \eqref{2.1}, $u$ is a critical point of $I$, and $u$ is a fixed point of $A$. In order to set up our variational framework, we need the following result from \cite{bartsch2004superlinear}. 

\begin{lem}\label{lemma3.1}
	There exist positive constants $d_{i},i=1,2,3,4$, such that for all $x,y\in\mathbb{R}^{N}$,
	\begin{gather}
		||x|^{p-2}x-|y|^{p-2}y|\leqslant d_{1}\bigl(|x|+|y|\bigr)^{p-2}|x-y|,\notag\\
		\bigl(|x|^{p-2}x-|y|^{p-2}y\bigr)(x-y)\geqslant d_{2}\bigl(|x|+|y|\bigr)^{p-2}|x-y|^{2},\notag\\
		||x|^{p-2}x-|y|^{p-2}y|\leqslant d_{3}|x-y|^{p-1}\qquad \text{if}\;1<p\leqslant2,\notag\\
		\bigl(|x|^{p-2}x-|y|^{p-2}y\bigr)(x-y)\geqslant d_{4}|x-y|^{p}\qquad \text{if}\;p\geqslant2.\notag
	\end{gather}
\end{lem}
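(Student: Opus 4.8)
The plan is to derive all four inequalities from a single device. Set $\phi(z):=|z|^{p-2}z$ on $\mathbb{R}^{N}$, which is the gradient of the convex potential $\frac{1}{p}|z|^{p}$ and has Jacobian
\[
D\phi(z)=|z|^{p-2}\Bigl(I+(p-2)\frac{z\otimes z}{|z|^{2}}\Bigr),\qquad z\neq 0 .
\]
This matrix has eigenvalue $(p-1)|z|^{p-2}$ along $z$ and eigenvalue $|z|^{p-2}$ on $z^{\perp}$, so for every $\xi\in\mathbb{R}^{N}$
\[
\min\{1,p-1\}\,|z|^{p-2}|\xi|^{2}\le\bigl(D\phi(z)\xi\bigr)\cdot\xi,\qquad |D\phi(z)\xi|\le\max\{1,p-1\}\,|z|^{p-2}|\xi| .
\]
Writing $\gamma(t)=y+t(x-y)$ and using $\phi(x)-\phi(y)=\int_{0}^{1}D\phi(\gamma(t))(x-y)\,dt$, the pairing with $x-y$ and the triangle inequality reduce every statement to controlling the scalar integral $J:=\int_{0}^{1}|\gamma(t)|^{p-2}\,dt$: the upper eigenvalue bound gives $|\phi(x)-\phi(y)|\le\max\{1,p-1\}\,|x-y|\,J$, and the lower one gives $(\phi(x)-\phi(y))\cdot(x-y)\ge\min\{1,p-1\}\,|x-y|^{2}\,J$.

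I would first observe that only inequalities (1) and (2) require real work, since (3) and (4) are their consequences. From $|\gamma(t)|\le(1-t)|y|+t|x|\le|x|+|y|$ and the sign of $p-2$: when $p\ge2$ we get $J\le(|x|+|y|)^{p-2}$, which yields (1); when $1<p\le2$ the same bound reverses to $J\ge(|x|+|y|)^{p-2}$, which yields (2). Granting the two remaining wrong-sign estimates $J\le C(|x|+|y|)^{p-2}$ for $1<p<2$ and $J\ge c(|x|+|y|)^{p-2}$ for $p\ge2$ (treated below), inequalities (1) and (2) hold for all $p>1$. Then, using $|x-y|\le|x|+|y|$ together with the sign of $p-2$, inequality (1) forces $(|x|+|y|)^{p-2}\le|x-y|^{p-2}$ when $1<p\le2$ and hence delivers (3) with $d_{3}=d_{1}$, while inequality (2) forces $(|x|+|y|)^{p-2}\ge|x-y|^{p-2}$ when $p\ge2$ and hence delivers (4) with $d_{4}=d_{2}$.

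This leaves the two wrong-sign estimates, and I expect the genuinely delicate one to be the upper bound $J\le C(|x|+|y|)^{p-2}$ for $1<p<2$, where $p-2\in(-1,0)$ and $|\gamma(t)|^{p-2}$ is singular along any segment meeting the origin; the point is that this singularity is still integrable and carries the correct homogeneity. By the scaling $x\mapsto\lambda x$, $y\mapsto\lambda y$, under which both $J$ and $(|x|+|y|)^{p-2}$ are homogeneous of degree $p-2$, I would normalize $|x|+|y|=1$ and then analyse $J$ through the distance of the segment $\gamma$ to the origin, the integral being largest when the segment passes through $0$. The extremal configuration is the antipodal collinear one $y=-x$, where $\gamma(t)=(2t-1)x$ and $J=|x|^{p-2}\int_{0}^{1}|2t-1|^{p-2}\,dt=\frac{1}{p-1}|x|^{p-2}$, finite precisely because $p>1$; this pins down $C$. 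The companion lower bound $J\ge c(|x|+|y|)^{p-2}$ for $p\ge2$ is dual: now $|\gamma|^{p-2}$ is smallest near the origin, so $J$ is minimized on the same collinear antipodal line, where the explicit one-variable integral stays bounded below. The main obstacle is thus the uniform reduction of these two extremal problems to the collinear case, i.e. showing that among all $x,y$ with $|x|+|y|$ fixed the ratio $J/(|x|+|y|)^{p-2}$ is maximized, respectively minimized, by antipodal points of equal length; once this monotonicity in the segment-to-origin distance is established, each inequality follows from the elementary integral above.
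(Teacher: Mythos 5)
The paper offers no proof of this lemma at all: it is imported verbatim from \cite{bartsch2004superlinear}, so there is no in-paper argument to match yours against, and I assess the proposal on its own terms. Its architecture is correct. The Jacobian $D\phi(z)$ is symmetric with eigenvalues $(p-1)|z|^{p-2}$ (along $z$) and $|z|^{p-2}$ (on $z^{\perp}$), so your two eigenvalue bounds with constants $\min\{1,p-1\}$ and $\max\{1,p-1\}$ are exactly right; the fundamental-theorem representation along $\gamma(t)=y+t(x-y)$ is legitimate even when the segment crosses the origin, since $|\gamma(t)|\geqslant|x-y|\,|t-t_{0}|$ (where $t_{0}$ minimizes $|\gamma|^{2}$) makes the singular bound $|t-t_{0}|^{p-2}$ integrable for $p>1$, so $\phi\circ\gamma$ is absolutely continuous; the easy-sign bounds $J\leqslant(|x|+|y|)^{p-2}$ for $p\geqslant2$ and $J\geqslant(|x|+|y|)^{p-2}$ for $1<p\leqslant2$ are immediate; and your derivation of the third and fourth inequalities from the first two, using $|x-y|\leqslant|x|+|y|$ and the sign of $p-2$, is clean, with $d_{3}=d_{1}$ and $d_{4}=d_{2}$.

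The one step you leave open --- reducing the two wrong-sign bounds on $J$ to the antipodal collinear configuration --- is a genuine gap as written, since the claimed monotonicity is asserted rather than proved; but the claim is true and closes in two lines, so the plan survives intact. Fix $|x|=a$, $|y|=b$ and expand $|\gamma(t)|^{2}=(1-t)^{2}b^{2}+t^{2}a^{2}+2t(1-t)\,x\cdot y$: for each fixed $t\in(0,1)$ this is increasing in $x\cdot y$, so the integrand $|\gamma(t)|^{p-2}$ is pointwise monotone in the angle, and $J$ is maximized ($1<p<2$) respectively minimized ($p\geqslant2$) at $x\cdot y=-ab$. In that collinear antipodal case a direct computation gives $J=\frac{a^{p-1}+b^{p-1}}{(p-1)(a+b)}$ (consistent with your value $\frac{1}{p-1}|x|^{p-2}$ when $a=b$), and concavity respectively convexity of $s\mapsto s^{p-1}$ yields $a^{p-1}+b^{p-1}\leqslant 2^{2-p}(a+b)^{p-1}$ for $1<p<2$ and the reverse for $p\geqslant2$, hence $J\leqslant\frac{2^{2-p}}{p-1}(a+b)^{p-2}$ and $J\geqslant\frac{2^{2-p}}{p-1}(a+b)^{p-2}$ in the two cases --- precisely the estimates your scheme requires. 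If you prefer to avoid extremal analysis entirely, a cruder split also works: for $1<p<2$, the bound $J\leqslant\frac{2}{p-1}|x-y|^{p-2}$ (from $|\gamma(t)|\geqslant|x-y|\,|t-t_{0}|$) suffices when $|x-y|\geqslant\frac{1}{6}(|x|+|y|)$, while otherwise the whole segment stays at distance at least $\frac{1}{3}(|x|+|y|)$ from the origin; for $p\geqslant2$, taking $|x|\geqslant\frac{1}{2}(|x|+|y|)$ one has $|\gamma(t)|\geqslant\frac{1}{4}(|x|+|y|)$ for $t\in\left[\frac{3}{4},1\right]$, giving $J\geqslant4^{1-p}(|x|+|y|)^{p-2}$. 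With either insertion your proof is complete.
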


Moreover, this operator $A$ is merely continuous, as stated in the next lemma.
\begin{lem}\label{lemma3.2}
	The operator $A$ is well defined and is continuous and compact.
\end{lem}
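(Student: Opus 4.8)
Let me think about how to prove that $A$ is well-defined, continuous, and compact.

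First, well-definedness. For each $u \in E$, I need to show equation (3.1) has a unique solution $v = A(u)$. This is a monotone operator problem. The left side of (3.1), viewed as an operator in $v$, is $-\Delta_p v + V(x)|v|^{p-2}v + \phi_u |v|^{p-2}v$. I'd work variationally: $v$ is the unique minimizer of the functional
$$J_u(v) = \frac{1}{p}\int (|\nabla v|^p + V(x)|v|^p + \phi_u|v|^p)\,dx - \int f(u)v\,dx.$$
Since $\phi_u \geq 0$ (Lemma 2.1(1)), the quadratic-type part is coercive and strictly convex (strict convexity via the inequalities in Lemma 3.1, specifically the monotonicity inequality), while $\int f(u)v\,dx$ is a bounded linear functional of $v$ by $(f_1)$–$(f_2)$ and the compact embedding (Remark 2.2). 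Coercivity plus weak lower semicontinuity give existence; strict convexity gives uniqueness. Equivalently, I can invoke that the operator $B_u(v) := -\Delta_p v + V(x)|v|^{p-2}v + \phi_u|v|^{p-2}v$ is strictly monotone, continuous, and coercive, so it's a bijection $E \to E^*$ by Minty–Browder, and $f(u) \in E^*$ by $(f_2)$ and Sobolev embedding.

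Next, continuity and compactness together. I'd take $u_n \to u$ in $E$ and set $v_n = A(u_n)$, $v = A(u)$. First establish that $\{v_n\}$ is bounded: test (3.1) for $v_n$ against $v_n$ itself, use $\phi_{u_n} \geq 0$ to drop that term, and bound the right side using $|\langle f(u_n), v_n\rangle| \leq \|f(u_n)\|_{E^*}\|v_n\|_E$, where $\|f(u_n)\|_{E^*}$ stays bounded since $u_n$ converges. This gives $\|v_n\|_E^p \leq C\|v_n\|_E$, hence boundedness. Then extract a weakly convergent subsequence $v_n \rightharpoonup \bar v$ in $E$, with $v_n \to \bar v$ strongly in $L^q$ ($q \in [p,p^*)$) by Remark 2.2. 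The key step is to pass to the limit in the equation and then upgrade weak to strong convergence. Testing the difference of the equations for $v_n$ and $\bar v$ against $v_n - \bar v$, and using the monotonicity inequality from Lemma 3.1 on the $-\Delta_p$ term, I expect
$$d_2\int (|\nabla v_n| + |\nabla \bar v|)^{p-2}|\nabla v_n - \nabla \bar v|^2\,dx \leq o(1),$$
with the lower-order terms ($V$-term, $\phi$-term, and right-hand side) vanishing thanks to strong $L^q$ convergence, $\phi_{u_n} \to \phi_u$ controlled via Lemma 2.1(3)–(4) and Remark 2.3, and $f(u_n) \to f(u)$ in $E^*$. This forces $v_n \to \bar v$ strongly in $E$; uniqueness identifies $\bar v = v = A(u)$, and since every subsequence has a further subsequence converging strongly to the same limit, the full sequence converges. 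This simultaneously yields continuity ($A(u_n) \to A(u)$) and compactness (a bounded sequence $\{u_n\}$ produces, after passing to a subsequence where $u_n$ converges weakly and strongly in $L^q$, a strongly convergent image).

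The main obstacle I anticipate is the passage to the limit for the nonlinear $p$-Laplacian term, since weak convergence of $v_n$ does not by itself control $|\nabla v_n|^{p-2}\nabla v_n$. The clean way around this is the monotonicity inequality in Lemma 3.1 (the $d_2$ bound), which lets me convert weak convergence plus convergence of the relevant pairings into a genuine strong-convergence estimate on the gradients; for $1 < p \leq 2$ I would pair this with the $d_1$ or $d_3$ inequalities and Hölder to close the argument, and for $p \geq 2$ the $d_4$ inequality gives the estimate directly. A secondary technical point is controlling the coupling term: I must verify $\int \phi_{u_n}|v_n|^{p-2}v_n (v_n - \bar v)\,dx \to 0$, for which I'd combine the uniform bound $\|\phi_{u_n}\|_{D^{1,2}} \leq C\|u_n\|_E^p$ from Lemma 2.1(3), the convergence in Lemma 2.1(4), Hölder with the embedding $D^{1,2} \hookrightarrow L^6$ (Remark 2.1), and the strong $L^q$ convergence of $v_n$.
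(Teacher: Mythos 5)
Your proposal is correct and follows essentially the same route as the paper: well-definedness by minimizing the coercive, weakly lower semicontinuous, strictly convex functional $J_u$ (the paper verifies the convexity of the coupled $\phi$-term via the bilinear form $D(\cdot,\cdot)$, Remark 2.3 and Young's inequality), and continuity/compactness via the monotonicity inequalities of Lemma 3.1 combined with H\"older interpolation, the $\phi_u$-estimates of Lemma 2.1 together with $D^{1,2}(\mathbb{R}^{3})\hookrightarrow L^{6}(\mathbb{R}^{3})$, and the compact embedding of Remark 2.2. The only cosmetic difference is that the paper establishes continuity through the explicit quantitative bound $\|v_n-v\|_{E}\leqslant C\|u_n-u\|_{E}^{q-1}\bigl(\|v_n\|_{E}+\|v\|_{E}\bigr)^{2-p}$ rather than your subsequence argument, and identifies the limit in the compactness step by passing to the limit in the weak formulation exactly as you describe.
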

\begin{proof}
	Let $u\in E$ and define
	\[J(w)=\frac{1}{p}\int_{\mathbb{R}^{3}}\bigl(|\nabla w|^{p}+V(x)|w|^{p}\bigr)+\frac{1}{p}\int_{\mathbb{R}^{3}}\phi_{u}|w|^{p}-\int_{\mathbb{R}^{3}}f(u)w,\qquad w\in E.\]
	Then $J(w)\in C^{1}(E,\mathbb{R})$. By $(f_{1})-(f_{2})$ and Remark \ref{remark2.2}, $J(w)$ is coercive, bounded below, weakly lower semicontinuous. We will show that $J(w)$ is strictly convex. For all $\varphi\in E$,
	\[J'(w)\varphi=\int_{\Omega_{3}}|\nabla w|^{p-2}\nabla w\nabla\varphi+\int_{\Omega_{3}}V(x)|w|^{p-2}w\varphi+\int_{\Omega_{3}}\phi_{u}|w|^{p-2}w\varphi-\int_{\Omega_{3}}f(u)\varphi.\]
	We obtain
	\begin{align}
		\langle J'(u_{1})-J'(u_{2}),u_{1}-u_{2}\rangle=&\int_{\Omega_{3}}\bigl(|\nabla u_{1}|^{p-2}\nabla u_{1}-|\nabla u_{2}|^{p-2}\nabla u_{2}\bigr)(\nabla u_{1}-\nabla u_{2})\notag\\
		&+\int_{\Omega_{3}}V(x)\bigl(|u_{1}|^{p-2}u_{1}-|u_{2}|^{p-2}u_{2}\bigr)(u_{1}-u_{2})\notag\\
		&+\int_{\Omega_{3}}\bigl(\phi_{u_{1}}|u_{1}|^{p-2}u_{1}-\phi_{u_{2}}|u_{2}|^{p-2}u_{2}\bigr)(u_{1}-u_{2}).\notag
	\end{align}
	If $u_{1}(x)\equiv u_{2}(x)\equiv0$, the proof is completed. We denote $\Omega_{1}:=\{x\in\mathbb{R}^{3}\bigl||\nabla u_{1}|+\nabla u_{2}|>0\},\Omega_{2}:=\{x\in\mathbb{R}^{3}\bigl||u_{1}|+u_{2}|>0\}$. By Lemma \ref{lemma3.1}, we have
	\begin{align}
		\langle J'(u_{1})-J'(u_{2}),u_{1}-u_{2}\rangle\geqslant&C\biggl(\int_{\Omega_{1}}\frac{|\nabla u_{1}-\nabla u_{2}|^{2}}{\bigl(|\nabla u_{1}|+|\nabla u_{2}|\bigr)^{2-p}}+\int_{\Omega_{2}}V(x)\frac{|u_{1}-u_{2}|^{2}}{\bigl(|u_{1}|+|u_{2}|\bigr)^{2-p}}\notag\\
		&+\int_{\Omega_{3}}\bigl(\phi_{u_{1}}|u_{1}|^{p-2}u_{1}-\phi_{u_{2}}|u_{2}|^{p-2}u_{2}\bigr)(u_{1}-u_{2})\biggr).\notag
	\end{align}
	Let $I_{1}=\int_{\Omega_{3}}\bigl(\phi_{u_{1}}|u_{1}|^{p-2}u_{1}-\phi_{u_{2}}|u_{2}|^{p-2}u_{2}\bigr)(u_{1}-u_{2})$. We get
	\[4\pi I_{1}
	\geqslant D\bigl(|u_{1}|^{p},|u_{1}|^{p}\bigr)+D\bigl(|u_{2}|^{p},|u_{2}|^{p}\bigr)-D\bigl(|u_{1}|^{p},|u_{1}|^{p-1}|u_{2}|\bigr)-D\bigl(|u_{2}|^{p},|u_{2}|^{p-1}|u_{1}|\bigr).\]
	Furthermore, by Remark \ref{remark2.3} and Young's inequality, we obtain
	\begin{align}
		D\bigl(|u_{1}|^{p},|u_{1}|^{p-1}|u_{2}|\bigr)&\leqslant D\biggl(|u_{1}|^{p},\frac{|u_{1}|^{p}}{\frac{p}{p-1}}+\frac{|u_{2}|^{p}}{p}\biggr)\notag\\
		&=\frac{p-1}{p}D\bigl(|u_{1}|^{p},|u_{1}|^{p}\bigr)+\frac{1}{p}D\bigl(|u_{1}|^{p},|u_{2}|^{p}\bigr).\notag
	\end{align}
	Similarly, we get
	\[D\bigl(|u_{2}|^{p},|u_{2}|^{p-1}|u_{1}|\bigr)\leqslant\frac{p-1}{p}D\bigl(|u_{2}|^{p},|u_{2}|^{p}\bigr)+\frac{1}{p}D\bigl(|u_{2}|^{p},|u_{1}|^{p}\bigr).\]
	Thus, by Remark \ref{remark2.3}, we get
	\begin{align}
		4\pi I_{1}=&\frac{1}{p}\biggl(D\bigl(|u_{1}|^{p},|u_{1}|^{p}\bigr)+D\bigl(|u_{2}|^{p},|u_{2}|^{p}\bigr)-2D\bigl(|u_{1}|^{p},|u_{2}|^{p}\bigr)\biggr)\notag\\
		\geqslant&\frac{1}{p}\biggl(D\bigl(|u_{1}|^{p},|u_{1}|^{p}\bigr)+D\bigl(|u_{2}|^{p},|u_{2}|^{p}\bigr)-2\sqrt{D\bigl(|u_{1}|^{p},|u_{1}|^{p}\bigr)D\bigl(|u_{2}|^{p},|u_{2}|^{p}\bigr)}\biggr)\notag\\
		\geqslant&\frac{1}{p}\biggl(\sqrt{D\bigl(|u_{1}|^{p},|u_{1}|^{p}\bigr)}-\sqrt{D\bigl(|u_{2}|^{p},|u_{2}|^{p}\bigr)}\biggr)^{2}\notag\\
		\geqslant&0.\notag
	\end{align}
	Therefore, 
	\[\langle J'(u_{1})-J'(u_{2}),u_{1}-u_{2}\rangle\geqslant0.\]
	We know that $\langle J'(u_{1})-J'(u_{2}),u_{1}-u_{2}\rangle>0$ if $u_{1}\neq u_{2}$. Then $J(w)$ is strictly convex. Thus, $J(w)$ admits a unique minimizer $v=A(u)\in E$, which is the unique solution to \eqref{3.1}. Moreover, $A$ maps bounded sets into bounded sets.
	
	In order to prove that $A$ is continuous, we assume that ${u_{n}}\subset E$ with $u_{n}\to u$ in $E$ as $n\to\infty$. For the sake of brevity, we denote $A(u_{n})=v_{n}$ and $A(u)=v$, then
	\[-\Delta_{p}v_{n}+V(x)|v_{n}|^{p-2}v_{n}+\phi_{u_{n}}|v_{n}|^{p-2}v_{n}=f(u_{n}),\label{3.2}\tag{3.2}\]
	and
	\[-\Delta_{p}v+V(x)|v|^{p-2}v+\phi_{u}|v|^{p-2}v=f(u).\label{3.3}\tag{3.3}\]
	We need to show $\|v_{n}-v\|_{E}\to0$ as $n\to\infty$. If $v\equiv0$, by \eqref{3.3} and $(f_{3})$, we know that $u\equiv0$, furthermore, we have $u_{n}\to0$ and $v_{n}\to v\equiv0$. If $v\not\equiv0$, we denote  $\Omega_{3}:=\bigl\{x\in\mathbb{R}^{3}\big||\nabla v_{n}|+|\nabla v|>0\bigr\},\Omega_{4}:=\bigl\{x\in\mathbb{R}^{3}\big||v_{n}|+|v|>0\bigr\}$. Note that
	\begin{align}
		\|v_{n}-v\|_{E}^{p}=&\int_{\mathbb{R}^{3}}\bigl(|\nabla(v_{n}-v)|^{p}+V(x)|v_{n}-v|^{p}\bigr)\notag\\
		=&\int_{\Omega_{3}}\frac{|\nabla v_{n}-\nabla v|^{p}}{(\bigl|\nabla v_{n}|+|\nabla v|\bigr)^{\frac{p(2-p)}{2}}}\bigl(|\nabla v_{n}|+|\nabla v|\bigr)^{\frac{p(2-p)}{2}}\notag\\
		&+\int_{\Omega_{4}}V(x)\frac{|v_{n}- v|^{p}}{\bigl(|v_{n}|+|v|\bigr)^{\frac{p(2-p)}{2}}} \bigl(|v_{n}|+|v|\bigr)^{\frac{p(2-p)}{2}}.\label{3.4}\tag{3.4}
	\end{align}
	By H\"older's inequality, we have
	\begin{align}
		\|v_{n}-v\|_{E}^{p}\leqslant&\biggl(\int_{\Omega_{3}}\frac{|\nabla v_{n}-\nabla v|^{2}}{\bigl(|\nabla v_{n}|+|\nabla v|\bigr)^{2-p}}\biggr)^{\frac{p}{2}}\biggl(\int_{\Omega_{3}}\bigl(|\nabla v_{n}|+|\nabla v|\bigr)^{p}\biggr)^{\frac{2-p}{2}}\notag\\
		&+\biggl(\int_{\Omega_{4}}V(x)\frac{|v_{n}- v|^{2}}{\bigl(|v_{n}|+|v|\bigr)^{2-p}}\biggr)^{\frac{p}{2}} \biggl(\int_{\Omega_{4}}V(x)\bigl(|v_{n}|+|v|\bigr)^{p}\biggr)^{\frac{2-p}{2}}\notag\\
		\leqslant&\biggl(\int_{\Omega_{3}}\frac{|\nabla v_{n}-\nabla v|^{2}}{\bigl(|\nabla v_{n}|+|\nabla v|\bigr)^{2-p}}+\int_{\Omega_{4}}V(x)\frac{|v_{n}- v|^{2}}{\bigl(|v_{n}|+|v|\bigr)^{2-p}}\biggr)^{\frac{p}{2}}\notag\\
		&\biggl(\int_{\mathbb{R}^{3}}\bigl(|\nabla v_{n}|+|\nabla v|\bigr)^{p}+\int_{\mathbb{R}^{3}}V(x)\bigl(|v_{n}|+|v|\bigr)^{p}\biggr)^{\frac{2-p}{2}}.\notag
	\end{align}
	Together with \eqref{3.2}, \eqref{3.3} and Lemma \ref{lemma3.1}, we get
	\begin{align}
		\|v_{n}-v\|_{E}^{p}
		\leqslant&\biggl(\int_{\Omega_{3}}\langle |\nabla v_{n}|^{p-2}v_{n}-|\nabla v|^{p-2}v,\nabla v_{n}-\nabla v\rangle\notag\\
		&+\int_{\Omega_{4}}\langle V(x)|v_{n}|^{p-2}v_{n}-V(x)|v|^{p-2}v,v_{n}-v\rangle\biggr)^{\frac{p}{2}}\notag\\
		&\cdot C\biggl(\int_{\mathbb{R}^{3}}\bigl(|\nabla v_{n}|^{p}+V(x)|v_{n}|^{p}\bigr)+\int_{\mathbb{R}^{3}}\bigl(|\nabla v|^{p}+V(x)|v|^{p}\bigr)\biggr)^{\frac{2-p}{2}}\notag\\
		\leqslant&C\biggl(\int_{\mathbb{R}^{3}}\langle f(u_{n})-f(u),v_{n}-v\rangle+\int_{\mathbb{R}^{3}}\langle \phi_{u}|v|^{p-2}v-\phi_{u_{n}}|v_{n}|^{p-2}v_{n},v_{n}-v\rangle\biggr)^{\frac{p}{2}}\notag\\
		&\bigl(\|v_{n}\|_{E}^{p}+\|v\|_{E}^{p}\bigr)^{\frac{2-p}{2}}.\label{3.5}\tag{3.5}
	\end{align}
	Indeed, it follows from $(f_{1})$ and $(f_{2})$ that for any $\delta>0$, there exists $C_{\delta}>0$ such that
	\[|f(s)|\leqslant\delta|s|^{p-1}+C_{\delta}|s|^{q-1},\qquad s\in\mathbb{R}^{3}.\label{3.6}\tag{3.6}\]
	Combining Remark \ref{remark2.1}, Remark \ref{remark2.2}, H\"older's inequality and Sobolev's inequality, we obtain
	\begin{align}
		&\int_{\mathbb{R}^{3}}\bigl|\phi_{u}|v|^{p-2}v-\phi_{u_{n}}|v_{n}|^{p-2}v_{n}\bigr|\bigl|v_{n}-v\bigr|\notag\\
		\leqslant&\biggl(\int_{\mathbb{R}^{3}}|\phi_{u}|^{6}\biggr)^{\frac{1}{6}}\biggl(\int_{\mathbb{R}^{3}}\bigl||v|^{p-2}v(v_{n}-v)\bigr|^{\frac{6}{5}}\biggr)^{\frac{5}{6}}\notag\\
		&+\biggl(\int_{\mathbb{R}^{3}}|\phi_{u_{n}}|^{6}\biggr)^{\frac{1}{6}}\biggl(\int_{\mathbb{R}^{3}}\bigl||v_{n}|^{p-2}v_{n}(v_{n}-v)\bigr|^{\frac{6}{5}}\biggr)^{\frac{5}{6}}\notag\\
		\leqslant&C\|\phi_{u}\|_{D^{1,2}}\biggl(\int_{\mathbb{R}^{3}}|u|^{p}\biggr)^{\frac{p-1}{p}}\biggl(\int_{\mathbb{R}^{3}}|v_{n}-v|^{\frac{6p}{6-p}}\biggr)^{\frac{6-p}{6p}}\notag\\
		&+C\|\phi_{u_{n}}\|_{D^{1,2}}\biggl(\int_{\mathbb{R}^{3}}|u_{n}|^{p}\biggr)^{\frac{p-1}{p}}\biggl(\int_{\mathbb{R}^{3}}|v_{n}-v|^{\frac{6p}{6-p}}\biggr)^{\frac{6-p}{6p}}\notag\\
		\leqslant&C\bigl(\|u_{n}\|_{E}^{2p-1}+\|u\|_{E}^{2p-1}\bigr)\|v_{n}-v\|_{\frac{6p}{6-p}}.\label{3.7}\tag{3.7}
	\end{align}
	Using $(f_{2})$ and H\"older's inequality, one sees that
	\begin{align}
		\int_{\mathbb{R}^{3}}|f(u_{n})-f(u)||v_{n}-v|
		&\leqslant \int_{\mathbb{R}^{3}}C\bigl||u_{n}|^{q-1}-|u|^{q-1}\bigr||v_{n}-v|\notag\\
		&\leqslant C\biggl(\int_{\mathbb{R}^{3}}|u_{n}-u|^{q}\biggr)^{\frac{q-1}{q}} \biggl(\int_{\mathbb{R}^{3}}|v_{n}-v|^{q}\biggr)^{\frac{1}{q}}\notag\\
		&=C\|u_{n}-u\|_{q}^{q-1}\|v_{n}-v\|_{q}.\label{3.8}\tag{3.8}
	\end{align}
	By \eqref{3.5}, \eqref{3.7} and \eqref{3.8}, we get
	\begin{align}
		\|v_{n}-v\|_{E}^{p}\leqslant
		&C\biggl(\|u_{n}-u\|_{q}^{q-1}\|v_{n}-v\|_{q}+\bigl(\|u_{n}\|_{E}^{2p-1}+\|u\|_{E}^{2p-1}\bigr)\|v_{n}-v\|_{\frac{6p}{6-p}}\biggr)^{\frac{p}{2}}\notag\\
		&\bigl(\|v_{n}\|_{E}+\|v\|_{E}\bigr)^{\frac{(2-p)p}{2}}\notag\\
		\leqslant&C_{1}\|u_{n}-u\|_{E}^{\frac{(q-1)p}{2}}\|v_{n}-v\|_{E}^{\frac{p}{2}} \bigl(\|v_{n}\|_{E}+\|v\|_{E}\bigr)^{\frac{(2-p)p}{2}}.\notag
	\end{align}
	Thus,
	\[\|v_{n}-v\|_{E}\leqslant C_{1}\|u_{n}-u\|_{E}^{q-1}\bigl(\|v_{n}\|_{E}+\|v\|_{E}\bigr)^{2-p}.\]
	Therefore, for $p>1$, $\|v_{n}-v\|_{E}\to 0$ as $\|u_{n}-u\|_{E}\to 0$.
	
	Finally, we show that $A$ is compact. Let $\{u_{n}\}\subset E$  be a bounded sequence. Then $\{v_{n}\}=\{A(u_{n})\}\subset E$ is a bounded sequence. Passing to a subsequence, we may assume that $u_{n}\rightharpoonup u$ and $v_{n}\rightharpoonup v$ in $E$ and strongly convergent in $L^{q}(\mathbb{R}^{3})$ as $n\to\infty$, Consider the identity
	\[\int_{\mathbb{R}^{3}}\biggl(|\nabla v_{n}|^{p-2}\nabla v_{n}\nabla \xi+V(x)|v_{n}|^{p-2}v_{n}\xi+\phi_{u_{n}} |v_{n}|^{p-2}v_{n}\xi\biggr)=\int_{\mathbb{R}^{3}}f(u_{n})\xi,\;\;\xi\in E.\label{3.9}\tag{3.9}\]
	Taking limit as $n\to\infty$ in \eqref{3.9} yields
	\[\int_{\mathbb{R}^{3}}\biggl(|\nabla v|^{p-2}\nabla v\nabla\xi+V(x)|v|^{p-2}v\xi+\phi_{u}|v|^{p-2}v\xi\biggr)=\int_{\mathbb{R}^{3}}f(u)\xi,\qquad\xi\in E.\]
	This means $v=A(u)$ and in the same way as above, for $p>1$, $\|v_{n}-v\|_{E}\to 0$, i.e., $A(u_{n})\to A(u)$ in $E$ as $\|u_{n}-u\|_{E}\to 0$.
\end{proof}

Obviously, if $f$ is odd then $A$ is odd. Then we denote the set of fixed points of $A$ by $K$, which is exactly the set of critical points of $I$. 

\begin{lem}\label{lemma3.3}
	$(1)$ There exist $1<p\leqslant2,a_{1}>0$ and $a_{2}>0$ such that
	\[\langle I'(u),u-A(u)\rangle\geqslant a_{1}\|u-A(u)\|_{E}^{2}(\|u\|_{E}+\|A(u)\|_{E})^{p-2}\]
	and
	\[\|I'(u)\|_{E^{*}}\leqslant a_{2}\|u-A(u)\|_{E}^{p-1}\bigl(1+\|u\|_{E}^{p}\bigr)\]
	hold for every $u\in E_{0}:=E\backslash K$.\\
	$(2)$There exist $p\geqslant2,a_{1}>0$ and $a_{2}>0$ such that
	\[\langle I'(u),u-A(u)\rangle\geqslant a_{1}\|u-A(u)\|_{E}^{p}.\]
	and
	\[\|I'(u)\|_{E^{*}}\leqslant a_{2}\|u-A(u)\|_{E}\bigl(\|u\|_{E}+\|A(u)\|_{E}\bigr)^{p-2}\bigl(1+\|u\|_{E}^{p}\bigr)\]
	hold for every $u\in E$.
\end{lem}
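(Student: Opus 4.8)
The plan is to reduce everything to a single algebraic identity obtained by testing the two equations against a common function, and then to estimate the three resulting terms (the $p$-Laplacian part, the potential part, and the nonlocal part) separately with the pointwise inequalities of Lemma \ref{lemma3.1}. Write $v=A(u)$, so that $v$ solves \eqref{3.1}. Subtracting the weak form of \eqref{3.1} tested against $\varphi$ from the expression for $\langle I'(u),\varphi\rangle$, the two $f(u)$ terms cancel and one is left, for every $\varphi\in E$, with
\[
\langle I'(u),\varphi\rangle=\int_{\mathbb{R}^3}\bigl(|\nabla u|^{p-2}\nabla u-|\nabla v|^{p-2}\nabla v\bigr)\nabla\varphi+\int_{\mathbb{R}^3}V(x)\bigl(|u|^{p-2}u-|v|^{p-2}v\bigr)\varphi+\int_{\mathbb{R}^3}\phi_u\bigl(|u|^{p-2}u-|v|^{p-2}v\bigr)\varphi.
\]
This identity is the backbone of both the lower and the upper estimates.

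For the two lower bounds I would take $\varphi=u-v$. The pleasant observation is that the nonlocal term is then nonnegative pointwise: $\phi_u\geq0$ by Lemma \ref{lemma2.1}(1) and $t\mapsto|t|^{p-2}t$ is monotone, so $(|u|^{p-2}u-|v|^{p-2}v)(u-v)\geq0$; hence this term is simply discarded. When $p\geq2$ the fourth inequality of Lemma \ref{lemma3.1} applied to the remaining two terms gives at once $\langle I'(u),u-v\rangle\geq d_4\|u-v\|_E^p$, which is the first estimate of part (2) with $a_1=d_4$ (valid on all of $E$, both sides vanishing on $K$). When $1<p\leq2$ the second inequality of Lemma \ref{lemma3.1} yields $\langle I'(u),u-v\rangle\geq d_2\,S$, where $S$ is the weighted quantity $\int|\nabla(u-v)|^2(|\nabla u|+|\nabla v|)^{p-2}+\int V(x)|u-v|^2(|u|+|v|)^{p-2}$, integrated over the sets where the denominators do not vanish. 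To turn $S$ into $\|u-v\|_E^2(\|u\|_E+\|v\|_E)^{p-2}$ I would run the H\"older computation of \eqref{3.4} in reverse: exactly as there one obtains $\|u-v\|_E^p\leq S^{p/2}T^{(2-p)/2}$ with $T=\int(|\nabla u|+|\nabla v|)^p+\int V(x)(|u|+|v|)^p\leq 2^{p-1}(\|u\|_E+\|v\|_E)^p$, and solving for $S$ produces the claimed bound. This is precisely where one must work on $E_0$: there $\|u-v\|_E>0$ and $\|u\|_E+\|v\|_E>0$, so the negative exponent $p-2$ causes no trouble.

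For the two upper bounds I would estimate $|\langle I'(u),\varphi\rangle|$ term by term, uniformly over $\|\varphi\|_E=1$. The $p$-Laplacian and potential terms are handled by the first inequality of Lemma \ref{lemma3.1} (for $p\geq2$) or the third (for $p\leq2$) followed by H\"older, splitting the weight as $V=V^{(p-2)/p}\cdot V^{1/p}\cdot V^{1/p}$ in the former case and $V=V^{(p-1)/p}\cdot V^{1/p}$ in the latter; these produce exactly $\|u-v\|_E^{p-1}$ (for $p\leq2$) and $\|u-v\|_E(\|u\|_E+\|v\|_E)^{p-2}$ (for $p\geq2$), times $\|\varphi\|_E$.

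The hard part, and the only genuinely new difficulty relative to \cite{liu2016infinitely}, is the nonlocal term $\int\phi_u(|u|^{p-2}u-|v|^{p-2}v)\varphi$. Here I would first peel off $\phi_u\in L^6$ using $\|\phi_u\|_6\leq C\|\phi_u\|_{D^{1,2}}\leq C\|u\|_E^p$ (Remark \ref{remark2.1} and Lemma \ref{lemma2.1}(3)), which is what supplies the factor $1+\|u\|_E^p$, and then control the remaining $L^{6/5}$ factor by the compact embeddings $E\hookrightarrow L^s$, $s\in[p,p^*)$ (Remark \ref{remark2.2}), exactly in the spirit of \eqref{3.7}. For $p\leq2$, the third inequality of Lemma \ref{lemma3.1} gives $\int\phi_u|u-v|^{p-1}|\varphi|\leq C\|u\|_E^p\|u-v\|_{p}^{p-1}\|\varphi\|_{6p/(6-p)}\leq C\|u\|_E^p\|u-v\|_E^{p-1}\|\varphi\|_E$. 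For $p\geq2$, the first inequality of Lemma \ref{lemma3.1} inserts an extra factor $(|u|+|v|)^{p-2}$, and I would apply a three-term H\"older inequality; the only subtlety is to exhibit exponents $r_1,r_2,r_3$ with $(p-2)r_1,\,r_2,\,r_3\in[p,p^*)$ and $1/r_1+1/r_2+1/r_3=5/6$, whose feasibility for $2\leq p<3$ follows because the attainable values of $1/r_1+1/r_2+1/r_3$ range over an interval with upper endpoint $1$ and lower endpoint $\tfrac{3-p}{3}<\tfrac56$. Adding the three terms then gives $\|I'(u)\|_{E^*}\leq a_2\|u-v\|_E^{p-1}(1+\|u\|_E^p)$ for $p\leq2$ and $\|I'(u)\|_{E^*}\leq a_2\|u-v\|_E(\|u\|_E+\|v\|_E)^{p-2}(1+\|u\|_E^p)$ for $p\geq2$, completing the proof.
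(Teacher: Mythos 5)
Your proposal is correct and follows essentially the same route as the paper's proof: the same identity obtained by subtracting the weak form of \eqref{3.1} from $\langle I'(u),\varphi\rangle$, the same reversal of the H\"older computation \eqref{3.4} to get the $1<p\leqslant2$ lower bound after discarding the nonnegative nonlocal term, and the same $\|\phi_u\|_{6}\leqslant C\|u\|_{E}^{p}$ plus H\"older scheme for the nonlocal part of the upper bounds (your abstract feasibility argument for the three exponents in the $p\geqslant2$ case corresponds to the paper's explicit choice $6p(p-2)/(5p-6)$, $p$, $6p/(6-p)$ in \eqref{3.15}). If anything, your exponent bookkeeping in the $1<p\leqslant2$ nonlocal estimate, with $\|\varphi\|_{6p/(6-p)}$, is the balanced version of the paper's \eqref{3.14}.
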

\begin{proof}
	Since $A(u)$ is the solution of equation (\ref{3.1}), we see that
	\begin{align}
		&\;\langle I'(u),u-A(u)\rangle\notag\\
		=&\int_{\mathbb{R}^{3}}\biggl(|\nabla u|^{p-2}\nabla u\nabla\bigl(u-A(u)\bigr)+V(x)|u|^{p-2}u\bigl(u-A(u)\bigr)+\phi_{u}|u|^{p-2}u\bigl(u-A(u)\bigr)\biggr)\notag\\
		&-\int_{\mathbb{R}^{3}}f(u)\bigl(u-A(u)\bigr)\notag\\
		=&\int_{\mathbb{R}^{3}}\bigl(|\nabla u|^{p-2}\nabla u-|\nabla v|^{p-2}\nabla v\bigr)(\nabla u-\nabla v)+\int_{\mathbb{R}^{3}}V(x)\bigl(|u|^{p-2}u-|v|^{p-2}v\bigr)(u-v)\notag\\
		&+\int_{\mathbb{R}^{3}}\phi_{u}\bigl(|u|^{p-2}u-|v|^{p-2}v\bigr)(u-v).\label{3.10}\tag{3.10}
	\end{align}
	Together with \eqref{3.10} and Lemma \ref{lemma3.1}, we have
	\begin{align}
		\langle I'(u),u-A(u)\rangle
		\geqslant&C\biggl(\int_{\mathbb{R}^{3}}\bigl(|\nabla u|+|\nabla v|\bigr)^{p-2}|\nabla u-\nabla v|^{2}+\int_{\mathbb{R}^{3}}V(x)\bigl(|u|+|v|\bigr)^{p-2}|u-v|^{2}\notag\\
		&+\int_{\mathbb{R}^{3}}\phi_{u}\bigl(|u|+|v|\bigr)^{p-2}|u-v|^{2}\biggr).\label{3.11}\tag{3.11}
	\end{align}
	By using \eqref{3.4} and H\"older's inequality, we get
	\begin{align}
		\|u-v\|_{E}^{p}
		\leqslant&\biggl(\int_{\mathbb{R}^{3}}|\nabla u-\nabla v|^{2}\bigl(|\nabla u|+|\nabla v|\bigr)^{p-2}\biggr)^{\frac{p}{2}}\biggl(\int_{\mathbb{R}^{3}}\bigl(|\nabla u|+|\nabla v|\bigr)^{p}\biggr)^{\frac{2-p}{2}}\notag\\
		&+\biggl(\int_{\mathbb{R}^{3}}V(x)|u-v|^{2}\bigl(|u|+|v|\bigr)^{p-2}\biggr)^{\frac{p}{2}}\biggl(\int_{\mathbb{R}^{3}}V(x)\bigl(|u|+|v|\bigr)^{p}\biggr)^{\frac{2-p}{2}}.\notag
	\end{align}
	Further we have
	\begin{align}
		\|u-v\|_{E}^{p}
		\leqslant&C_{3}\biggl(\int_{\mathbb{R}^{3}}|\nabla u-\nabla v|^{2}\bigl(|\nabla u|+|\nabla v|\bigr)^{p-2}+\int_{\mathbb{R}^{3}}V(x)|u-v|^{2}\bigl(|u|+|v|\bigr)^{p-2}\biggr)^{\frac{p}{2}}\notag\\
		&\bigl(\|u\|_{E}+\|v\|_{E}\bigr)^{\frac{p(2-p)}{2}}.\notag
	\end{align}
	Set $\int_{\mathbb{R}^{3}}|\nabla u-\nabla v|^{2}\bigl(|\nabla u|+|\nabla v|\bigr)^{p-2}+\int_{\mathbb{R}^{3}}V(x)|u-v|^{2}\bigl(|u|+|v|\bigr)^{p-2}=e$, then
	\[\|u-v\|_{E}^{p}\leqslant C_{3}e^{\frac{p}{2}}\bigl(\|u\|_{E}+\|v\|_{E}\bigr)^{\frac{p(2-p)}{2}}.\]
	Furthermore,
	\[e\geqslant C_{4}\|u-v\|_{E}^{2}\bigl(\|u\|_{E}+\|v\|_{E}\bigr)^{p-2}.\]
	Together with \eqref{3.11}, wo obtain
	\[\langle I'(u),u-A(u)\rangle\geqslant C\biggl(e+\int_{\mathbb{R}^{3}}\phi_{u}\bigl(|u|+|v|\bigr)^{p-2}|u-v|^{2}\biggr).\]
	Therefore, if $1<p\leqslant2$, for all $u\in E$
	\begin{align}
		\langle I'(u),u-A(u)\rangle&\geqslant C\biggl(C_{4}\|u-v\|_{E}^{2}\bigl(\|u\|_{E}+\|v\|_{E}\bigr)^{p-2}+\int_{\mathbb{R}^{3}}\phi_{u}\bigl(|u|+|v|\bigr)^{p-2}|u-v|^{2}\biggr)\notag\\
		&\geqslant C_{5}\|u-A(u)\|_{E}^{2}\bigl(\|u\|_{E}+\|A(u)\|_{E}\bigr)^{p-2}.\notag
	\end{align}
	Using \eqref{3.5} and Lemma \ref{lemma3.1}, and the fact that $p\geqslant2$, one sees that
	\begin{align}
		\langle I'(u),u-A(u)\rangle
		&\geqslant a_{1}\biggl(\int_{\mathbb{R}^{3}}|\nabla u-\nabla v|^{p}+\int_{\mathbb{R}^{3}}V(x)|u-v|^{p}+\int_{\mathbb{R}^{3}}\phi_{u}|u-v|^{p}\biggr)\notag\\
		&\geqslant a_{1}\|u-v\|_{E}^{p}.\notag 
	\end{align}	
	Similar to \eqref{3.10}, we get
	\begin{align}
		\langle I'(u),\varphi\rangle=&\int_{\mathbb{R}^{3}}\bigl(|\nabla u|^{p-2}\nabla u-|\nabla v|^{p-2}\nabla v\bigr)\nabla\varphi+\int_{\mathbb{R}^{3}}V(x)\bigl(|u|^{p-2}u-|v|^{p-2}v\bigr)\varphi\notag\\
		&+\int_{\mathbb{R}^{3}}\phi_{u}\bigl(|u|^{p-2}u-|v|^{p-2}v\bigr)\varphi.\notag
	\end{align}
	By Lemma \ref{lemma3.1} and the fact that $1<p\leqslant2$, we obtain
	\begin{align}
		\langle I'(u),\varphi\rangle\leqslant&\int_{\mathbb{R}^{3}}\bigl||\nabla u|^{p-2}\nabla u-|\nabla v|^{p-2}\nabla v\bigl||\nabla\varphi|+\int_{\mathbb{R}^{3}}V(x)\bigl||u|^{p-2}u-|v|^{p-2}v\bigl||\varphi|\notag\\
		&+\int_{\mathbb{R}^{3}}\phi_{u}\bigl(|u|^{p-2}u-|v|^{p-2}v\bigr)\varphi\notag\\
		\leqslant&C\biggl(\int_{\mathbb{R}^{3}}\bigl(d_{3}|\nabla u-\nabla v|^{p-1}\bigr)^{\frac{p}{p-1}}+\int_{\mathbb{R}^{3}}V(x)\bigl(d_{4}|u-v|^{p-1}\bigr)^{\frac{p}{p-1}}\biggr)^{\frac{p-1}{p}}\|\varphi\|_{E}\notag\\
		&+\int_{\mathbb{R}^{3}}\phi_{u}\bigl(|u|^{p-2}u-|v|^{p-2}v\bigr)\varphi.\notag
	\end{align}
Thus,
\begin{align}
	\langle I'(u),\varphi\rangle\leqslant& C_{1}\biggl(\int_{\mathbb{R}^{3}}|\nabla(u-v)|^{p}+V(x)|u-v|^{p}\biggr)^{\frac{p-1}{p}}\|\varphi\|_{E}+\int_{\mathbb{R}^{3}}\phi_{u}\bigl(|u|^{p-2}u-|v|^{p-2}v\bigr)\varphi\notag\\
	\leqslant& C_{1}\|u-v\|_{E}^{p-1}\|\varphi\|_{E}+\int_{\mathbb{R}^{3}}\phi_{u}\bigl(|u|^{p-2}u-|v|^{p-2}v\bigr)\varphi.\label{3.13}\tag{3.13}
\end{align}
	Therefore, if $1<p\leqslant2$, using Remark \ref{remark2.1}, Remark \ref{remark2.2} and Lemma \ref{lemma3.1},\;H\"older's inequality and Sobolev's inequality, it follows that
	\begin{align}
		\int_{\mathbb{R}^{3}}\phi_{u}\bigl(|u|^{p-2}u-|v|^{p-2}v\bigr)\varphi\leqslant&C\int_{\mathbb{R}^{3}}\phi_{u}|u-v|^{p-1}\varphi\notag\\
		\leqslant& C\biggl(\int_{\mathbb{R}^{3}}|\phi_{u}|^{6}\biggr)^{\frac{1}{6}}\biggl(\int_{\mathbb{R}^{3}}|u-v|^{p}\biggr)^{\frac{p-1}{p}}\biggl(\int_{\mathbb{R}^{3}}|\varphi|^{\frac{6}{6-p}}\biggr)^{\frac{6-p}{6}}\notag\\
		\leqslant&C_{1}\|u\|_{E}^{p}\|u-v\|_{E}^{p-1}\|\varphi\|_{E}.\label{3.14}\tag{3.14}
	\end{align}
	By \eqref{3.13} and \eqref{3.14}, we obtain
	\begin{align}
		\langle I'(u),\varphi\rangle&\leqslant C_{1}\|u-v\|_{E}^{p-1}\|\varphi\|_{E}+C\|u\|_{E}^{p}\|u-v\|_{E}^{p-1}\|\varphi\|_{E}\notag\\
		&\leqslant C\|u-v\|_{E}^{p-1}\bigl(1+\|u\|_{E}^{p}\bigr)\|\varphi\|_{E}.\notag
	\end{align}
	Thus,
	\[\|I'(u)\|_{E^{*}}=\sup_{\varphi\in E}\frac{\|\langle I'(u),\varphi\rangle\|}{\|\varphi\|_{E}}\leqslant C\|u-v\|_{E}^{p-1}\bigl(1+\|u\|_{E}^{p}\bigr).\]
	If $p\geqslant2$, it follows from Lemma \ref{lemma3.1},\;H\"older's inequality and Sobolev's inequality that
	\begin{align}
		\int_{\mathbb{R}^{3}}\phi_{u}\bigl(|u|^{p-2}-|v|^{p-2}v\bigr)\varphi
		\leqslant&C\int_{\mathbb{R}^{3}}\phi_{u}\bigl(|u|+|v|\bigr)^{p-2}|u-v|\varphi\notag\\
		\leqslant& C\biggl(\int_{\mathbb{R}^{3}}|\phi_{u}|^{6}\biggr)^{\frac{1}{6}}\biggl(\int_{\mathbb{R}^{3}}\bigl(|u|+|v|\bigr)^{\frac{6p(p-2)}{5p-6}}\varphi^{\frac{6p}{5p-6}}\biggr)^{\frac{5p-6}{6p}}\notag\\
		&\biggl(\int_{\mathbb{R}^{3}}|u-v|^{p}\biggr)^{\frac{1}{p}}\notag\\
		\leqslant& C\|\phi_{u}\|_{6}\biggl(\int_{\mathbb{R}^{3}}\bigl(|u|+|v|\bigr)^{p}\biggr)^{\frac{p-2}{p}}\biggl(\int_{\mathbb{R}^{3}}|\varphi|^{\frac{6p}{6-p}}\biggr)^{\frac{6-p}{6p}}\|u-v\|_{p}.\notag
	\end{align}
	Then Remark \ref{remark2.1} implies that
	\begin{align}
		\int_{\mathbb{R}^{3}}\phi_{u}\bigl(|u|^{p-2}-|v|^{p-2}v\bigr)\varphi\leqslant&C\|u\|_{E}^{p}\|u-v\|_{p}\||u|+|v|\|_{p}^{p-2}\|\varphi\|_{\frac{6p}{6-p}}\notag\\
		\leqslant&C\|u\|_{E}^{p}\|u-v\|_{E}\bigl(\|u\|_{E}+\|v\|_{E}\bigr)^{p-2}\|\varphi\|_{E}.\label{3.15}\tag{3.15}
	\end{align}
	Similar to \eqref{3.10}, by Lemma \ref{lemma3.1}, we obtain
	\begin{align}
		\langle& I'(u),\varphi\rangle\notag\\
		\leqslant&\biggl(\int_{\mathbb{R}^{3}}\bigl(d_{1}\bigl(|\nabla u|+|\nabla v|\bigr)^{p-2}|\nabla u-\nabla v|\bigr)^{\frac{p}{p-1}}+\int_{\mathbb{R}^{3}}\bigl(V(x)d_{2}(|u|+|v|)^{p-2}|u-v|\bigr)^{\frac{p}{p-1}}\biggr)^{\frac{p-1}{p}}\notag\\
		&\|\varphi\|_{E}+\int_{\mathbb{R}^{3}}\phi_{u}\bigl(|u|^{p-2}u-|v|^{p-2}v\bigr)\varphi\notag\\
		\leqslant&C_{1}\biggl[\biggl(\int_{\mathbb{R}^{3}}\bigl(|\nabla u|+|\nabla v|\bigr)^{p-1}\biggr)^{\frac{p-2}{p-1}}\biggl(\int_{\mathbb{R}^{3}}|\nabla(u-v)|^{p}\biggr)^{\frac{1}{p-1}}+\biggl(\int_{\mathbb{R}^{3}}V(x)\bigl(|u|+|v|\bigr)^{p-1}\biggr)^{\frac{p-2}{p-1}}\notag\\
		&\biggl(\int_{\mathbb{R}^{3}}V(x)|u-v|^{p}\biggr)^{\frac{1}{p-1}}\biggr]^{\frac{p-1}{p}}\|\varphi\|_{E}+\int_{\mathbb{R}^{3}}\phi_{u}\bigl(|u|^{p-2}u-|v|^{p-2}v\bigr)\varphi.\notag
	\end{align}
	Thus,
	\begin{align}
		\langle I'(u),\varphi\rangle
		\leqslant& C_{3}\biggl\{\biggl(\int_{\mathbb{R}^{3}}|\nabla(u-v)|^{p}+V(x)|u-v|^{p}\biggr)^{\frac{1}{p}}\biggl[\biggl(\int_{\mathbb{R}^{3}}|\nabla u|^{p}+V(x)|u|^{p}\biggr)^{\frac{1}{p}}\notag\\
		&+\biggl(\int_{\mathbb{R}^{3}}|\nabla v|^{p}+V(x)|v|^{p}\biggr)^{\frac{1}{p}}\biggr]^{p-2}\biggr\}\|\varphi\|_{E}+\int_{\mathbb{R}^{3}}\phi_{u}\bigl(|u|^{p-2}u-|v|^{p-2}v\bigr)\varphi\notag\\
		\leqslant&C\|u-v\|_{E}\bigl(\|u\|_{E}+\|v\|_{E}\bigr)^{p-2}\bigl(1+\|u\|_{E}^{p}\bigr)\|\varphi\|_{E}.\notag
	\end{align}
	Therefore, if $p\geqslant2$,
	\[\|I'(u)\|_{E^{*}}=\sup_{\varphi\in E}\frac{\|\langle I'(u),\varphi\rangle\|}{\|\varphi\|_{E}}\leqslant C\|u-v\|_{E}\bigl(\|u\|_{E}+\|v\|_{E}\bigr)^{p-2}\bigl(1+\|u\|_{E}^{p}\bigr).\]
\end{proof}
From Lemma \ref{lemma3.3}, it is clear that the set of fixed points of $A$ is the same as the set of critical points of $I$.
\begin{lem}\label{lemma3.4}
	If $\{u_{n}\}\subset E$ is a bounded sequence with $I'(u_{n})\to0$, then $\{u_{n}\}\subset E$  has a convergent subsequence.
\end{lem}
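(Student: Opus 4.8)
The plan is to extract a strongly convergent subsequence by combining weak compactness with the compact embedding of Remark \ref{remark2.2} and then exploiting the monotonicity inequalities of Lemma \ref{lemma3.1} to upgrade weak convergence to strong convergence.

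First I would use boundedness of $\{u_{n}\}$ in the reflexive space $E$ to pass to a subsequence with $u_{n}\rightharpoonup u$ weakly in $E$. By Remark \ref{remark2.2} the embedding $E\hookrightarrow L^{q}(\mathbb{R}^{3})$ is compact for every $q\in[\,p,p^{*})$, so $u_{n}\to u$ strongly in each such $L^{q}$ and $u_{n}\to u$ a.e. Since $I'(u_{n})\to0$ in $E^{*}$ and $\{u_{n}-u\}$ is bounded, we also have $\langle I'(u_{n}),u_{n}-u\rangle\to0$. The next step is to expand this pairing,
\[\langle I'(u_{n}),u_{n}-u\rangle=\int_{\mathbb{R}^{3}}\bigl(|\nabla u_{n}|^{p-2}\nabla u_{n}\nabla(u_{n}-u)+V(x)|u_{n}|^{p-2}u_{n}(u_{n}-u)\bigr)+\int_{\mathbb{R}^{3}}\phi_{u_{n}}|u_{n}|^{p-2}u_{n}(u_{n}-u)-\int_{\mathbb{R}^{3}}f(u_{n})(u_{n}-u),\]
and show that every term except the leading $p$-Laplacian/potential part tends to zero. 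The nonlinear term vanishes by the growth bound \eqref{3.6}, H\"older's inequality, and the strong $L^{q}$ convergence $u_{n}-u\to0$; the nonlocal term vanishes by the same computation as in \eqref{3.7} (with $v_{n},v$ replaced by $u_{n},u$), using $\|\phi_{u_{n}}\|_{D^{1,2}}\leqslant C\|u_{n}\|_{E}^{p}$ from Lemma \ref{lemma2.1}(3), the embedding $D^{1,2}\hookrightarrow L^{6}$ of Remark \ref{remark2.1}, and the strong $L^{q}$ convergence.

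I would then subtract off the reference term $\int_{\mathbb{R}^{3}}\bigl(|\nabla u|^{p-2}\nabla u\,\nabla(u_{n}-u)+V(x)|u|^{p-2}u(u_{n}-u)\bigr)$, which tends to zero because $|\nabla u|^{p-2}\nabla u\in L^{p/(p-1)}$ and $V(x)|u|^{p-2}u$ lies in the dual space, paired against $\nabla(u_{n}-u)\rightharpoonup0$ and $(u_{n}-u)\rightharpoonup0$. Combining these facts yields
\[\int_{\mathbb{R}^{3}}\bigl(|\nabla u_{n}|^{p-2}\nabla u_{n}-|\nabla u|^{p-2}\nabla u\bigr)\nabla(u_{n}-u)+\int_{\mathbb{R}^{3}}V(x)\bigl(|u_{n}|^{p-2}u_{n}-|u|^{p-2}u\bigr)(u_{n}-u)\longrightarrow0.\]

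Finally I would invoke Lemma \ref{lemma3.1}. When $p\geqslant2$, the fourth inequality bounds the left-hand side below by $d_{4}\bigl(\int_{\mathbb{R}^{3}}|\nabla(u_{n}-u)|^{p}+\int_{\mathbb{R}^{3}}V(x)|u_{n}-u|^{p}\bigr)=d_{4}\|u_{n}-u\|_{E}^{p}$, so $\|u_{n}-u\|_{E}\to0$ immediately. When $1<p\leqslant2$, I would instead use the second inequality of Lemma \ref{lemma3.1} together with the H\"older splitting already carried out in \eqref{3.4}, which gives $\|u_{n}-u\|_{E}^{p}\leqslant C\,e_{n}^{p/2}\bigl(\|u_{n}\|_{E}+\|u\|_{E}\bigr)^{p(2-p)/2}$, where $e_{n}$ denotes the monotone quantity displayed above; since $e_{n}\to0$ and the second factor stays bounded, again $\|u_{n}-u\|_{E}\to0$. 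The main obstacle is the bookkeeping in this last step: one must confirm that only the monotone combination survives after discarding the Poisson and nonlinearity contributions, and then treat the sub- and super-quadratic ranges of $p$ separately via the correct inequality of Lemma \ref{lemma3.1}. These are exactly the estimates already developed in the proofs of Lemma \ref{lemma3.2} and Lemma \ref{lemma3.3}, so the argument amounts to a recombination of those ingredients rather than anything genuinely new.
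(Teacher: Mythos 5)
Your proof is correct and follows essentially the same route as the paper: pass to a weak limit, use the compact embedding of Remark \ref{remark2.2} to discard the Poisson term (via the \eqref{3.7}-type estimate) and the nonlinearity (via \eqref{3.6}/\eqref{3.8}), and then upgrade to strong convergence through the monotonicity inequalities of Lemma \ref{lemma3.1} combined with the H\"older splitting \eqref{3.4}; your use of $\langle I'(u_{n}),u_{n}-u\rangle$ plus subtraction of the fixed reference term is just equivalent bookkeeping for the paper's $\langle I'(u_{n})-I'(u),u_{n}-u\rangle$. If anything, your version is slightly more careful, since you treat $1<p\leqslant2$ and $p\geqslant2$ separately, whereas the paper's estimate \eqref{3.16} with exponent $\frac{2-p}{2}$ is literally valid only for $1<p\leqslant2$ and leaves the case $p\geqslant2$ implicit.
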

\begin{proof}
	By the boundedness of $\{u_{n}\}\subset E$, we let $u_{n}\rightharpoonup u$ in $E$. Without loss of generality, we assume $u\neq A(u)$. Note that
	\begin{align}
		\langle I'(u_{n})&-I'(u), u_{n}-u\rangle\notag\\
		=&\int_{\mathbb{R}^{3}}\bigl(|\nabla u_{n}|^{p-2}\nabla u_{n}-|\nabla u|^{p-2}\nabla u\bigr)(\nabla u_{n}-\nabla u)\notag\\
		&+\int_{\mathbb{R}^{3}}V(x)\bigl(|u_{n}|^{p-2}u_{n}-|u|^{p-2}u\bigr)(u_{n}-u)\notag\\
		&+\int_{\mathbb{R}^{3}}\bigl(\phi_{u_{n}}|u_{n}|^{p-2}u_{n}-\phi_{u}|u|^{p-2}u\bigr)(u_{n}-u)-\int_{\mathbb{R}^{3}}(f(u_{n})-f(u))(u_{n}-u).\notag
	\end{align}
	Similar to \eqref{3.5}, we get
	\begin{align}
	\|u_{n}-u\|_{E}^{p}\leqslant&C\biggl(\int_{\mathbb{R}^{3}}\bigl(|\nabla u_{n}|^{p-2}\nabla u_{n}-|\nabla u|^{p-2}\nabla u\bigr)(\nabla u_{n}-\nabla u)\notag\\
	&+\int_{\mathbb{R}^{3}}V(x)\bigl(|u_{n}|^{p-2}u_{n}-|u|^{p-2}u\bigr)(u_{n}-u)\biggr)^{\frac{p}{2}}\bigl(\|u_{n}\|_{E}^{p}+\|u\|_{E}^{p}\bigr)^{\frac{2-p}{2}}.\notag
	\end{align}
	Then,
	\begin{align}
		\|u_{n}-u\|_{E}^{p}\leqslant&C\biggl(\langle I'(u_{n})-I'(u), u_{n}-u\rangle+\int_{\mathbb{R}^{3}}\bigl(\phi_{u_{n}}|u_{n}|^{p-2}u_{n}-\phi_{u}|u|^{p-2}u\bigr)(u-u_{n})\notag\\
		&+\int_{\mathbb{R}^{3}}(f(u_{n})-f(u))(u_{n}-u)\biggr)^{\frac{p}{2}}\bigl(\|u_{n}\|_{E}^{p}+\|u\|_{E}^{p}\bigr)^{\frac{2-p}{2}}.\tag{3.16}\label{3.16}
	\end{align}
	Similar to \eqref{3.7}, we have
	\[\int_{\mathbb{R}^{3}}\bigl|\phi_{u_{n}}|u_{n}|^{p-2}u_{n}-\phi_{u}|u|^{p-2}u\bigr|\bigl|u-u_{n}\bigr|\leqslant C_{1}\bigl(\|u_{n}\|_{E}^{2p-1}+\|u\|_{E}^{2p-1}\bigr)\|u_{n}-u\|_{\frac{6p}{6-p}}.\]
	By the boundedness of $\{u_{n}\}\subset E$ and Lemma \ref{lemma3.1}, one sees that
	\[\int_{\mathbb{R}^{3}}\bigl|\phi_{u_{n}}|u_{n}|^{p-2}u_{n}-\phi_{u}|u|^{p-2}u\bigr|\bigl|u_{n}-u\bigr|\leqslant C_{2}\|u_{n}-u\|_{\frac{6p}{6-p}}.\]
	Since $\|u_{n}-u\|_{\frac{6p}{6-p}}\to0$, as $n\to\infty$, we have proved that
	\[\int_{\mathbb{R}^{3}}\bigl|\phi_{u_{n}}|u_{n}|^{p-2}u_{n}-\phi_{u}|u|^{p-2}u\bigr|\bigl|u_{n}-u\bigr|\to0.\]
	Similar to \eqref{3.8}, we get
	\[\int_{\mathbb{R}^{3}}|f(u_{n})-f(u)||u-u_{n}|\leqslant C_{3}\|u_{n}-u\|_{q}^{q}.\]
	Together with Remark \ref{remark2.2}, we obtain
	\[\int_{\mathbb{R}^{3}}|f(u_{n})-f(u)||u-u_{n}|\to0.\]
	Consequently, by $I'(u_{n})\to0$ and \eqref{3.16}, we know that $\|u_{n}-u\|_{E}\to0$.
\end{proof}

\begin{lem}\label{lemma3.5}
	For $a<b,\alpha>0$, there exists $\beta>0$, such that $\|u-A(u)\|_{E}\geqslant\beta$ if $u\in E,I(u)\in[a,b]$ and $\|I'(u)\|_{E^{*}}\geqslant\alpha$.
\end{lem}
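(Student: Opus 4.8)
The plan is to argue by contradiction. Suppose no such $\beta$ exists; then there is a sequence $\{u_n\}\subset E$ with $I(u_n)\in[a,b]$, $\|I'(u_n)\|_{E^*}\geqslant\alpha$ for every $n$, and $\|u_n-A(u_n)\|_E\to0$. Writing $v_n=A(u_n)$ and $w_n=u_n-v_n$, so that $\|w_n\|_E\to0$, the whole argument reduces to a single point: once I know that $\{u_n\}$ is bounded in $E$, I am finished. Indeed, $\{v_n\}$ is then bounded as well (since $\|v_n\|_E\leqslant\|u_n\|_E+\|w_n\|_E$), and the upper bounds for $\|I'(\cdot)\|_{E^*}$ in Lemma~\ref{lemma3.3} — namely $\|I'(u_n)\|_{E^*}\leqslant a_2\|w_n\|_E^{p-1}(1+\|u_n\|_E^p)$ when $1<p\leqslant2$, and $\|I'(u_n)\|_{E^*}\leqslant a_2\|w_n\|_E(\|u_n\|_E+\|v_n\|_E)^{p-2}(1+\|u_n\|_E^p)$ when $p\geqslant2$ — force $\|I'(u_n)\|_{E^*}\to0$ as $\|w_n\|_E\to0$, contradicting $\|I'(u_n)\|_{E^*}\geqslant\alpha>0$.

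So the real content is the boundedness of $\{u_n\}$, and here the assumption $\mu>2p$ enters decisively. I would exploit $(f_3)$ through the combination $\mu I(u_n)-\langle I'(u_n),u_n\rangle$, which a direct computation rewrites as
\[
\mu I(u_n)-\langle I'(u_n),u_n\rangle=\Bigl(\frac{\mu}{p}-1\Bigr)\|u_n\|_E^p+\Bigl(\frac{\mu}{2p}-1\Bigr)\int_{\mathbb{R}^3}\phi_{u_n}|u_n|^p+\int_{\mathbb{R}^3}\bigl(f(u_n)u_n-\mu F(u_n)\bigr).
\]
All three terms are nonnegative: the last by $(f_3)$, the middle because $\phi_{u_n}\geqslant0$ (Lemma~\ref{lemma2.1}) and $\mu>2p$, and the first because $\mu>p$. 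On the other hand I split $\langle I'(u_n),u_n\rangle=\langle I'(u_n),w_n\rangle+\langle I'(u_n),v_n\rangle$, where $\langle I'(u_n),w_n\rangle=\langle I'(u_n),u_n-A(u_n)\rangle\geqslant0$ by \eqref{3.10} and the monotonicity inequalities of Lemma~\ref{lemma3.1}. Using $I(u_n)\leqslant b$, these two facts give
\[
\Bigl(\frac{\mu}{p}-1\Bigr)\|u_n\|_E^p+\Bigl(\frac{\mu}{2p}-1\Bigr)\int_{\mathbb{R}^3}\phi_{u_n}|u_n|^p\leqslant \mu b+\bigl|\langle I'(u_n),v_n\rangle\bigr|,
\]
so everything comes down to estimating $\langle I'(u_n),v_n\rangle$ by a small multiple of the two coercive quantities on the left plus a term tending to $0$.

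Expanding $\langle I'(u_n),v_n\rangle$ as in the proof of Lemma~\ref{lemma3.3} and applying Lemma~\ref{lemma3.1}, the local (gradient and potential) contributions are bounded by $C(\|u_n\|_E+\|v_n\|_E)^{p-1}\|w_n\|_E$, which Young's inequality lets me absorb as $\varepsilon\|u_n\|_E^p+C_\varepsilon\|w_n\|_E^p$. The main obstacle is the nonlocal term $\int_{\mathbb{R}^3}\phi_{u_n}\bigl(|u_n|^{p-2}u_n-|v_n|^{p-2}v_n\bigr)v_n$, which a priori scales like $\|u_n\|_E^{2p-1}\|w_n\|_E$ and would therefore dominate $\|u_n\|_E^p$. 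The key is to control it against $\int_{\mathbb{R}^3}\phi_{u_n}|u_n|^p$ \emph{itself} rather than against a power of $\|u_n\|_E$. Estimating $\bigl||u_n|^{p-2}u_n-|v_n|^{p-2}v_n\bigr|\leqslant C(|u_n|+|v_n|)^{p-1}$ and applying H\"older's inequality \emph{with respect to the measure} $\phi_{u_n}\,\mathrm{d}x$, one obtains
\[
\int_{\mathbb{R}^3}\phi_{u_n}\bigl(|u_n|+|v_n|\bigr)^{p-1}|w_n|\leqslant C\Bigl(\int_{\mathbb{R}^3}\phi_{u_n}|u_n|^p\Bigr)^{\frac{p-1}{p}}\Bigl(\int_{\mathbb{R}^3}\phi_{u_n}|w_n|^p\Bigr)^{\frac1p}+C\int_{\mathbb{R}^3}\phi_{u_n}|w_n|^p,
\]
and then $\int_{\mathbb{R}^3}\phi_{u_n}|w_n|^p\leqslant C\|\phi_{u_n}\|_{D^{1,2}}\|w_n\|_E^p$ by Remark~\ref{remark2.1} and Sobolev's inequality.

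A further use of Young's inequality converts the right-hand side into $\varepsilon\int_{\mathbb{R}^3}\phi_{u_n}|u_n|^p+C_\varepsilon\|w_n\|_E^{2p}$. Choosing $\varepsilon$ small enough to absorb both $\varepsilon\|u_n\|_E^p$ and $\varepsilon\int_{\mathbb{R}^3}\phi_{u_n}|u_n|^p$ into the left-hand side, I arrive at $\|u_n\|_E^p\leqslant \mu b+C\bigl(\|w_n\|_E^p+\|w_n\|_E^{2p}\bigr)\leqslant C$, whence $\{u_n\}$ is bounded. Together with the reduction in the first paragraph, this completes the proof; the decisive and most delicate step is precisely the absorption of the nonlocal Poisson term into the coercive $\phi_{u_n}$-energy, which is exactly where the hypothesis $\mu>2p$ is needed.
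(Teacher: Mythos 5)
Your proof is correct and, at the level of overall strategy, matches the paper's: argue by contradiction, show the hypothetical sequence $\{u_{n}\}$ with $\|u_{n}-A(u_{n})\|_{E}\to0$ is bounded using $(f_{3})$ together with $\mu>2p$, then invoke the upper bounds of Lemma \ref{lemma3.3} to force $\|I'(u_{n})\|_{E^{*}}\to0$, contradicting $\|I'(u_{n})\|_{E^{*}}\geqslant\alpha$. The differences are in the two technical steps, and they are worth recording. For boundedness, the paper forms $I(u)-\frac{1}{\mu}\langle u-v,u\rangle_{E}$, i.e.\ it pairs the difference of the quasilinear operators at $u$ and at $v=A(u)$ against $u$, whereas you form $\mu I(u_{n})-\langle I'(u_{n}),u_{n}\rangle$ and discard the nonnegative term $\langle I'(u_{n}),u_{n}-A(u_{n})\rangle\geqslant0$ coming from \eqref{3.10} and Lemma \ref{lemma3.1}; these are algebraic rearrangements of the same idea, and both produce the coercive quantities $\|u_{n}\|_{E}^{p}$ and $\int_{\mathbb{R}^{3}}\phi_{u_{n}}|u_{n}|^{p}$ plus a nonlocal cross term that must be absorbed. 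For that cross term the paper applies H\"older's inequality with the factor $\bigl(\int_{\mathbb{R}^{3}}|\phi_{u}|^{p}\bigr)^{1/p}$ --- a delicate step, since for $p<3$ the decay $\phi_{u}(x)\sim|x|^{-1}$ at infinity does not place $\phi_{u}$ in $L^{p}(\mathbb{R}^{3})$ --- while your H\"older inequality taken with respect to the measure $\phi_{u_{n}}\,\mathrm{d}x$ sidesteps this and is the cleaner route. One small repair is needed in your final absorption: the bound you actually establish is $\int_{\mathbb{R}^{3}}\phi_{u_{n}}|w_{n}|^{p}\leqslant C\|u_{n}\|_{E}^{p}\|w_{n}\|_{E}^{p}$, which carries a factor $\|u_{n}\|_{E}^{p}$ that Young's inequality alone does not remove, so your stated conversion into $\varepsilon\int_{\mathbb{R}^{3}}\phi_{u_{n}}|u_{n}|^{p}+C_{\varepsilon}\|w_{n}\|_{E}^{2p}$ is not literally justified as written; it becomes so if you instead use Remark \ref{remark2.3}, namely $\int_{\mathbb{R}^{3}}\phi_{u_{n}}|w_{n}|^{p}\leqslant\bigl(\int_{\mathbb{R}^{3}}\phi_{u_{n}}|u_{n}|^{p}\bigr)^{1/2}\bigl(\int_{\mathbb{R}^{3}}\phi_{w_{n}}|w_{n}|^{p}\bigr)^{1/2}$ together with $\int_{\mathbb{R}^{3}}\phi_{w_{n}}|w_{n}|^{p}\leqslant C\|w_{n}\|_{E}^{2p}$, or if you simply observe --- exactly as the paper does after \eqref{3.17} --- that $\|w_{n}\|_{E}\to0$ allows $C\|u_{n}\|_{E}^{p}\|w_{n}\|_{E}^{p}$ to be absorbed into the left-hand side for $n$ large. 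With either fix your argument closes, and the hypothesis $\mu>2p$ enters precisely where you say it does.
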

\begin{proof}
	By Lemma \ref{lemma3.4}, we prove that $I$ satisfies the (PS) condition. For $u\in E$, by $(f_{3})$, we have
	\begin{align}
		I(u)&-\frac{1}{\mu}\langle u-v,u\rangle_{E}\notag\\
		=&\biggl(\frac{1}{p}-\frac{1}{\mu}\biggr)\|u\|_{E}^{p}+\frac{1}{2p}\int_{\mathbb{R}^{3}}\phi_{u}|u|^{p}-\frac{1}{\mu}\int_{\mathbb{R}^{3}}\phi_{u}|v|^{p-2}vu+\int_{\mathbb{R}^{3}}\biggl(\frac{1}{\mu}f(u)u-F(u)\biggr)\notag\\
		=&\biggl(\frac{1}{p}-\frac{1}{\mu}\biggr)\|u\|_{E}^{p}+\biggl(\frac{1}{2p}-\frac{1}{\mu}\biggr)\int_{\mathbb{R}^{3}}\phi_{u}|u|^{p}+\frac{1}{\mu}\int_{\mathbb{R}^{3}}\phi_{u}u\bigl(|u|^{p-2}u-|v|^{p-2}v\bigr).\notag
	\end{align}
	Then,
	\[\|u\|_{E}^{p}+\int_{\mathbb{R}^{3}}\phi_{u}|u|^{p}\leqslant C\biggl(|I(u)|+\|u\|_{E}\|u-v\|_{E}+\bigg|\int_{\mathbb{R}^{3}}\phi_{u}u(|u|^{p-2}u-|v|^{p-2}v)\biggl|\biggr).\]
	By H\"older's inequality, for any $\varepsilon>0$, there exists $C_{\varepsilon}>0$ such that 
	\begin{align}
		\biggr|\int_{\mathbb{R}^{3}}\phi_{u}u(|u|^{p-2}u-|v|^{p-2}v)\biggl|&\leqslant C\biggl(\int_{\mathbb{R}^{3}}|\phi_{u}|^{p}\biggr)^{\frac{1}{p}}\|u\|_{E}^{p-1}\|u-v\|_{E}^{p-1}\notag\\
		&\leqslant\varepsilon\int_{\mathbb{R}^{3}}|\phi_{u}|^{p}+C_{\varepsilon}\|u\|_{E}^{p-1}\|u-v\|_{E}^{p-1}.\notag
	\end{align}
	Therefore,
	\begin{align}
		\|u\|_{E}^{p}+\int_{\mathbb{R}^{3}}\phi_{u}|u|^{p}\leqslant C\biggl(|I(u)|+\|u\|_{E}\|u-v\|_{E}+\varepsilon\int_{\mathbb{R}^{3}}|\phi_{u}|^{p}+C_{\varepsilon}\|u\|_{E}^{p-1}\|u-v\|_{E}^{p-1}\biggr).\notag
	\end{align}
	Thus,
	\[\|u\|_{E}^{p}\leqslant C\bigl(|I(u)|+\|u\|_{E}\|u-v\|_{E}+\|u\|_{E}^{p}\|u-v\|_{E}^{p}\bigr).\tag{3.17}\label{3.17}\]
	Assume on the contrary that there exists $\{u_{n}\}\subset E$ with $I(u_{n})\in[a,b]$ and $\|I'(u_{n})\|_{E^{*}}\geqslant\alpha$, such that $\|u_{n}-A(u_{n})\|_{E}\to 0$ as $n\to\infty$. Then it follows from \eqref{3.17} that $\{\|u_{n}\|_{E}\}$ is bounded. And by Lemma \ref{lemma3.3}, whether $1<p\leqslant2$ or $p\geqslant 2$, there is $\|I'(u_{n})\|_{E^{*}}\to 0$ as $n\to\infty$, which is a contradiction. The proof is completed.
\end{proof}

\subsection{Invariant subsets of descending flow}

In order to obtain sign-changing solutions, we refer to \cite{bartsch2005nodal,liu2016infinitely}. Precisely, we define the positive and negative cones by
\[P^{+}:=\{u\in E: u\geqslant0\}\;\;\text{and}\;\;P^{-}:=\{u\in E: u\leqslant0\}.\]
For $\varepsilon>0$, set
\[P_{\varepsilon}^{+}:=\{u\in E: \text{dist}(u,P^{+})<\varepsilon\}\;\;\text{and}\;\;P_{\varepsilon}^{-}:=\{u\in E: \text{dist}(u,P^{-})<\varepsilon\},\]
where $\text{dist}(u,P^{\pm})=\inf_{v\in P^{\pm}}\|u-v\|_{E}$. Obviously, $P_{\varepsilon}^{-}=-P_{\varepsilon}^{+}$. Let $W=P_{\varepsilon}^{+}\cup P_{\varepsilon}^{-}$. It is
easy to check that $W$ is an open and symmetric subset of $E$ and $E\backslash W$ contains only sign-changing functions. In the following, we will show that for $\varepsilon$ small, all sign-changing solutions to \eqref{2.1} are contained in $E\backslash W$.

\begin{lem}\label{lemma3.6}
	There exists $\varepsilon_{0}>0$ such that for $\varepsilon\in(0,\varepsilon_{0})$,\\
	$(1)$\;$A(\partial P_{\varepsilon}^{-})\subset P_{\varepsilon}^{-}$ and every nontrivial solution $u\in P_{\varepsilon}^{-}$is negative,\\
	$(2)$\;$A(\partial P_{\varepsilon}^{+})\subset P_{\varepsilon}^{+}$ and every nontrivial solution $u\in P_{\varepsilon}^{+}$ is positive.\\
\end{lem}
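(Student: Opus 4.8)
The plan is to prove the two statements symmetrically, so I would concentrate on part $(2)$ concerning $P_\varepsilon^+$ and obtain part $(1)$ either by the symmetry $P_\varepsilon^- = -P_\varepsilon^+$ together with oddness, or by a verbatim repetition of the argument with signs reversed. The core of the argument is an \emph{a priori} estimate showing that the operator $A$ contracts the distance to the cone by a factor strictly less than $1$, up to a higher-order term that can be absorbed for small $\varepsilon$. Writing $v = A(u)$ and denoting by $u^- = \min\{u,0\}$ the negative part, the key quantity to control is $\operatorname{dist}(v, P^+) \le \|v^-\|_E$, and I would seek an inequality of the form $\operatorname{dist}(v,P^+) \le C\,\varepsilon^{\theta}\operatorname{dist}(u,P^+)$ for some $\theta>0$ whenever $u \in \partial P_\varepsilon^+$, which forces $\operatorname{dist}(v,P^+) < \varepsilon$ once $\varepsilon$ is small enough.

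First I would test the defining equation \eqref{3.1} for $v = A(u)$ against $v^-$, the negative part of $v$. Using the monotonicity inequalities of Lemma \ref{lemma3.1} for the $p$-Laplacian and the $V(x)$-term, the left-hand side is bounded below by a multiple of $\|v^-\|_E^p$ (for $p\ge 2$) or the corresponding weighted quantity (for $1<p\le 2$), while the $\phi_u$-term has a favorable sign since $\phi_u \ge 0$ by Lemma \ref{lemma2.1}$(1)$. On the right-hand side one is left with $\int f(u)\,v^-$. The crucial observation is that $f(u)v^- = f(u)\,v^-$ involves $f(u)$, and using the growth hypotheses $(f_1)$–$(f_2)$ in the form \eqref{3.6}, $|f(u)| \le \delta|u|^{p-1} + C_\delta |u|^{q-1}$, together with the elementary fact that $\int f(u)v^- $ only picks up contributions where $u$ and $v^-$ interact, one can estimate this integral by $\operatorname{dist}(u,P^+)$ up to constants. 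The point is that $f(t)=0$ at $t=0$ and the superlinear growth means the negative-part contribution is controlled by how far $u$ itself is from the positive cone.

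The main technical obstacle, and the step I would spend the most care on, is converting the integral inequality into a clean estimate on $\operatorname{dist}(v,P^+)$ in the full $E$-norm. For $1<p\le2$ this requires the same Hölder-interpolation trick used in \eqref{3.4}–\eqref{3.5} and in Lemma \ref{lemma3.3}: one splits the weighted quadratic quantity via Hölder's inequality to recover $\|v^-\|_E^p$, pays a price of $(\|u\|_E+\|v\|_E)^{p(2-p)/2}$, and must then invoke the fact that $A$ maps bounded sets to bounded sets (established in Lemma \ref{lemma3.2}) to keep these norm factors under control on the set $\partial P_\varepsilon^+$. The $\phi_u$-term in the estimate of $\int f(u)v^-$ must be handled exactly as in \eqref{3.7}, using Remark \ref{remark2.1}, the bound $\|\phi_u\|_{D^{1,2}}\le C\|u\|_E^p$ from Lemma \ref{lemma2.1}$(3)$, and Sobolev embedding to produce a factor $\|v^-\|_{6p/(6-p)}\le C\|v^-\|_E$; since this carries an extra power of the cone-distance, it is genuinely higher order and is absorbed.

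Finally, once the contraction estimate $\operatorname{dist}(A(u),P^+) \le C\varepsilon^{\theta}\operatorname{dist}(u,P^+)$ is in hand on $\partial P_\varepsilon^+$, choosing $\varepsilon_0$ so that $C\varepsilon_0^{\theta}<1$ yields $A(\partial P_\varepsilon^+)\subset P_\varepsilon^+$ for all $\varepsilon\in(0,\varepsilon_0)$, which is the first assertion. For the second assertion, suppose $u\in P_\varepsilon^+$ is a nontrivial solution, so $u=A(u)$ is a fixed point; then the same estimate gives $\operatorname{dist}(u,P^+)=\operatorname{dist}(A(u),P^+)\le C\varepsilon^{\theta}\operatorname{dist}(u,P^+)$, forcing $\operatorname{dist}(u,P^+)=0$, i.e. $u\in P^+$, so $u\ge 0$; a strong maximum principle for the $p$-Laplacian (applicable since $u\not\equiv0$ solves \eqref{2.1} with the zeroth-order coefficient $V(x)+\phi_u\ge0$) then upgrades $u\ge0$ to $u>0$, so $u$ is positive. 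Part $(1)$ follows by replacing $P^+$ with $P^-$ and $v^-$ with $v^+$ throughout.
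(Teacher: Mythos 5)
Your proposal follows essentially the same route as the paper's proof: test the defining equation \eqref{3.1} for $v=A(u)$ against the appropriate signed part of $v$, discard the $\phi_{u}$-term by its sign (Lemma \ref{lemma2.1}(1)), estimate $\int f(u)v^{\pm}$ via \eqref{3.6}, convert the resulting $L^{q}$-norms into cone distances through the auxiliary claim $\|u^{\pm}\|_{q}\leqslant m_{q}\,\text{dist}(u,P^{\mp})$ for $q\in[\,p,p^{*})$, and conclude with a contraction on cone distances plus the strong maximum principle at a fixed point. Two points should be corrected, one of which matters. The contraction you aim for, $\text{dist}(A(u),P^{+})\leqslant C\varepsilon^{\theta}\,\text{dist}(u,P^{+})$, is not obtainable: the $\delta|u|^{p-1}$ piece of \eqref{3.6} contributes a term $C\delta\,\text{dist}(u,P^{+})^{p-1}$ carrying no power of $\varepsilon$ at all, so the correct estimate, and the one the paper derives, is $\text{dist}(v,P^{-})^{p-1}\leqslant C\bigl(\delta+C_{\delta}\,\text{dist}(u,P^{-})^{q-p}\bigr)\text{dist}(u,P^{-})^{p-1}$, where one must first fix $\delta$ small (making $C\delta<\tfrac12$) and only then shrink $\varepsilon_{0}$ so that $CC_{\delta}\varepsilon_{0}^{q-p}<\tfrac14$; the contraction factor is a constant strictly less than $1$, not $C\varepsilon^{\theta}$. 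Since your own second paragraph already contains the $\delta$/$C_{\delta}$ splitting, this is a mislabeled summary rather than a missing idea, but as literally stated that step would fail, and the fixed-point argument at the end (forcing $\text{dist}(u,P^{+})=0$) works just as well with a constant factor $\kappa<1$.

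The second point is that your main ``technical obstacle'' is a detour: no monotonicity inequalities from Lemma \ref{lemma3.1} and no H\"older-interpolation as in \eqref{3.4}--\eqref{3.5} are needed, and there is no case distinction between $1<p\leqslant2$ and $p\geqslant2$. Testing \eqref{3.1} against $v^{+}$ gives the exact identity $\int_{\mathbb{R}^{3}}\bigl(|\nabla v^{+}|^{p}+V(x)|v^{+}|^{p}\bigr)=\int_{\mathbb{R}^{3}}f(u)v^{+}-\int_{\mathbb{R}^{3}}\phi_{u}|v^{+}|^{p}$ (the paper's \eqref{3.18}), because $|\nabla v|^{p-2}\nabla v\cdot\nabla v^{+}=|\nabla v^{+}|^{p}$ and $|v|^{p-2}v\,v^{+}=|v^{+}|^{p}$ pointwise; combined with $\text{dist}(v,P^{-})\leqslant\|v^{+}\|_{E}$ this yields $\text{dist}(v,P^{-})^{p-1}\|v^{+}\|_{E}\leqslant\int f(u^{+})v^{+}$ directly, where the last inequality also uses the sign information $tf(t)>0$ from $(f_{3})$ (which you only gesture at via $f(0)=0$) to replace $f(u)$ by $f(u^{+})$. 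Likewise the $\phi_{u}$-estimate ``as in \eqref{3.7}'' is superfluous, since that term is simply dropped by positivity. With these simplifications your argument coincides with the paper's.
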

\begin{proof}
	We only prove the first one, and the other case is similar. Let $u\in E,v=A(u)$, since $\|u^{-}\|_{q}=\inf_{w\in P^{+}}\|u-w\|_{q},\text{dist}(v,P^{-})\leqslant\|v^{+}\|_{E}$(see \cite{liu2016infinitely}).
	
	We claim that  for any $q\in [\,p,p^{*})$, there exists $m_{q}>0$, $\|u^{\pm}\|_{q}\leqslant m_{q}\text{dist}(u,P^{\mp})$, whose proof is similar to \cite{liu2016infinitely}. By equation \eqref{2.1}, we get
	\begin{align}
		&\int_{\mathbb{R}^{3}}\bigl(|\nabla v|^{p-2}\nabla v\nabla v^{+}+V(x)|v|^{p-2}vv^{+}\bigr)=\int_{\mathbb{R}^{3}}f(u)v^{+}-\int_{\mathbb{R}^{3}}\phi_{u}|v|^{p-2}vv^{+}.\notag
	\end{align}
	Furthermore,
	\begin{equation}
		\int_{\mathbb{R}^{3}}\bigl(|\nabla v^{+}|^{p}+V(x)|v^{+}|^{p}\bigr)
		=\int_{\mathbb{R}^{3}}f(u)v^{+}-\int_{\mathbb{R}^{3}}\phi_{u}|v^{+}|^{p}.\label{3.18}\tag{3.18}
	\end{equation}
	Combining \eqref{3.18} and $(f_{1})$, we have
	\begin{align}
		\text{dist}(v,P^{-})^{p-1}\|v^{+}\|_{E}&\leqslant \int_{\mathbb{R}^{3}}\bigl(|\nabla v^{+}|^{p}+V(x)|v^{+}|^{p}\bigr)\notag\\
		&=\int_{\mathbb{R}^{3}}f(u)v^{+}-\int_{\mathbb{R}^{3}}\phi_{u}|v^{+}|^{p}\notag\\
		&\leqslant \int_{\mathbb{R}^{3}}f(u^{+})v^{+}.\notag
	\end{align}
	Together with \eqref{3.6} and H\"older's inequality, we obtain
	\begin{align}
		\text{dist}(v,P^{-})^{p-1}\|v^{+}\|_{E}\leqslant&\int_{\mathbb{R}^{3}}\bigl(\delta|u^{+}|^{p-1}+C_{\delta}|u^{+}|^{q-1}\bigr)v^{+}\notag\\
		=&\;\delta\|u^{+}\|_{p}^{p-1}\|v^{+}\|_{p}+C_{\delta}\|u^{+}\|_{q}^{q-1}\|v^{+}\|_{q}\notag\\
		\leqslant&\;C\bigl(\delta+C_{\delta}\text{dist}(u,P^{-})^{q-p}\bigr)\text{dist}(u,P^{-})^{p-1}\|v^{+}\|_{E},\notag
	\end{align}
	which further implies that
	\[\text{dist}(v,P^{-})^{p-1}\leqslant C\bigl(\delta+C_{\delta}\text{dist}(u,P^{-})^{q-p}\bigr)\text{dist}(u,P^{-})^{p-1}.\]
	Thus, choosing $\delta$ small enough, there exists $\varepsilon_{0}>0$ such that for $\varepsilon\in(0,\varepsilon_{0})$,
	\[\text{dist}(v,P^{-})^{p-1}\leqslant\biggl(\frac{1}{2}+\frac{1}{4}\biggr)\text{dist}(u,P^{-})^{p-1},\qquad\forall u\in P_{\varepsilon}^{-}.\]
	That means $A(\partial P_{\varepsilon}^{-})\subset P_{\varepsilon}^{-}$. If there exists $u\in P_{\varepsilon}^{-}$ such that $A(u)=u$, then $u\in P^{-}$. If $u\not\equiv0$, by the maximum principle, $u<0$ in $\mathbb{R}^{3}$. 
\end{proof}

Since $A$ is merely continuous, $A$ itself is not applicable to construct a descending flow for $I$, and we have to construct a locally Lipschitz continuous operator $B$ on $E_{0}:=E\backslash K$ which inherits the main properties of $A$.

\begin{lem}\label{lemma3.7}
	There exists a locally Lipschitz continuous operator $B:E_{0}\to E$ such that\\
	$(1)$\;$B(\partial P_{\varepsilon}^{+})\subset P_{\varepsilon}^{+}$ and $B(\partial P_{\varepsilon}^{-})\subset P_{\varepsilon}^{-}$ for $\varepsilon\in (0,\varepsilon_{0});$\\
	$(2)$\;for all $u\in E_{0},$
	\[\frac{1}{2}\|u-B(u)\|_{E}\leqslant\|u-A(u)\|_{E}\leqslant2\|u-B(u)\|_{E};\]
	$(3)$\;for all $u\in E_{0},$ if $1<p\leqslant2,$\\
	\[\langle I'(u),u-B(u)\rangle\geqslant\frac{1}{2}a_{1}\|u-A(u)\|_{E}^{2}(\|u\|_{E}+\|A(u)\|_{E})^{p-2};\]
	if $p\geqslant2,$
	\[\langle I'(u),u-B(u)\rangle\geqslant\frac{1}{2}a_{1}\|u-A(u)\|_{E}^{p};\]
	$(4)$\;if $I$ is even, $A$ is odd, then $B$ is odd.
\end{lem}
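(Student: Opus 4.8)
The plan is to obtain $B$ as a locally finite convex combination of values of $A$ taken at nearby points. The construction rests on three facts available from the preceding lemmas: by Lemma \ref{lemma3.2} the operator $A$ is continuous; since $K$ is precisely the fixed-point set of $A$, the function $u\mapsto\|u-A(u)\|_E$ is continuous and strictly positive on $E_0$; and re-reading the proof of Lemma \ref{lemma3.6} the cone-distance is strictly contracted, $\text{dist}(A(u),P^{\mp})^{p-1}\le\frac{3}{4}\text{dist}(u,P^{\mp})^{p-1}$ for $u\in P_\varepsilon^{\mp}$, so that $A$ sends the closed set $\overline{P_\varepsilon^{\pm}}$ into the open set $P_\varepsilon^{\pm}$. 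I also record that each $P_\varepsilon^{\pm}$ is convex, being a sublevel set of the convex function $\text{dist}(\cdot,P^{\pm})$.

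For each $w\in E_0$ I would choose an open ball $N_w\subset E_0$ about $w$ with radius so small that, for all $u\in N_w$: (a) $\|A(u)-A(w)\|_E\le\theta_w\|u-A(u)\|_E$; and the dichotomy (b) if $w\in\overline{P_\varepsilon^{+}}$ then $A(u)\in P_\varepsilon^{+}$ (possible because $A(w)$ is an interior point of $P_\varepsilon^{+}$ and $A$ is continuous), whereas if $w\notin\overline{P_\varepsilon^{+}}$ then $N_w\cap\overline{P_\varepsilon^{+}}=\emptyset$ (possible because the complement of a closed set is open), and symmetrically for $P_\varepsilon^{-}$. Here $\theta_w\le\frac{1}{4}$ is fixed small enough that, using Lemma \ref{lemma3.3} and the fact that $\|u\|_E$, $\|A(u)\|_E$ and $\|u-A(u)\|_E$ stay within fixed bounds on the small ball $N_w$, the error $\|I'(u)\|_{E^{*}}\|A(u)-A(w)\|_E$ does not exceed half of the descending bound $\langle I'(u),u-A(u)\rangle$. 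Since the metric space $E$ is paracompact, the cover $\{N_w\}$ has a locally finite refinement with a subordinate locally Lipschitz partition of unity $\{\lambda_j\}$; choosing $w_j$ with $\mathrm{supp}\,\lambda_j\subset N_{w_j}$, I set $B(u)=\sum_j\lambda_j(u)A(w_j)$, which is locally a finite sum of Lipschitz functions times fixed vectors and hence locally Lipschitz on $E_0$.

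I would then verify (1)--(3). For (2), since $B(u)-A(u)=\sum_j\lambda_j(u)(A(w_j)-A(u))$ and $u\in N_{w_j}$ whenever $\lambda_j(u)>0$, property (a) gives $\|B(u)-A(u)\|_E\le\frac{1}{4}\|u-A(u)\|_E$, and the reverse triangle inequality yields the two-sided comparison. For (3), I write $\langle I'(u),u-B(u)\rangle=\sum_j\lambda_j(u)\langle I'(u),u-A(w_j)\rangle$ and split each term as $\langle I'(u),u-A(u)\rangle+\langle I'(u),A(u)-A(w_j)\rangle$; the choice of $\theta_w$ makes the error absorb into half of the main term, giving the stated bound in the regimes $1<p\le2$ and $p\ge2$ separately from the two estimates of Lemma \ref{lemma3.3}. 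For (1), if $u\in\partial P_\varepsilon^{+}$ and $\lambda_j(u)>0$, then $u\in N_{w_j}\cap\overline{P_\varepsilon^{+}}$, so the dichotomy forces $w_j\in\overline{P_\varepsilon^{+}}$ and hence $A(w_j)\in P_\varepsilon^{+}$; by convexity of $P_\varepsilon^{+}$ the combination $B(u)$ lies in $P_\varepsilon^{+}$, and the case of $P_\varepsilon^{-}$ is identical.

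For (4) I would symmetrize: when $A$ is odd and $I$ even (so $I'$ is odd) and $P_\varepsilon^{-}=-P_\varepsilon^{+}$, all defining conditions are invariant under $u\mapsto-u$, so one may take a symmetric cover and partition, or simply replace $B$ by $\widetilde B(u)=\frac{1}{2}(B(u)-B(-u))$, which is odd and, using $A(-u)=-A(u)$, $I'(-u)=-I'(u)$ and $P_\varepsilon^{-}=-P_\varepsilon^{+}$, retains (1)--(3). The principal difficulty is engineering a single cover $\{N_w\}$ that delivers all quantitative properties simultaneously; the decisive ingredient is the open/closed dichotomy (b), which guarantees that every representative $A(w_j)$ entering $B(u)$ for $u\in\partial P_\varepsilon^{\pm}$ lands inside the open convex cone $P_\varepsilon^{\pm}$.
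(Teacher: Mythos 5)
Your proposal is correct and takes essentially the same route as the paper: the paper itself omits the details of Lemma \ref{lemma3.7}, deferring to \cite[Lemma 4.1]{bartsch2004superlinear} and \cite[Lemma 2.1]{bartsch2005nodal}, whose argument is precisely your construction, namely a locally Lipschitz partition of unity giving $B(u)=\sum_j\lambda_j(u)A(w_j)$ on balls small enough that $\|A(u)-A(w_j)\|_E\leqslant\tfrac{1}{4}\|u-A(u)\|_E$ and that the error term is absorbed via Lemma \ref{lemma3.3}, with cone invariance from the strict contraction in the proof of Lemma \ref{lemma3.6} together with convexity of $P_{\varepsilon}^{\pm}$, and oddness by symmetrizing. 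Your verifications of (1)--(4), including the two-sided comparison in (2) from the $\tfrac14$-approximation and the symmetrization $\widetilde B(u)=\tfrac12\bigl(B(u)-B(-u)\bigr)$ preserving (1)--(3), are sound.
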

\begin{proof}
	The proof is similar to the proofs of \cite[Lemma 4.1]{bartsch2004superlinear} and \cite[Lemma 2.1]{bartsch2005nodal}. We omit the details. 
\end{proof}

\subsection{Existence of one sign-changing solution}
In this subsection, we introduce the critical point theorem \cite[theorem 2.4]{liu2015multiple}. In order to employ Theorem \ref{theorem3.1} to prove the existence of sign-changing solutions to problem \eqref{2.1}, we let $X$ be a Banach space, $J\in C^{1}(X,\mathbb{R}), P,Q\subset X$ be open sets, $M=P\cap Q$,
$\Sigma=\partial P\cap\partial Q$ and $W=P\cup Q$. For $c\in \mathbb{R}$, $K_{c}=\{x\in X: J(x)=c, J'(x)=0\}$ and
$J^{c}=\{x\in X: J(x)\leqslant c\}$.

\begin{definition}
	$\{P,Q\}$ is called an admissible family of invariant sets with respect to $J$ at level $c$ provided that the following deformation property holds: if $K_{c}\backslash W=\emptyset$, then, there exists $\varepsilon_{0}>0$ such that for $\varepsilon\in(0,\varepsilon_{0})$, there exists $\eta\in C(X,X)$, satisfying\\
	$(1)\;\eta(\overline P)\subset\overline P,\eta(\overline Q)\subset\overline Q;$\\
	$(2)\;\eta|_{J^{c-2\varepsilon}}=id;$\\
	$(3)\;\eta(J^{c+\varepsilon}\backslash W)\subset J^{c- \varepsilon}.$
\end{definition}

Now we introduce a critical point theorem (see \cite[theorem 2.4]{liu2015multiple}).
\newtheorem{thA}{Theorem}[section]
\begin{thA}\label{theorem3.1}
	Assume that $\{P,Q\}$  is an admissible family of invariant sets with respect to $J$ at any level $c\geqslant c_{*}:=\inf_{u\in \Sigma}J(u)$ and there exists a map $\varphi_{0}:\Delta\to X$ satisfying\\
	$(1)$\;$\varphi_{0}(\partial_{1}\Delta)\subset P$ and $\varphi_{0}(\partial_{2}\Delta)\subset Q,$\\
	$(2)$\;$\varphi_{0}(\partial_{0}\Delta)\cap M=\emptyset,$\\
	$(3)$\;$\sup_{u\in\varphi_{0}(\partial_{0}\Delta)}J(u)<c_{*},$\\
	where $\Delta=\{(t_{1},t_{2})\in\mathbb{R}^{2}:t_{1},t_{2}\geqslant0,t_{1}+t_{2}\leqslant1\},\partial_{1}\Delta=\{0\}\times[0,1],\partial_{2}\Delta=[0,1]\times\{0\}$ and $\partial_{0}\Delta=\{(t_{1},t_{2})\in\mathbb{R}^{2}:t_{1},t_{2}\geqslant0,t_{1}+t_{2}=1\}$. Define\\
	\[c=\inf_{\varphi\in\Gamma}\sup_{u\in\varphi(\Delta)\backslash W}J(u),\]
	where $\Gamma:=\{\varphi\in C(\Delta,X):\varphi(\partial_{1}\Delta)\subset P,\varphi(\partial_{2}\Delta)\subset Q,\varphi|_{\partial_{0}\Delta}=\varphi_{0}|_{\partial_{0}\Delta}\}$. Then $c\geqslant c_{*}$ is a critical value of $J$ and $K_{c}\backslash W\not=\emptyset$.
\end{thA}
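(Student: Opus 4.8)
The plan is to establish the two assertions separately: first that the minimax value $c$ satisfies $c\geq c_*$, and then, arguing by contradiction through the deformation property, that $c$ is a critical value with $K_c\setminus W\neq\emptyset$. Note first that $\varphi_0\in\Gamma$, so $\Gamma\neq\emptyset$, and since $\Delta$ is compact and $J$ continuous, $c<+\infty$ is well defined. Observe also that because $P,Q$ are open and $W=P\cup Q$, the set $\Sigma=\partial P\cap\partial Q$ is disjoint from $W$, i.e. $\Sigma\subset X\setminus W$; this is what allows $\Sigma$ to be compared with the sets $\varphi(\Delta)\setminus W$ appearing in the definition of $c$.

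For the bound $c\geq c_*$ I would prove the intersection (linking) lemma: every $\varphi\in\Gamma$ satisfies $\varphi(\Delta)\cap\Sigma\neq\emptyset$. To detect $\Sigma$ I introduce the continuous functions $s_P(u)=\mathrm{dist}(u,\overline P)-\mathrm{dist}(u,X\setminus P)$ and $s_Q(u)=\mathrm{dist}(u,\overline Q)-\mathrm{dist}(u,X\setminus Q)$, which are negative exactly on $P$ (resp. $Q$), positive off $\overline P$ (resp. $\overline Q$), and vanish exactly on $\partial P$ (resp. $\partial Q$); thus $u\in\Sigma$ iff $s_P(u)=s_Q(u)=0$. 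Setting $F=(s_P\circ\varphi,\,s_Q\circ\varphi):\Delta\to\mathbb{R}^2$, the boundary conditions defining $\Gamma$ become sign conditions: $F_1<0$ on $\partial_1\Delta$ (image in $P$), $F_2<0$ on $\partial_2\Delta$ (image in $Q$), while $\varphi_0(\partial_0\Delta)\cap M=\emptyset$ forces $F$ to avoid the open third quadrant on $\partial_0\Delta$. Tracking $F$ once around $\partial\Delta$ — starting at the vertex $(0,0)$, where $\varphi(0,0)\in M$ makes both components negative, and passing through $(1,0)$ and $(0,1)$, where exactly one component is nonnegative — shows that $F|_{\partial\Delta}$ has winding number $\pm1$ about the origin, so by Brouwer degree $F$ has a zero in $\Delta$ (any zero occurring on $\partial_0\Delta$ already lies in $\Sigma$). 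Hence $\varphi(\Delta)$ meets $\Sigma$, so $\sup_{u\in\varphi(\Delta)\setminus W}J(u)\geq\inf_{\Sigma}J=c_*$, and taking the infimum over $\Gamma$ gives $c\geq c_*$.

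For the critical-value statement I argue by contradiction, assuming $K_c\setminus W=\emptyset$. Since $\{P,Q\}$ is admissible at level $c$, there is $\varepsilon_0>0$ and, for each small $\varepsilon\in(0,\varepsilon_0)$, a map $\eta\in C(X,X)$ with the three deformation properties; I shrink $\varepsilon$ further so that $c-2\varepsilon>\sup_{\partial_0\Delta}J\circ\varphi_0$, which is possible by hypothesis $(3)$ of $\varphi_0$ together with $c\geq c_*$. Choosing $\varphi\in\Gamma$ nearly optimal, i.e. $\sup_{\varphi(\Delta)\setminus W}J<c+\varepsilon$, I set $\tilde\varphi=\eta\circ\varphi$ and check $\tilde\varphi\in\Gamma$: on $\partial_1\Delta$ and $\partial_2\Delta$ the invariance $\eta(P)\subset P$, $\eta(Q)\subset Q$ (the open-set strengthening of deformation property $(1)$, which the flow-generated $\eta$ enjoys) keeps the images in $P$ and $Q$, while on $\partial_0\Delta$ we have $\varphi=\varphi_0$ with $\varphi_0(\partial_0\Delta)\subset J^{c-2\varepsilon}$, on which $\eta=\mathrm{id}$ by deformation property $(2)$, so $\tilde\varphi|_{\partial_0\Delta}=\varphi_0|_{\partial_0\Delta}$.

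It remains to estimate $J$ on $\tilde\varphi(\Delta)\setminus W$. If $u=\eta(\varphi(t))\notin W$, then the invariance $\eta(P)\subset P$, $\eta(Q)\subset Q$ forces $\varphi(t)\notin W$ as well, whence $J(\varphi(t))<c+\varepsilon$, i.e. $\varphi(t)\in J^{c+\varepsilon}\setminus W$; deformation property $(3)$ then yields $u\in J^{c-\varepsilon}$, so $J(u)\leq c-\varepsilon$. Taking the supremum gives $\sup_{\tilde\varphi(\Delta)\setminus W}J\leq c-\varepsilon<c$, contradicting $c\leq\sup_{\tilde\varphi(\Delta)\setminus W}J$, which holds because $\tilde\varphi\in\Gamma$. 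This contradiction shows $K_c\setminus W\neq\emptyset$, so $c$ is a critical value. I expect the main obstacle to be the linking lemma — making the degree/winding computation on the triangle rigorous — together with the bookkeeping that $\eta$ preserves the open sets $P,Q$ (not merely their closures), which is exactly what is needed both for $\tilde\varphi\in\Gamma$ and for the deduction $\varphi(t)\notin W$.
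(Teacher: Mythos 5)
First, a structural point: the paper does not prove this statement at all. Theorem 3.1 is imported verbatim from \cite{liu2015multiple} (Theorem 2.4 there), so there is no internal proof to compare against; your attempt is in effect a reconstruction of the cited source's argument, and it follows the same two-step route that source takes: a Brouwer-degree linking lemma showing $\varphi(\Delta)\cap\Sigma\neq\emptyset$ for every $\varphi\in\Gamma$, hence $c\geqslant c_{*}$, followed by the standard contradiction in which the admissibility deformation $\eta$ is applied to a nearly optimal $\varphi$. Your degree step is sound and can be made rigorous exactly as you sketch: $s_{P},s_{Q}$ detect $\Sigma$ as the common zero set; the three boundary arcs of $F=(s_{P}\circ\varphi,s_{Q}\circ\varphi)$ lie respectively in the open left half-plane, the open lower half-plane, and the complement of the open third quadrant; each of these regions (the last one with the origin removed) is simply connected, so $F|_{\partial\Delta}$ is homotopic in $\mathbb{R}^{2}\setminus\{0\}$ to a model loop of winding number $\pm1$, and the degree is nonzero. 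One small simplification you missed: hypothesis $(3)$ together with $c_{*}=\inf_{\Sigma}J$ already forces $\varphi_{0}(\partial_{0}\Delta)\cap\Sigma=\emptyset$, so $F$ cannot vanish on $\partial_{0}\Delta$ and your separate treatment of boundary zeros is unnecessary.

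The one genuine soft spot is the one you flagged yourself: the deformation property in the definition of an admissible family gives only $\eta(\overline{P})\subset\overline{P}$ and $\eta(\overline{Q})\subset\overline{Q}$, whereas your argument uses the open-set invariance $\eta(P)\subset P$, $\eta(Q)\subset Q$ in two essential places: to conclude $\tilde\varphi=\eta\circ\varphi\in\Gamma$ (since $\Gamma$ requires images of $\partial_{1}\Delta,\partial_{2}\Delta$ in the \emph{open} sets $P,Q$), and to deduce from $\eta(\varphi(t))\notin W$ that $\varphi(t)\notin W$. With closure-invariance alone, $\varphi(t)\in P$ would still permit $\eta(\varphi(t))\in\partial P\setminus W$, a point at which $J$ is uncontrolled, so the estimate $\sup_{\tilde\varphi(\Delta)\setminus W}J\leqslant c-\varepsilon$ would not follow. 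In the abstract setting there is no flow, so "the flow-generated $\eta$ enjoys it" is not a licit justification; strictly, what you prove is the theorem with property $(1)$ of the admissibility definition strengthened to invariance of the open sets. This is the standard reading of the cited result (the same wrinkle is present in \cite{liu2015multiple}, and indeed in this paper's own verification, Lemma 3.8$(4)$, which also records only $\sigma(t,\overline{P_{\varepsilon}^{\pm}})\subset\overline{P_{\varepsilon}^{\pm}}$); the applications are safe because the field is built from $B$ with $B(\partial P_{\varepsilon}^{\pm})\subset P_{\varepsilon}^{\pm}$ (Lemmas 3.6--3.7), making the boundary strictly inward-pointing on the relevant energy range. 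So your proposal is correct modulo this hypothesis-strengthening, which should be stated as an assumption rather than attributed to an $\eta$ the abstract statement does not provide.
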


Now, we use Theorem \ref{theorem3.1} to prove the existence of a sign-changing solution to problem \eqref{2.1}, and for this, we take $X=E, P=P_{\varepsilon}^{+}, Q=P_{\varepsilon}^{-}$, and $J = I$. We will show that $\{P_{\varepsilon}^{+},P_{\varepsilon}^{-}\}$ is an admissible family of invariant sets for the functional $I$ at any level $c\in R$. Indeed, if $K_{c}\backslash W=\emptyset$, then $K_{c}\subset W$. Since $\mu>2p$ and $I$ satisfies the (PS)-condition, therefore $K_{c}$ is compact. Thus, $2\delta:=\text{dist}(K_{c},\partial W)>0$.

\begin{lem}\label{lemma3.8}
	If $K_{c}\backslash W=\emptyset$, there exists $\varepsilon_{0}>0$, such that for $0<\varepsilon<\varepsilon'<\varepsilon_{0}$, there exists a continuous
	map $\sigma:[0,1]\times E\to E$ satisfying\\
	$(1)$\;$\sigma(0,u)=u$ for $u\in E.$\\
	$(2)$\;$u\notin I^{-1}[c-\varepsilon',c+\varepsilon']$ for $t\in[0,1], \sigma(t,u)=u;$\\
	$(3)$\;$\sigma(1,I^{c+\varepsilon}\backslash W)\subset I^{c-\varepsilon};$\\
	$(4)$\;$\sigma(t,\overline{P_{\varepsilon}^{+}})\subset \overline{P_{\varepsilon}^{+}},\sigma(t,\overline{P_{\varepsilon}^{-}})\subset \overline{P_{\varepsilon}^{-}}$ for $t\in[0,1].$
\end{lem}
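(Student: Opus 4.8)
The plan is to construct the deformation $\sigma$ as the flow of a suitably modified negative pseudo-gradient vector field, following the standard deformation-lemma strategy adapted to the invariant-set framework of Bartsch--Liu--Weth. The starting point is the locally Lipschitz operator $B$ from Lemma \ref{lemma3.7}, whose associated vector field $W(u):=u-B(u)$ is a descent direction for $I$ by part $(3)$ of that lemma, and which respects the cones $P_\varepsilon^\pm$ by part $(1)$. First I would introduce cut-off functions to localize the flow. Since $K_c\backslash W=\emptyset$ gives $K_c\subset W$ and $2\delta:=\mathrm{dist}(K_c,\partial W)>0$, I would choose a locally Lipschitz function that vanishes on a neighborhood of $K_c$ and on $I^{c-2\varepsilon'}$, and equals one elsewhere, together with the annulus $I^{-1}[c-\varepsilon',c+\varepsilon']$ on which the flow is active; outside this energy band the vector field is switched off, which immediately yields property $(2)$.

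The core estimate is a quantitative lower bound on the descent rate away from critical points. On the region where $I(u)\in[c-\varepsilon',c+\varepsilon']$ and $u\notin W$, Lemma \ref{lemma3.5} guarantees $\|u-A(u)\|_E\geqslant\beta$ for some $\beta>0$, since there $\|I'(u)\|_{E^*}$ is bounded below (any point with small $I'$ would lie near $K_c\subset W$ by the (PS) condition established in Lemma \ref{lemma3.4}). Combining this with part $(2)$ of Lemma \ref{lemma3.7}, namely $\|u-A(u)\|_E\leqslant 2\|u-B(u)\|_E$, and with the descent inequality in part $(3)$, I obtain a uniform positive lower bound on $\langle I'(u),u-B(u)\rangle$ over the active region. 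I would then define
\[
\frac{d}{dt}\sigma(t,u)=-\chi(\sigma)\,\frac{W(\sigma)}{\|W(\sigma)\|_E},\qquad \sigma(0,u)=u,
\]
where $\chi$ is the cut-off; the Cauchy--Lipschitz theorem gives a unique global flow since the right-hand side is locally Lipschitz and bounded. Property $(1)$ is the initial condition, and $(3)$ follows by integrating the descent inequality: along the flow $I$ decreases at a rate bounded below by a positive constant as long as $\sigma$ stays in the band and outside $W$, so after unit time any point starting in $I^{c+\varepsilon}\backslash W$ is pushed below level $c-\varepsilon$, provided $\varepsilon$ is chosen small relative to the descent rate and to $\varepsilon'$.

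The property requiring the most care is $(4)$, the invariance of the closed cones $\overline{P_\varepsilon^+}$ and $\overline{P_\varepsilon^-}$ under the flow. The mechanism is that the vector field $-W(u)=B(u)-u$ points into $P_\varepsilon^\pm$ on the boundary $\partial P_\varepsilon^\pm$, which is the content of part $(1)$ of Lemma \ref{lemma3.7}; the standard argument is to show $\mathrm{dist}(\sigma(t,u),P^\pm)$ cannot increase past $\varepsilon$, using that $B(\partial P_\varepsilon^\pm)\subset P_\varepsilon^\pm$ forces the flow to stay on the correct side of the boundary. Here the oddness in part $(4)$ of Lemma \ref{lemma3.7} also ensures $P_\varepsilon^-=-P_\varepsilon^+$ is treated symmetrically. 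I expect this cone-invariance step to be the main obstacle, since it is precisely where the subtle interplay between the pseudo-gradient direction and the geometry of the positive/negative cones enters, and it must be verified that the cut-off $\chi$ does not destroy the inward-pointing property on $\partial P_\varepsilon^\pm$. Since the construction parallels \cite[Lemma 2.1]{bartsch2005nodal} and the deformation lemmas in \cite{liu2015multiple,liu2016infinitely}, the remaining verifications are routine once these estimates are in place.
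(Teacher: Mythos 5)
Your proposal follows essentially the same route as the paper's proof: a locally Lipschitz cut-off applied to the field $u-B(u)$ from Lemma \ref{lemma3.7}, the lower bound $\|u-A(u)\|_{E}\geqslant\beta$ from Lemma \ref{lemma3.5} (resting on the (PS) condition of Lemma \ref{lemma3.4}) to quantify the descent on the band $I^{-1}[c-\varepsilon',c+\varepsilon']$ away from $N_{\delta/2}(K_{c})$, and cone invariance deduced from $B(\partial P_{\varepsilon}^{\pm})\subset P_{\varepsilon}^{\pm}$. The only differences are cosmetic or easily repaired: the paper integrates the unnormalized field $-g(\tau)\bigl(\tau-B(\tau)\bigr)$ and reparametrizes time by a factor $\theta$ rather than normalizing the field and running unit time, and it arranges (by shrinking $\varepsilon_{0}$ and using (PS)) that the cut-off vanishes on a neighborhood of all of $K$, not merely of $K_{c}$ --- a point your sketch should also include, since $B$ is defined only on $E\setminus K$ and critical points at levels in $[c-\varepsilon',c+\varepsilon']$ other than $c$ would otherwise leave your normalized field undefined.
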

\begin{proof}
	For $G\subset E$ and $a>0$, let $N_{a}(G):=\{u\in E,\text{dist}(u,G)<a\}$. Since $K_{c}\backslash W=\emptyset$,  $K_{c}\subset W$. $W$ is an open set, then $N_{\delta}(K_{c})\subset W$. By Lemma \ref{lemma3.4}, we prove that $I(u)$ satisfies the (PS) condition, there exist $\varepsilon_{0},\alpha>0$, such that
	\begin{equation}
		\|I'(u)\|_{E^{*}}\geqslant\alpha\;\;\text{for}\;\;u\in I^{-1}[c-\varepsilon_{0},c+\varepsilon_{0}]\backslash N_{\frac{\delta}{2}}(K_{c}).\label{3.19}\tag{3.19}
	\end{equation}
	By Lemma \ref{lemma3.5}, there exists $\beta>0$ such that
	\begin{equation}
		\|u-A(u)\|_{E}\geqslant\beta\;\;\text{for}\;\;u\in I^{-1}[c-\varepsilon_{0},c+\varepsilon_{0}]\backslash N_{\frac{\delta}{2}}(K_{c}).\label{3.20}\tag{3.20}
	\end{equation}
	Since $I\in C^{1}(E,\mathbb{R})$, then for $\tau(s,u)\in N_{\frac{\delta}{2}}(K_{c})$, there exists $L>0$ such that
\begin{equation}
	\|\tau(s,u)\|_{E}+\|A\bigl(\tau(s,u)\bigr)\|_{E}\leqslant L.\label{3.21}\tag{3.21}
\end{equation}
	Without loss of generality, assume that $0<\varepsilon_{0}\leqslant\min\{\frac{a_{1}\beta\delta L^{p-2}}{16},\frac{\beta^{p-1}a_{1}\delta}{8}\}$. Take a cut-off function $g:E\to[0,1]$, which is locally Lipschitz continuous, such that
	\[g(u)=
	\begin{cases}
		0,&\text{if}\;u\notin I^{-1}[c-\varepsilon',c+\varepsilon']\;\text{or}\;u\in N_{\frac{\delta}{4}}(K_{c}),\\
		1,&\text{if}\;u\in I^{-1}[c-\varepsilon,c+\varepsilon]\;\text{and}\;u\notin N_{\frac{\delta}{2}}(K_{c}).\label{3.22}\tag{3.22}
	\end{cases}
	\]
	Decreasing $\varepsilon_{0}$ if necessary, one may find a  $\nu>0$ such that $I^{-1}[c-\varepsilon_{0},c+\varepsilon_{0}]\cap N_{\nu}(K)\subset N_{\frac{\delta}{4}}(K_{c})$, and this can be seen as a consequence of the (PS) condition. Thus $g(u)=0$ for any $u\in N_{\nu}(K)$.
	
	For $u\in E$, we consider the following initial value problem
	\[
	\begin{cases}
		\frac{\mathrm{d}\tau}{\mathrm{d}t}=-g(\tau)\bigl(\tau-B(\tau)\bigr),\\
		\tau(0,u)=u.\label{3.23}\tag{3.23}
	\end{cases}
	\]
	By Lemma \ref{lemma3.4} and Lemma \ref{lemma3.7}, $g(\tau)\bigl(\tau-B(\tau)\bigr)$ is locally Lipschitz continuous on $E$. For any $u\in E$, by existence and uniqueness theorem, there exists a unique solution $\tau(\cdot,u)\in C(\mathbb{R}^{+},E)$ for equation \eqref{4.1}, and $\tau(t,u)$ is uniformly continuous for $u$ on $[0,1]$. Define $\sigma(t,u)=\tau(\theta t,u)$, where $\theta=\max\{\frac{6\varepsilon L^{2-p}}{a_{1}\beta^{2}},\frac{6\varepsilon}{a_{1}\beta^{p}}\}$.
	
	By the proof of existence and uniqueness theorem, conclusion $(1)$ and conclusion $(2)$ are satisfied.
	We let $u\in I^{c+\varepsilon}\backslash W$. If $1<p\leqslant2$, without loss of generality, we assume $\tau(t,u)\not\equiv A\bigl(\tau(t,u)\bigr)$. By Lemma \ref{lemma3.7}, we have
	\begin{align}
		\frac{\mathrm{d}I\bigl(\tau(t,u)\bigr)}{\mathrm{d}t}&=\bigl\langle I'\bigl(\tau(t,u)\bigr),\tau'(t,u)\bigr\rangle\notag\\
		&=-g\bigl(\tau(t,u)\bigr)\bigl\langle I'\bigl(\tau(t,u)\bigr),\tau(t,u)-B\bigl(\tau(t,u)\bigr)\bigr\rangle\notag\\
		&\leqslant -\frac{1}{2}a_{1}g\bigl(\tau(t,u)\bigr)\|\tau(t,u)-A\bigl(\tau(t,u)\bigr)\|_{E}^{2}\bigl(\|\tau(t,u)\|_{E}+\|A\bigl(\tau(t,u)\bigr)\|_{E}\|\bigr)^{p-2}\notag\\
		&\leqslant 0.\notag
	\end{align}
	Thus we get that $I\bigl(\tau(t,u)\bigr)$ is decreasing for $t\geqslant0$. Moreover,
	\[I\bigl(\tau(t,u)\bigr)\leqslant I\bigl(\tau(0,u)\bigr)=I(u)\leqslant c+\varepsilon,\;\;\text{for}\;\;(t,u)\in[0,1]\times E.\]
	
	We claim that $I\bigl(\tau(t_{0},u)\bigr)<c-\varepsilon$ for any $t_{0}\in [0,\theta]$. Assume on the contrary that for any $t\in [0,\theta],I\bigl(\tau(t,u)\bigr)\geqslant c-\varepsilon$. Then $\tau(t,u)\in I^{-1}[c-\varepsilon,c+\varepsilon]$ for $t\in [0,\theta]$. We will show that for any $t\in [0,\theta],\tau(t,u)\notin N_{\frac{\delta}{2}}(K_{c})$. If, there exists $t_{1}\in [0,\theta]$, such that $\tau(t_{1},u)\in N_{\frac{\delta}{2}}(K_{c})$. Since $u\in I^{c+\varepsilon}\backslash W$ and $N_{\delta}(K_{c})\subset W$, $u\notin N_{\delta}(K_{c})$. By Lemma \ref{lemma3.7}, we obtain
	\begin{align}
		\frac{\delta}{2}&\leqslant\|\tau(t_{1},u)-u\|_{E}\leqslant\|\tau(t_{1},u)-\tau(0,u)\|_{E}=\|\int_{0}^{t_{1}}\tau'(s,u)\mathrm{d}s\|_{E}\leqslant\int_{0}^{t_{1}}\|\tau'(s,u)\|_{E}\mathrm{d}s\notag\\
		&=\int_{0}^{t_{1}}\|\tau(s,u)-B\bigl(\tau(s,u)\bigr)\|_{E}\mathrm{d}s\leqslant2\int_{0}^{t_{1}}\|\tau(s,u)-A\bigl(\tau(s,u)\bigr)\|_{E}\mathrm{d}s.\label{3.24}\tag{3.24}
	\end{align}
	If $1<p\leqslant2$, combining \eqref{3.20}, \eqref{3.21}, \eqref{3.24} and Lemma \ref{lemma3.7}, we have
	\begin{align}
		\frac{\delta}{2}&\leqslant\frac{2}{\beta} L^{2-p}\int_{0}^{t_{1}}\|\tau(s,u)-A\bigl(\tau(s,u)\bigr)\|_{E}^{2}\bigl(\|\tau(s,u)\|_{E}+\|A\bigl(\tau(s,u)\bigr)\|_{E}\bigr)^{p-2}\mathrm{d}s\notag\\
		&\leqslant\frac{4}{a_{1}\beta}L^{2-p}\int_{0}^{t_{1}}\langle I'\bigl(\tau(s,u)\bigr),\tau(s,u)-B\bigl(\tau(s,u)\bigr)\rangle \mathrm{d}s\notag\\
		&=\frac{4}{a_{1}\beta}L^{2-p}\biggl(I\bigl(\tau(t_{1},u)\bigr)-I\bigl(\tau(0,u)\bigr)\biggr)\notag\\
		&\leqslant\frac{8\varepsilon L^{2-p}}{a_{1}\beta}.\notag
	\end{align}
	Hence,
	\[\varepsilon\geqslant\frac{a_{1}\beta\delta L^{p-2}}{16}.\]
	This is a contradiction with $0<\varepsilon<\varepsilon_{0}\leqslant\min\{\frac{a_{1}\beta\delta L^{p-2}}{16},\frac{\beta^{p-1}a_{1}\delta}{8}\}$. Then for $t\in [0,\theta],\tau(t,u)\notin N_{\frac{\delta}{2}}(K_{c})$. Thus $g\bigl(\tau(s,u)\bigr)\equiv1$ for $t\in [0,\theta]$.
	Moreover, from Lemma \ref{lemma3.7}, if $1<p\leqslant2$, we obtain
	\begin{align}
		I\bigl(\tau(\theta,u)\bigr)&=I\bigl(\tau(0,u)\bigr)+\int_{0}^{\theta}\frac{\mathrm{d}I\bigl(\tau(s,u)\bigr)}{\mathrm{d}s}\mathrm{d}s\notag\\
		&=I(u)+\int_{0}^{\theta}\langle I'\bigl(\tau(s,u)\bigr),\tau'(s,u)\rangle\mathrm{d}s\notag\\
		&=I(u)-\int_{0}^{\theta}\langle I'\bigl(\tau(s,u)\bigr),\tau(s,u)-B\bigl(\tau(s,u)\bigr)\rangle\mathrm{d}s\notag\\
		&\leqslant I(u)-\int_{0}^{\theta}\frac{a_{1}}{2}\|\tau(s,u)-A\bigl(\tau(s,u)\bigr)\|_{E}^{2}\bigl(\|\tau(s,u)\|_{E}+\|A\bigl(\tau(s,u)\bigr)\|_{E}\bigr)^{p-2}\mathrm{d}s.\notag
	\end{align}
	Here we use \eqref{3.20} and \eqref{3.24}, 
	\begin{align}
		I\bigl(\tau(\theta,u)\bigr)&\leqslant c+\varepsilon-\theta\frac{a_{1}c_{1}}{2}\beta^{2}L^{p-2}\notag\\
		&\leqslant c-2\varepsilon.\notag
	\end{align}
	This is a contradiction with $I\bigl(\tau(t,u)\bigr)\geqslant c-\varepsilon$ for $t\in[0,\theta]$. Therefore, if $1<p\leqslant2$, there exists $t_{0}\in [0,\theta]$ such that $I\bigl(\tau(t_{0},u)\bigr)<c-\varepsilon$.\\
	If $p\geqslant2$, using Lemma \ref{lemma3.7}, one sees that
	\begin{align}
		\frac{\mathrm{d}I\bigl(\tau(t,u)\bigr)}{\mathrm{d}t}&=\bigl\langle I'\bigl(\tau(t,u)\bigr),\tau'(t,u)\bigr\rangle\notag\\
		&=-g\bigl(\tau(t,u)\bigr)\bigl\langle I'\bigl(\tau(t,u)\bigr),\tau(t,u)-B\bigl(\tau(t,u)\bigr)\bigr\rangle\notag\\
		&\leqslant -\frac{1}{2}a_{1}g\bigl(\tau(t,u)\bigr)\|\tau(t,u)-A\bigl(\tau(t,u)\bigr)\|_{E}^{p}\notag\\
		&\leqslant 0.\notag
	\end{align}
	Similarly, if $p\geqslant2$, using Lemma \ref{lemma3.7} again, we have
	\begin{align}
		\frac{\delta}{2}&\leqslant\int_{0}^{t_{1}}\|\tau(s,u)-A\bigl(\tau(s,u)\bigr)\|_{E}^{1-p}\|\tau(s,u)-A\bigl(\tau(s,u)\bigr)\|_{E}^{p}\mathrm{d}s\notag\\
		&\leqslant\beta^{1-p}\frac{2}{a_{1}}\int_{0}^{t_{1}}\langle I'\bigl(\tau(s,u)\bigr),\tau(s,u)-B\bigl(\tau(s,u)\bigr)\rangle \mathrm{d}s\notag\\
		&\leqslant\beta^{1-p}\frac{2}{a_{1}}2\varepsilon,\notag
	\end{align}
	which further implies that
	\[\varepsilon\geqslant\frac{a_{1}\beta^{p-1}\delta}{8}.\]
	Similarly, it shows a contradiction. Then for $t\in [0,\theta],\tau(t,u)\notin N_{\frac{\delta}{2}}(K_{c})$. Therefore, $g\bigl(\tau(s,u)\bigr)\equiv1$ for $t\in [0,\theta]$.
	If $p\geqslant2$, by Lemma \ref{lemma3.7}, we get
	\begin{align}
		I\bigl(\tau(\theta,u)\bigr)&=I\bigl(\tau(o,u)\bigr)+\int_{0}^{\theta}\frac{dI\bigl(\tau(s,u)\bigr)}{\mathrm{d}s}\mathrm{d}s\notag\\
		&=I(u)+\int_{0}^{\theta}\langle I'\bigl(\tau(s,u)\bigr),\tau'(s,u)\rangle \mathrm{d}s\notag\\
		&=I(u)-\int_{0}^{\theta}\langle I'\bigl(\tau(s,u)\bigr),\tau(s,u)-B\bigl(\tau(s,u)\bigr)\rangle \mathrm{d}s\notag\\
		&\leqslant I(u)-\int_{0}^{\theta}\frac{a_{1}}{2}\|\tau(s,u)-A\bigl(\tau(s,u)\bigr)\|_{E}^{p}\mathrm{d}s\notag\\
		&\leqslant c+\varepsilon-\theta\frac{a_{1}}{2}\beta^{p}\notag\\
		&\leqslant c-2\varepsilon.\notag
	\end{align}
	This is a contradiction with $I\bigl(\tau(t,u)\bigr)\geqslant c-\varepsilon$ for $t\in[0,\theta]$. Therefore, if $p\geqslant2$, there exists $t_{0}\in [0,\theta]$ such that $I\bigl(\tau(t_{0},u)\bigr)<c-\varepsilon$.  Furthermore, the conclusion $(3)$ is satisfied. Arguing as the
	proof of Lemma 2.1 in \cite{liu2001invariant}, the conclusion $(4)$ is a consequence of $(1)$ of Lemma \ref{lemma3.2}.
\end{proof}

\begin{corollary}
	$\{P_{\varepsilon}^{+},P_{\varepsilon}^{-}\}$ is an admissible family of invariant sets for the functional $I$ at any
	level $c\in\mathbb{R}$. 
\end{corollary}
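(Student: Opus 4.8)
The plan is to recognize that this corollary is essentially a repackaging of Lemma \ref{lemma3.8}: the deformation $\eta$ demanded by the definition of an admissible family will be taken to be the time-one map $\eta(\cdot):=\sigma(1,\cdot)$ of the descending flow constructed there. With the identifications $X=E$, $P=P_{\varepsilon}^{+}$, $Q=P_{\varepsilon}^{-}$, $W=P_{\varepsilon}^{+}\cup P_{\varepsilon}^{-}$ and $J=I$, it suffices to produce, whenever $K_{c}\setminus W=\emptyset$, a single $\varepsilon_{0}>0$ so that for every $\varepsilon\in(0,\varepsilon_{0})$ the map $\eta$ fulfils the three conditions $(1)$--$(3)$ in the definition of an admissible family.

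First I would record that the hypotheses of Lemma \ref{lemma3.8} are in force. Since $\mu>2p$, Lemma \ref{lemma3.4} guarantees that $I$ satisfies the (PS) condition, so when $K_{c}\setminus W=\emptyset$ the critical set $K_{c}\subset W$ is compact and $2\delta:=\mathrm{dist}(K_{c},\partial W)>0$, exactly as observed just before Lemma \ref{lemma3.8}. Invoking that lemma yields $\varepsilon_{0}>0$ and, for each pair $0<\varepsilon<\varepsilon'<\varepsilon_{0}$, a continuous flow $\sigma:[0,1]\times E\to E$ enjoying properties $(1)$--$(4)$. I then set $\eta=\sigma(1,\cdot)$, which is continuous on $E$.

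It remains to translate the four flow properties into the three deformation conditions. Condition $(1)$, namely $\eta(\overline{P_{\varepsilon}^{\pm}})\subset\overline{P_{\varepsilon}^{\pm}}$, is precisely property $(4)$ of Lemma \ref{lemma3.8} evaluated at $t=1$; condition $(3)$, namely $\eta(I^{c+\varepsilon}\setminus W)\subset I^{c-\varepsilon}$, is property $(3)$ verbatim. The only point needing a small adjustment is condition $(2)$, $\eta|_{I^{c-2\varepsilon}}=\mathrm{id}$, because Lemma \ref{lemma3.8} freezes points lying \emph{outside} $I^{-1}[c-\varepsilon',c+\varepsilon']$ rather than points in the sublevel set $I^{c-2\varepsilon}$. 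I would reconcile these by choosing $\varepsilon'$ in the nonempty interval $(\varepsilon,\min\{2\varepsilon,\varepsilon_{0}\})$, which is admissible since $\varepsilon<\varepsilon_{0}$: then any $u$ with $I(u)\leqslant c-2\varepsilon$ satisfies $I(u)<c-\varepsilon'$, hence $u\notin I^{-1}[c-\varepsilon',c+\varepsilon']$, and property $(2)$ of Lemma \ref{lemma3.8} forces $\sigma(t,u)=u$ for all $t\in[0,1]$, in particular $\eta(u)=u$.

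I do not expect a genuine obstacle here, since all the analytic effort—the construction of the locally Lipschitz field $B$ (Lemma \ref{lemma3.7}), the quantitative estimates \eqref{3.19}--\eqref{3.21}, and the integration of the flow—has already been carried out in Lemma \ref{lemma3.8}. The sole subtlety is the bookkeeping of the two thresholds $c-2\varepsilon$ and $\varepsilon'$, which is settled by the choice $\varepsilon'\in(\varepsilon,\min\{2\varepsilon,\varepsilon_{0}\})$ above. Since nothing in the construction placed any restriction on $c$, with $P=P_{\varepsilon}^{+}$ and $Q=P_{\varepsilon}^{-}$ the admissibility then holds at every level $c\in\mathbb{R}$, which is the assertion of the corollary.
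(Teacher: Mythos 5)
Your proposal is correct and follows the same route as the paper, which proves the corollary simply by citing Lemma \ref{lemma3.8}; you merely make explicit the details the paper leaves implicit, namely taking $\eta=\sigma(1,\cdot)$ and choosing $\varepsilon'\in(\varepsilon,\min\{2\varepsilon,\varepsilon_{0}\})$ so that $I^{c-2\varepsilon}\subset E\backslash I^{-1}[c-\varepsilon',c+\varepsilon']$, which correctly reconciles the freezing condition of the flow with the requirement $\eta|_{I^{c-2\varepsilon}}=\mathrm{id}$.
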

\begin{proof}
	The conclusion follows from Lemma \ref{lemma3.8}.
\end{proof}

\begin{lem}\label{lemma3.9}
	For $q\in [2,6]$, there exists $m_{q}>0$ independent of $\varepsilon$ such that $\|u\|_{q}\leqslant m_{q}\varepsilon$ for
	$u\in M=P_{\varepsilon}^{+}\cap P_{\varepsilon}^{-}$.
\end{lem}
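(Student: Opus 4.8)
The plan is to reduce the estimate entirely to the cone--distance inequality that was already isolated inside the proof of Lemma \ref{lemma3.6}, namely that for every admissible exponent $q$ there is $m_q>0$ with $\|u^{\pm}\|_{q}\leqslant m_{q}\,\text{dist}(u,P^{\mp})$, and then to use the defining feature of $M$: a point of $M$ is \emph{simultaneously} close (in $E$) to the positive cone and to the negative cone.

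First I would recall the decomposition $u=u^{+}-u^{-}$ with $u^{+}=\max\{u,0\}$ and $u^{-}=\max\{-u,0\}$, and note the elementary $L^{q}$ fact that the best approximation of $u$ from $P^{-}$ in the $L^{q}$ norm is $\min\{u,0\}$, so that $\|u^{+}\|_{q}=\inf_{w\in P^{-}}\|u-w\|_{q}$, and symmetrically $\|u^{-}\|_{q}=\inf_{w\in P^{+}}\|u-w\|_{q}$. Because the embedding $E\hookrightarrow L^{q}$ is continuous (Remark \ref{remark2.2}) with a constant that depends only on $q$ and on the fixed data $V,p$ but not on $u$, one obtains
\[
\|u^{+}\|_{q}\leqslant m_{q}\,\text{dist}(u,P^{-}),\qquad \|u^{-}\|_{q}\leqslant m_{q}\,\text{dist}(u,P^{+}),
\]
which are exactly the inequalities invoked in Lemma \ref{lemma3.6}, with $m_q$ independent of $\varepsilon$.

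Next I would feed in the definition of $M$. If $u\in M=P_{\varepsilon}^{+}\cap P_{\varepsilon}^{-}$, then by construction $\text{dist}(u,P^{+})<\varepsilon$ and $\text{dist}(u,P^{-})<\varepsilon$ hold at the same time, so the two displayed inequalities give $\|u^{+}\|_{q}<m_{q}\varepsilon$ and $\|u^{-}\|_{q}<m_{q}\varepsilon$ simultaneously. Since $u^{+}$ and $u^{-}$ have disjoint supports, $\|u\|_{q}^{q}=\|u^{+}\|_{q}^{q}+\|u^{-}\|_{q}^{q}$ (or, more crudely, $\|u\|_{q}\leqslant\|u^{+}\|_{q}+\|u^{-}\|_{q}$ by the triangle inequality), whence $\|u\|_{q}\leqslant 2^{1/q}m_{q}\varepsilon$; after relabelling the constant this is the assertion.

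There is no genuine analytic obstacle: the whole content is the reduction to the cone--distance bound of Lemma \ref{lemma3.6}. The one point that must be checked explicitly is the \emph{uniformity} of the constant, i.e.\ that $m_{q}$ depends only on $q$ (and the fixed $V,p$) and not on $u$ or $\varepsilon$; this is automatic, since it is nothing but the Sobolev embedding constant of $E\hookrightarrow L^{q}$, so the dependence on $\varepsilon$ enters only through the two distance bounds and yields the stated linear control $\|u\|_{q}\leqslant m_{q}\varepsilon$.
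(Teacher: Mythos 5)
Your proof is correct and takes exactly the route the paper intends: the paper's proof of Lemma \ref{lemma3.9} is merely the pointer ``this follows from the proof of Lemma \ref{lemma3.6}'', whose key ingredient is precisely the cone--distance bound $\|u^{\pm}\|_{q}\leqslant m_{q}\,\text{dist}(u,P^{\mp})$ that you rederive via $L^{q}$-best approximation and the embedding $E\hookrightarrow L^{q}$, and then apply to both cones simultaneously at a point of $M$. Your write-up in fact supplies the details (the identification of the $L^{q}$-nearest point in $P^{\mp}$, the uniformity of $m_{q}$, and the final triangle-inequality step) that the paper leaves implicit.
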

\begin{proof}
	This follows from the proof of Lemma \ref{lemma3.6}. 
\end{proof}

\begin{lem}\label{lemma3.10}
	If $\varepsilon>0$ is small enough, then $I(u)\geqslant\frac{1}{p}\varepsilon^{p}$ for $u\in \Sigma=\partial P_{\varepsilon}^{+}\cap\partial P_{\varepsilon}^{-}$, that is, $c_{*}\geqslant\frac{1}{p}\varepsilon^{p}$.
\end{lem}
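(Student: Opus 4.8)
The plan is to discard the (nonnegative) Poisson contribution, control $\int F(u)$ by the subcritical estimate \eqref{3.6}, and then use the defining property of $\Sigma$ to pin the norm $\|u\|_{E}$ down from below while bounding the $L^{s}$-norms of $u$ from above. First, since $\phi_{u}\geqslant0$ by Lemma \ref{lemma2.1}(1), the term $\frac{1}{2p}\int_{\mathbb{R}^{3}}\phi_{u}|u|^{p}$ is nonnegative, so
\[I(u)\geqslant\frac{1}{p}\|u\|_{E}^{p}-\int_{\mathbb{R}^{3}}F(u).\]
Integrating \eqref{3.6} gives, for each $\delta>0$, a constant $C_{\delta}>0$ with $|F(t)|\leqslant\frac{\delta}{p}|t|^{p}+\frac{C_{\delta}}{q}|t|^{q}$, whence $\int_{\mathbb{R}^{3}}F(u)\leqslant\frac{\delta}{p}\|u\|_{p}^{p}+\frac{C_{\delta}}{q}\|u\|_{q}^{q}$.

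Next I would exploit the geometry of $\Sigma$. For $u\in\Sigma$ one has $\text{dist}(u,P^{+})=\text{dist}(u,P^{-})=\varepsilon$. Testing the first infimum with the competitor $u^{+}\in P^{+}$ (note $u^{+}\in E$ since $\int_{\mathbb{R}^{3}}V|u^{+}|^{p}\leqslant\int_{\mathbb{R}^{3}}V|u|^{p}<\infty$) and the second with $-u^{-}\in P^{-}$ yields $\varepsilon\leqslant\|u^{-}\|_{E}$ and $\varepsilon\leqslant\|u^{+}\|_{E}$, using $u-u^{+}=-u^{-}$ and $u-(-u^{-})=u^{+}$. Because $u^{+},u^{-}$ and their gradients are supported on a.e.\ disjoint sets, $\|u\|_{E}^{p}=\|u^{+}\|_{E}^{p}+\|u^{-}\|_{E}^{p}\geqslant2\varepsilon^{p}$. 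On the other hand, the claim established in the proof of Lemma \ref{lemma3.6}, namely $\|u^{\pm}\|_{s}\leqslant m_{s}\,\text{dist}(u,P^{\mp})$ for $s\in[p,p^{*})$, gives $\|u\|_{s}^{s}=\|u^{+}\|_{s}^{s}+\|u^{-}\|_{s}^{s}\leqslant2m_{s}^{s}\varepsilon^{s}$ for both $s=p$ and $s=q$.

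Combining these estimates yields
\[I(u)\geqslant\Bigl(\frac{2}{p}-\frac{2\delta m_{p}^{p}}{p}\Bigr)\varepsilon^{p}-\frac{2C_{\delta}m_{q}^{q}}{q}\varepsilon^{q}.\]
I would first fix $\delta$ so small that $\frac{2\delta m_{p}^{p}}{p}\leqslant\frac{1}{2p}$, so the coefficient of $\varepsilon^{p}$ is at least $\frac{3}{2p}$; then, since $q>p$, shrink $\varepsilon$ so that $\frac{2C_{\delta}m_{q}^{q}}{q}\varepsilon^{q-p}\leqslant\frac{1}{2p}$. This gives $I(u)\geqslant\frac{3}{2p}\varepsilon^{p}-\frac{1}{2p}\varepsilon^{p}=\frac{1}{p}\varepsilon^{p}$ for every $u\in\Sigma$, and taking the infimum over $\Sigma$ yields $c_{*}\geqslant\frac{1}{p}\varepsilon^{p}$.

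The main obstacle is the lower bound $\|u\|_{E}^{p}\geqslant2\varepsilon^{p}$: it rests on realizing both distance infima by the natural competitors $u^{+}$ and $-u^{-}$, together with the splitting $\|u\|_{E}^{p}=\|u^{+}\|_{E}^{p}+\|u^{-}\|_{E}^{p}$, which is valid precisely because the positive and negative parts (and their gradients) live on disjoint supports so the $p$-integrands add. Once this is in hand, the remaining work is the routine ordered choice of $\delta$ and then $\varepsilon$, exploiting $q>p$ to absorb the higher-order term.
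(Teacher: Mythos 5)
Your proposal is correct and follows essentially the same route as the paper's proof: discard the nonnegative term $\frac{1}{2p}\int_{\mathbb{R}^{3}}\phi_{u}|u|^{p}$, split $\|u\|_{E}^{p}=\|u^{+}\|_{E}^{p}+\|u^{-}\|_{E}^{p}\geqslant2\varepsilon^{p}$ using $\|u^{\pm}\|_{E}\geqslant\operatorname{dist}(u,P^{\mp})=\varepsilon$, bound $\int_{\mathbb{R}^{3}}F(u)$ via the $(f_{1})$--$(f_{2})$ estimate together with $\|u^{\pm}\|_{s}\leqslant m_{s}\operatorname{dist}(u,P^{\mp})$ (the content of Lemma \ref{lemma3.9}), and absorb the $\varepsilon^{q}$ term using $q>p$. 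Your write-up is if anything more careful than the paper's (explicit competitor argument for the distance bounds, explicit tracking of the factors of $2$, and the ordered choice of $\delta$ before $\varepsilon$ rather than hard-coding $\delta=\tfrac{1}{2m_{p}^{p}}$ into \eqref{3.25}), but these are cosmetic refinements of the same argument.
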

\begin{proof}
	For any fixed $u\in \Sigma=\partial P_{\varepsilon}^{+}\cap\partial P_{\varepsilon}^{-}$, we have $\|u^{\pm}\|_{E}\geqslant\text{dist}(u,P^{\mp})=\varepsilon$. By $(f_{1})-(f_{2})$, one sees that
	\[F(t)\leqslant \frac{1}{2pm_{p}^{p}}|t|^{p}+C|t|^{q},\qquad\forall t\in\mathbb{R}.\label{3.25}\tag{3.25}\]
	Note that
	\begin{align}
		I(u)=\frac{1}{p}\bigl(\|u^{+}\|_{E}^{p}+\|u^{-}\|_{E}^{p}\bigr)+\frac{1}{2p}\int_{\mathbb{R}^{3}}\phi_{u}|u|^{p}-\int_{\mathbb{R}^{3}}F(u).\label{3.26}\tag{3.26}
	\end{align}
	Combining $q>p$ and \eqref{3.25}, we obtin
	\begin{align}
		I(u)&\geqslant \frac{1}{p}\bigl(\|u^{+}\|_{E}^{p}+\|u^{-}\|_{E}^{p}\bigr)-\int_{\mathbb{R}^{3}}\biggl(\frac{1}{2pm_{p}^{p}}|u|^{p}+C|u|^{q}\biggr)\notag\\
		&\geqslant \frac{2}{p}\varepsilon^{p}-\frac{1}{2pm_{p}^{p}}m_{p}^{p}\varepsilon^{p}-C m_{q}^{q}\varepsilon^{q}\notag\\
		&\geqslant\frac{1}{p}\varepsilon^{p}.\label{3.27}\tag{3.27}
	\end{align}
\end{proof}
Next we will prove Theorem \ref{theorem1.1}.
\begin{proof}
	\textbf{(Existence part)} In the following, we will construct $\varphi_{0}$ satisfying the hypotheses in Theorem \ref{theorem3.1}. Choose $v_{1},v_{2}\in C_{0}^{\infty}(\mathbb{R}^{3})\setminus\{0\}$, satisfying $\text{supp}(v_{1})\cap \text{supp}(v_{2})=\emptyset$ and $v_{1}\leqslant 0,v_{2}\geqslant 0$. For $(t,s)\in\Delta$, Let $\varphi_{0}(t,s):=R(tv_{1}+sv_{2})$, where $R$  is a positive constant to be determined later. Obviously, for $t,s\in[0,1],\varphi_{0}(0,s)=Rsv_{2}\in P_{\varepsilon}^{+}$ and $\varphi_{0}(t,0)=Rtv_{1}\in P_{\varepsilon}^{-}$. Therefore, the condition $(1)$ in Theorem \ref{theorem3.1} is satisfied.
	
	Now we show that $\varphi_{0}(\partial_{0}\Delta)\cap M=\emptyset$ for $R$ large enough.
	Let $\rho=\min\{\|tv_{1}+(1-t)v_{2}\|_{p}:0\leqslant t\leqslant1\}>0$, for all $u\in\varphi_{0}(\partial_{0}\Delta)$,
	\[\|u\|_{p}=R\|tv_{1}+(1-t)v_{2}\|_{p}\geqslant R\min \|tv_{1}+(1-t)v_{2}\|_{p}=\rho R.\]
	If $\varphi_{0}(\partial_{0}\Delta)\cap M\not=\emptyset$, we choose $u\in \varphi_{0}(\partial_{0}\Delta)\cap M$. By Lemma \ref{lemma3.9}, there exists $\|u\|_{p}\leqslant m_{p}\varepsilon$ for $\varepsilon$ small enough, however we have $\|u\|_{p}\geqslant \rho R$ for $R$ large enough, which is a contradiction. Therefore, the condition $(2)$ in Theorem \ref{theorem3.1} is satisfied.
	
	By $(f_{3})$, we have $F(t)\geqslant C_{1}|t|^{\mu}-C_{2}$ for any $t\in \mathbb{R}$. Notice that for any $u\in \varphi_{0}(\partial_{0}\Delta)$, we get 
	\begin{align}
		I(u)=&\frac{1}{p}\|u\|_{E}^{p}+\frac{1}{2p}\int_{\mathbb{R}^{3}}\phi_{u}|u|^{p}-\int_{\text{supp}(v_{1})\cap\text{supp}(v_{2})}F(u)\notag\\
		\leqslant&\frac{1}{p}\|u\|_{E}^{p}+\frac{C}{2p}\|u\|_{E}^{2p}-C_{1}\|u\|_{\mu}^{\mu}+C_{3}\notag\\
		=&\frac{1}{p}R^{p}\|tv_{1}+(1-t)v_{2}\|_{E}^{p}+\frac{C}{2p}R^{2p}\|tv_{1}+(1-t)v_{2}\|_{E}^{2p}\notag\\
		&-C_{1}R^{\mu}\|tv_{1}+(1-t)v_{2}\|_{\mu}^{\mu}+C_{3}.\notag
	\end{align}
	Combining Lemma \ref{lemma3.10} and $\mu>2p$, for $R$ largh enough and $\varepsilon$ small enough, we obtain
	\[\sup_{u\in\varphi_{0}(\partial_{0}\Delta)}I(u)<0<c_{*}.\]
	Therefore, the condition $(3)$ in Theorem \ref{theorem3.1} is satisfied. Arguing as in Theorem \ref{theorem3.1},\;$c$ is critical value, there exists at least one critical point $u$ on $E\backslash(P_{\varepsilon}^{+}\cup P_{\varepsilon}^{-})$  with respect to $I$, and it also is a sign-changing solution to equation \eqref{3.1}. Therefore $(u,\phi_{u})$ is a sign-changing solution of system \eqref{1.1}.
\end{proof}

\subsection{Existence of infinitely many sign-changing solutions}

In this section, we prove the existence of infinitely many sign-changing solutions to system \eqref{1.1}. For this, we will make use of \cite[Theorem 2.5]{liu2015multiple} , which we recall below. Assume $G: X\to X$ to be an isometric involution, that is, $G^{2}=id$ and $d(Gx,Gy)=d(x,y)$ for $x,y\in X$. We assume $J$ is $G$-invariant on $X$ in the sense that $J(Gx)=J(x)$ for any $x\in X$. We also assume $Q=GP$. A subset $F\subset X$ is said to be symmetric if $Gx\in F$ for any $x\in F$. The genus of a closed symmetric subset $F$ of $X\backslash\{0\}$ is denoted by $\gamma(F)$.

\begin{definition}
	$P$ is called a $G$-admissible invariant set with respect to $J$ at level $c$, if the
	following deformation property holds, there exist $\varepsilon_{0}>0$ and a symmetric open neighborhood $N$ of $K_{c} \backslash W$ with $\gamma(\overline{N})<\infty$, such that for $\varepsilon \in\left(0,\varepsilon_{0}\right)$, there exists $\eta \in C(X, X)$ satisfying\\
	$(1)\;\eta(\overline{P}) \subset \overline{P}, \eta(\overline{Q}) \subset \overline{Q};$\\
	$(2)\;\eta \circ G=G \circ \eta;$\\
	$(3)\;\eta|_{J^{c-2 \varepsilon}}=id;$\\
	$(4)\;\eta\left(J^{c+\varepsilon} \backslash(N \cup W)\right) \subset J^{c-\varepsilon}.$\\
	
\end{definition}

\begin{thA}\label{theorem3.2}
	Assume that $P$ is a $G-$admissible invariant set with respect to $J$  at any
	level $c\geqslant c^{*}:=\inf_{u \in \Sigma} J(u)$  and for any $n \in \mathbb{N}$,  there exists a continuous map $\varphi_{n}: B_{n}:=$ $\left\{x \in \mathbb{R}^{n}:|x| \leqslant 1\right\} \rightarrow X$ satisfying\\
	$(1)$\;$\varphi_{n}(0) \in M:=P \cap Q, \varphi_{n}(-t)=G \varphi_{n}(t)$ for $t \in B_{n},$\\
	$(2)\;\varphi_{n}\left(\partial B_{n}\right) \cap M=\emptyset;$\\
	$(3)\;\sup _{u \in \operatorname{Fix}_{G} \cup \varphi_{n}\left(\partial B_{n}\right)} J(u)<c^{*},$ where $\ \operatorname{Fix}_{G}:=\{u \in X: G u=u\}$.\\
	For $j \in \mathbb{N},$ define
	$$
	c_{j}=\inf _{B \in \Gamma_{j}} \sup _{u \in B \backslash W} J(u),
	$$
	where
	$$
	\Gamma_{j}:=\left\{\begin{array}{l}
		B\;\bigg| \begin{array}{l}
			\ B=\varphi\left(B_{n} \backslash Y\right)\;\text {for some}\;\varphi \in G_{n}, \ n \geqslant j,
			\text {and open}\;Y \subset B_{n}\\
			\;\text {such that}-Y=Y \text {and}\;\gamma(\overline{Y}) \leqslant n-j
		\end{array}
	\end{array}\right\},
	$$
	and
	$$
	G_{n}:=\left\{\varphi \bigg| \begin{array}{l}
		\varphi \in C\left(B_{n}, X\right), \text{for}\;t \in B_{n},\ \varphi(-t)=G \varphi(t), \\
		\varphi(0) \in M \text {and}\left.\varphi\right|_{\partial B_{n}}=\left.\varphi_{n}\right|_{\partial B_{n}}
	\end{array}\right\}.
	$$
	Then for $j\geqslant2,c_{j}\geqslant c^{*}$ is a critical value of $J$,\;$K_{c_{j}}\backslash W\neq\emptyset$,\;$c_{j}\rightarrow\infty$ as $j\rightarrow\infty$.
\end{thA}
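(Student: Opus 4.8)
The plan is to prove the four conclusions — that $c_{j}\geqslant c^{*}$, that each $c_{j}$ is a critical value, that $K_{c_{j}}\setminus W\neq\emptyset$, and that $c_{j}\to\infty$ — by running a minimax over the symmetric family $\Gamma_{j}$ and exploiting the genus together with the equivariant deformation supplied by the $G$-admissibility of $P$, following the scheme of \cite{liu2015multiple}. As a preliminary I would record that $\Gamma_{j}\neq\emptyset$: taking the excised set $Y=\emptyset$ shows $\varphi_{n}(B_{n})\in\Gamma_{j}$ for every $n\geqslant j$. Moreover, since the constraint $\gamma(\overline{Y})\leqslant n-(j+1)$ is strictly stronger than $\gamma(\overline{Y})\leqslant n-j$, one has $\Gamma_{j+1}\subset\Gamma_{j}$, and therefore the sequence $c_{j}$ is nondecreasing.

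The heart of the lower bound $c_{j}\geqslant c^{*}$ (for $j\geqslant2$) is an intersection lemma: for every $B=\varphi(B_{n}\setminus Y)\in\Gamma_{j}$ one has $(B\setminus W)\cap\Sigma\neq\emptyset$, whence $\sup_{u\in B\setminus W}J(u)\geqslant\inf_{u\in\Sigma}J(u)=c^{*}$. I would prove this by a genus-based linking argument. Using the symmetry $\varphi(-t)=G\varphi(t)$, the normalization $\varphi(0)\in M=P\cap Q$, and the boundary condition $\varphi|_{\partial B_{n}}=\varphi_{n}|_{\partial B_{n}}$ together with hypotheses (2) and (3) (which keep $\varphi_{n}(\partial B_{n})$ and $\mathrm{Fix}_{G}$ away from $M$ and below $c^{*}$), the monotonicity and dimension properties of the genus force the symmetric image $\varphi(B_{n}\setminus Y)$, once the dimension deficit satisfies $n-\gamma(\overline{Y})\geqslant j\geqslant2$, to meet the separating set $\Sigma=\partial P\cap\partial Q$ outside $W=P\cup Q$. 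The requirement $j\geqslant2$ is precisely what is needed to link with the intersection of the two boundaries rather than with a single one.

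Next I would show each $c:=c_{j}$ is a critical value with $K_{c}\setminus W\neq\emptyset$ by contradiction through the deformation property. If $K_{c}\setminus W=\emptyset$ one takes $N$ of genus zero; otherwise the $G$-admissibility furnishes a symmetric open neighborhood $N$ of $K_{c}\setminus W$ with $\gamma(\overline{N})<\infty$ and a map $\eta\in C(X,X)$ satisfying (1)--(4). Choosing a near-optimal $B=\varphi(B_{n}\setminus Y)\in\Gamma_{j}$ with $\sup_{B\setminus W}J<c+\varepsilon$, I would absorb $N$ into the excised set by replacing $Y$ with $Y\cup\varphi^{-1}(N)$, so that by the subadditivity of the genus the index drops by at most $\gamma(\overline{N})$, and then apply $\eta$. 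The invariances $\eta(\overline{P})\subset\overline{P}$, $\eta(\overline{Q})\subset\overline{Q}$ and the equivariance $\eta\circ G=G\circ\eta$ guarantee that $\eta\bigl(\varphi(B_{n}\setminus(Y\cup\varphi^{-1}(N)))\bigr)$ is again an admissible competitor, while (4) pushes its sup below $c-\varepsilon$. This yields the multiplicity relation: if $c_{j}=\cdots=c_{j+m}$ then $\gamma(K_{c}\setminus W)\geqslant m+1$; in particular each $c_{j}$ is critical and $K_{c_{j}}\setminus W\neq\emptyset$. For $c_{j}\to\infty$ I would argue that if the nondecreasing sequence were bounded, with limit $\bar c<\infty$, the relation would give $\gamma(K_{\bar c}\setminus W)=\infty$; but the (PS) condition (available since $\mu>2p$, via Lemma \ref{lemma3.4}) makes $K_{\bar c}$ compact, so $K_{\bar c}\setminus W$ has finite genus, a contradiction.

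The main obstacle is the bookkeeping in the third paragraph: carrying the finite-genus neighborhood $N$ through the equivariant deformation so that the excised set $Y$ is enlarged in a way that is simultaneously symmetric, open, and controlled by $\gamma(\overline{N})$, while keeping the deformed family inside $\Gamma_{j}$. This is where the invariance of $\overline{P},\overline{Q}$ under $\eta$, the equivariance $\eta\circ G=G\circ\eta$, and the continuity and subadditivity of the genus must be combined delicately; by contrast, the intersection lemma of the second paragraph is comparatively routine once the genus framework is in place.
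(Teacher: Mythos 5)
You should know at the outset that the paper contains no proof of this statement: Theorem \ref{theorem3.2} is recalled verbatim from \cite[Theorem 2.5]{liu2015multiple} and used as a black box, so there is no internal argument to compare against. Measured against the standard proof in that reference, your outline is the right one and in the right order: nonemptiness and nesting of the families ($Y=\emptyset$ gives $\varphi_n(B_n)\in\Gamma_j$, and $\Gamma_{j+1}\subset\Gamma_j$ gives monotonicity of $c_j$); a genus-based intersection lemma showing $\varphi(B_n\setminus Y)\cap\Sigma\neq\emptyset$ whenever $n-\gamma(\overline{Y})\geqslant j\geqslant2$, which yields $c_j\geqslant c^*$ (your heuristic for why $j\geqslant2$ is needed is correct: on the relevant symmetric set the two closed pieces $\{t:\varphi(t)\in\partial P\}$ and $\{t:\varphi(t)\in\partial Q\}$ are exchanged by the involution, and if they were disjoint off $Y$ the genus would be at most $1$, contradicting $\geqslant j\geqslant2$); then deformation plus excision of $\varphi^{-1}(N)$ with the subadditivity of the genus for criticality, the multiplicity relation, and divergence.

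There are, however, three concrete soft spots in your execution. First, your claim that the invariances $\eta(\overline{P})\subset\overline{P}$, $\eta(\overline{Q})\subset\overline{Q}$ and the equivariance $\eta\circ G=G\circ\eta$ \emph{guarantee} that the deformed set is an admissible competitor is incomplete as stated: membership in $G_n$ also requires the exact boundary condition $\eta\circ\varphi|_{\partial B_n}=\varphi_n|_{\partial B_n}$ and $\eta\circ\varphi(0)\in M$, and equivariance does not deliver these. This is where hypothesis (3) must be consumed a second time: since $\sup_{u\in\mathrm{Fix}_G\cup\varphi_n(\partial B_n)}J(u)<c^*\leqslant c$, for $\varepsilon$ small one has $\varphi_n(\partial B_n)\subset J^{c-2\varepsilon}$, and property (3) of the deformation, $\eta|_{J^{c-2\varepsilon}}=id$, freezes the boundary values; one also needs $\overline{N}$ to avoid $\mathrm{Fix}_G$ (the genus is only defined for symmetric sets avoiding the fixed-point set), which again comes from hypothesis (3). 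Second, your divergence argument is not literally valid: if $c_j<\bar c$ strictly for all $j$, the equal-level multiplicity relation never applies, so "the relation would give $\gamma(K_{\bar c}\setminus W)=\infty$" does not follow. The standard repair works at the limit level: take $N$ a neighborhood of $K_{\bar c}\setminus W$ with $k:=\gamma(\overline{N})<\infty$, choose $j$ large with $c_j>\bar c-\varepsilon$ and a near-optimal $B=\varphi(B_n\setminus Y)\in\Gamma_{j+k}$ with $\sup_{B\setminus W}J<\bar c+\varepsilon$, excise $\varphi^{-1}(N)$ to obtain an element of $\Gamma_j$, and deform to get $\sup$ below $\bar c-\varepsilon<c_j$, a contradiction. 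Third, a category error: you invoke the (PS) condition "available since $\mu>2p$, via Lemma \ref{lemma3.4}" to get compactness of $K_{\bar c}$, but the statement under review is the \emph{abstract} theorem, in which no Palais--Smale condition is assumed; the finiteness of the genus near $K_c\setminus W$ is built into the definition of a $G$-admissible invariant set (the clause $\gamma(\overline{N})<\infty$), and that is what the abstract proof must use. All three gaps are repairable with ingredients already present in the hypotheses, so your proposal is correct in strategy but incomplete at exactly the points where the hypotheses (3) and the admissibility definition do real work.
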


To apply Theorem \ref{theorem3.2}, we take $X=E, G=-id, J=I$ and $P=P_{\varepsilon}^{+}$. Then $M=P_{\varepsilon}^{+}\cap P_{\varepsilon}^{-},\Sigma=\partial P_{\varepsilon}^{+}\cap\partial P_{\varepsilon}^{-}$, and $W=P_{\varepsilon}^{+}\cup P_{\varepsilon}^{-}$. In this subsection, $f$ is assumed to be odd, and, as a consequence, $I$ is even. Now, we show that $P_{\varepsilon}^{+}$ is a $G$-admissible invariant set for the functional $I$ at any level $c$. Since $K_{c}$ is compact, there exists a symmetric open neighborhood $N$ of $K_{c}\backslash W$ such that $\gamma(N)<\infty$.

\begin{lem}
	There exists $\varepsilon_{0}>0$ such that for $0<\varepsilon<\varepsilon^{\prime}<\varepsilon_{0}$, there exists a continuous map $\sigma:[0,1] \times E \rightarrow E$ satisfying\\
	$(1)$\;$\sigma(0, u)=u$ for $u\in E.$\\
	$(2)$\;$\sigma(t, u)=u$ for $t \in[0,1],\;u \notin I^{-1}\left[c-\varepsilon^{\prime}, c+\varepsilon^{\prime}\right].$\\
	$(3)$\;$\sigma(t,-u)=-\sigma(t, u)$ for $(t, u) \in[0,1] \times E.$\\
	$(4)$ $\sigma\left(1, I^{c+\varepsilon} \backslash(N \cup W)\right) \subset I^{c-\varepsilon}.$\\
	$(5)$\;$\sigma\left(t,\overline{P_{\varepsilon}^{+}}\right)\subset \overline{P_{\varepsilon}^{+}},\,\sigma\left(t,\overline{P_{\varepsilon}^{-}}\right)\subset \overline{P_{\varepsilon}^{-}}$ for $t \in[0,1].$
\end{lem}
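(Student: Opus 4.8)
The plan is to repeat the construction of Lemma~\ref{lemma3.8}, adding the equivariance required by~(3) and excising the neighborhood $N$ demanded by~(4). Since $f$ is odd, $I$ is even and $A$ is odd (as noted after Lemma~\ref{lemma3.2}); hence part~(4) of Lemma~\ref{lemma3.7} lets us take the locally Lipschitz operator $B$ to be odd. I would define $\sigma(t,u)=\tau(\theta t,u)$, where $\tau$ solves the same Cauchy problem
\[
\begin{cases}
\dfrac{\mathrm{d}\tau}{\mathrm{d}t}=-g(\tau)\bigl(\tau-B(\tau)\bigr),\\
\tau(0,u)=u,
\end{cases}
\]
but now with the cut-off $g:E\to[0,1]$ chosen \emph{even}. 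With $g$ even and $B$ odd the field $V(u):=-g(u)\bigl(u-B(u)\bigr)$ is odd, so $t\mapsto-\tau(t,u)$ solves the Cauchy problem with datum $-u$; uniqueness gives $\tau(t,-u)=-\tau(t,u)$, which is conclusion~(3). Conclusions~(1) and~(2) are immediate from the existence--uniqueness theorem, and conclusion~(5), the invariance of $\overline{P_\varepsilon^{+}}$ and $\overline{P_\varepsilon^{-}}$, follows from part~(1) of Lemma~\ref{lemma3.7} exactly as in Lemma~2.1 of \cite{liu2001invariant}.

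The genuinely new point is that $K_c\setminus W$ may now be nonempty. Because $\mu>2p$ and $I$ satisfies the (PS) condition, $K_c$ is compact and, since $I$ is even, symmetric; $N$ is by hypothesis a symmetric open neighborhood of $K_c\setminus W$ with $\gamma(\overline N)<\infty$. As every point of $K_c\cap W$ lies in the open set $W$ and every point of $K_c\setminus W$ lies in $N$, we have $K_c\subset N\cup W$, so by compactness there is $\delta>0$ with $N_{\delta}(K_c)\subset N\cup W$. Using the (PS) condition together with Lemma~\ref{lemma3.5} as in Lemma~\ref{lemma3.8}, I fix $\varepsilon_0,\alpha,\beta>0$ so that $\|I'(u)\|_{E^{*}}\geqslant\alpha$ and $\|u-A(u)\|_E\geqslant\beta$ hold on $I^{-1}[c-\varepsilon_0,c+\varepsilon_0]\setminus N_{\delta/2}(K_c)$, and I take $g$ to vanish on $N_{\delta/4}(K_c)$ and off $I^{-1}[c-\varepsilon',c+\varepsilon']$, to equal $1$ on $I^{-1}[c-\varepsilon,c+\varepsilon]\setminus N_{\delta/2}(K_c)$, and to be even; symmetry of $K_c$, $N$, $W$ and of the sublevel band makes this last requirement compatible with the others (if necessary replace $g(u)$ by $\tfrac{1}{2}\bigl(g(u)+g(-u)\bigr)$).

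It then remains to check conclusion~(4): every $u\in I^{c+\varepsilon}\setminus(N\cup W)$ satisfies $I\bigl(\tau(\theta,u)\bigr)<c-\varepsilon$. Since $N_{\delta}(K_c)\subset N\cup W$, such a $u$ has $\text{dist}(u,K_c)\geqslant\delta$, which is precisely the property exploited in Lemma~\ref{lemma3.8}. I would then run the monotonicity-and-contradiction argument of Lemma~\ref{lemma3.8} in both regimes $1<p\leqslant2$ and $p\geqslant2$: part~(3) of Lemma~\ref{lemma3.7} makes $t\mapsto I\bigl(\tau(t,u)\bigr)$ nonincreasing, and if it stayed $\geqslant c-\varepsilon$ on $[0,\theta]$ then, by the travel-distance estimate \eqref{3.24}, the orbit could not enter $N_{\delta/2}(K_c)$ without forcing a drop of $I$ exceeding the available $2\varepsilon$, contradicting $\varepsilon<\varepsilon_0\leqslant\min\{\tfrac{a_1\beta\delta L^{p-2}}{16},\tfrac{\beta^{p-1}a_1\delta}{8}\}$. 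Hence $g\equiv1$ along the orbit, and integrating the sharp lower bound for $-\tfrac{\mathrm{d}}{\mathrm{d}t}I(\tau)$ over $[0,\theta]$ with $\theta=\max\{\tfrac{6\varepsilon L^{2-p}}{a_1\beta^{2}},\tfrac{6\varepsilon}{a_1\beta^{p}}\}$ yields $I\bigl(\tau(\theta,u)\bigr)\leqslant c-2\varepsilon$, the required contradiction.

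The chief difficulty is organizational rather than analytic: one must pin down a single even cut-off $g$ that vanishes near all of $K_c$ yet equals $1$ throughout the band where the descent estimates of Lemma~\ref{lemma3.8} apply, while keeping the constants $\delta,\beta,L,\theta,\varepsilon_0$ simultaneously compatible with the two inequalities governing the cases $1<p\leqslant2$ and $p\geqslant2$. Once the inclusion $N_{\delta}(K_c)\subset N\cup W$ and the evenness of $g$ are secured, both the descent and the equivariance reduce to the estimates already established in Lemmas~\ref{lemma3.7} and~\ref{lemma3.8}.
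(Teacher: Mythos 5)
Your proposal is correct and takes essentially the same approach as the paper, whose entire proof of this lemma is the one-line remark that it is similar to the proof of Lemma~\ref{lemma3.8}: you simply carry out the intended adaptation explicitly, taking $B$ odd via Lemma~\ref{lemma3.7}(4), symmetrizing the cut-off $g$, and replacing the inclusion $N_{\delta}(K_{c})\subset W$ by $N_{\delta}(K_{c})\subset N\cup W$, which follows from compactness of $K_{c}$ and $K_{c}\subset N\cup W$. The equivariance from uniqueness for the odd vector field and the unchanged descent estimates in both regimes $1<p\leqslant 2$ and $p\geqslant 2$ are precisely what the paper's cross-reference presupposes.
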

\begin{proof}
	The proof is similar to the proof of Lemma \ref{lemma3.8}.
\end{proof}

\begin{corollary}
	$P_{\varepsilon}^{+}$ is a $G$-admissible invariant set for the functional $I$ at any level $c$.
\end{corollary}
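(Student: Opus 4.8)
The plan is to derive this corollary directly from the deformation map $\sigma$ produced in the preceding lemma by setting $\eta:=\sigma(1,\cdot)$, so that verifying $G$-admissibility reduces to matching the five properties of $\sigma$ against the four clauses in the definition. First I would record the identifications $X=E$, $G=-\mathrm{id}$, $J=I$, $P=P_{\varepsilon}^{+}$ and $Q=GP=P_{\varepsilon}^{-}$, so that $W=P_{\varepsilon}^{+}\cup P_{\varepsilon}^{-}$ and $M=P_{\varepsilon}^{+}\cap P_{\varepsilon}^{-}$.

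Before invoking the lemma I must exhibit the symmetric open neighborhood $N$ of $K_{c}\setminus W$ with $\gamma(\overline{N})<\infty$ required by the definition. Since $\mu>2p$, Lemma~\ref{lemma3.4} shows that $I$ satisfies the (PS) condition, whence $K_{c}$ is compact; as $f$ is odd, $I$ is even and therefore $K_{c}$ is symmetric under $G=-\mathrm{id}$. Because $0\in P_{\varepsilon}^{+}\cap P_{\varepsilon}^{-}\subset W$, the set $K_{c}\setminus W$ is a compact symmetric subset of $E\setminus\{0\}$, so by the standard properties of the genus it admits a symmetric open neighborhood $N$ with $\gamma(\overline{N})<\infty$. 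This furnishes exactly the data the definition demands.

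Next I would fix $\varepsilon_{0}$ as in the lemma and choose an auxiliary level $\varepsilon'\in(\varepsilon,\min\{2\varepsilon,\varepsilon_{0}\})$, which is nonempty for every $\varepsilon\in(0,\varepsilon_{0})$. Applying the lemma yields $\sigma\in C([0,1]\times E,E)$, and I set $\eta:=\sigma(1,\cdot)$. The verification is then clause by clause: property $(5)$ at $t=1$ gives $\eta(\overline{P})\subset\overline{P}$ and $\eta(\overline{Q})\subset\overline{Q}$, establishing clause $(1)$; property $(3)$ at $t=1$ reads $\eta(-u)=-\eta(u)$, i.e. $\eta\circ G=G\circ\eta$, which is clause $(2)$; property $(4)$ is precisely clause $(4)$, namely $\eta(I^{c+\varepsilon}\setminus(N\cup W))\subset I^{c-\varepsilon}$. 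The only clause needing a short argument is $(3)$: if $u\in I^{c-2\varepsilon}$ then $I(u)\leqslant c-2\varepsilon<c-\varepsilon'$ by our choice $\varepsilon'<2\varepsilon$, so $u\notin I^{-1}[c-\varepsilon',c+\varepsilon']$ and property $(2)$ of $\sigma$ forces $\eta(u)=\sigma(1,u)=u$; hence $\eta|_{I^{c-2\varepsilon}}=\mathrm{id}$.

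Since all four clauses hold for every $\varepsilon\in(0,\varepsilon_{0})$, the set $P_{\varepsilon}^{+}$ is a $G$-admissible invariant set at the level $c$. I expect no substantive obstacle here: the real work was already carried out in the preceding lemma, and the only points demanding care are the choice of $\varepsilon'$ used in clause $(3)$ and the observation that $K_{c}\setminus W$ avoids the origin, so that its neighborhood can indeed be taken of finite genus.
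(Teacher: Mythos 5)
Your proof is correct and follows essentially the same route as the paper, which simply asserts that the corollary follows from the preceding deformation lemma together with the earlier remark that compactness of $K_{c}$ yields a symmetric open neighborhood $N$ of $K_{c}\backslash W$ with finite genus. Your write-up merely makes explicit the routine details the paper leaves implicit: setting $\eta:=\sigma(1,\cdot)$, choosing $\varepsilon'\in(\varepsilon,\min\{2\varepsilon,\varepsilon_{0}\})$ for clause $(3)$, and noting $0\in W$ so that $K_{c}\setminus W$ avoids the origin.
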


\begin{proof}
	\textbf{(Multiplicity part)}
	Referring to \cite{liu2015multiple}, we construct $\varphi_{n}$ satisfying the hypotheses of Theorem \ref{theorem3.2}. For any fixed $n\in\mathbb{N}$, we choose $\{v_{i}\}_{1}^{n}\subset C_{0}^{\infty}(\mathbb{R}^{3})\backslash \{0\}$ such that $\text{supp}(v_{i})\cap\text{supp}(v_{j})=\emptyset$ for $i\ne j$. Define $\varphi_{n}\in C(B_{n},E)$, as
	\[\varphi_{n}(t)=R_{n}\sum_{i=1}^{n}t_{i}v_{i},\qquad t=(t_{1},t_{2},\cdots,t_{n})\in B_{n},\]
	where $R_{n}>0$. For $R_{n}$ large enough, we can see that $\varphi_{n}(\partial B_{n})\cap(P_{\varepsilon}^{+}\cap P_{\varepsilon}^{-})=\emptyset$, which means the condition $(2)$ of Theorem \ref{theorem3.2} is satisfied.
	
	Moreover,
	\[\sup_{u\in\varphi_{n}(\partial B_{n})}I(u)<0<\inf_{u\in \Sigma}I(u)=c^{*},\]
	which means the condition $(3)$ of Theorem \ref{theorem3.2} is satisfied. Obviously, we obtain $\varphi_{0}(0)=0\in P_{\varepsilon}^{+}\cap P_{\varepsilon}^{-},\varphi_{n}(-t)=-\varphi_{n}(t)$ for $t\in B_{n}$. This means the condition $(1)$ of Theorem \ref{theorem3.2} is satisfied.
	
	Therefore, there exist infinitely many sign-changing solutions to equation \eqref{3.1} i.e. system \eqref{1.1}.
\end{proof}

\section{Proof of Theorem 1.2}

In this section, we assume $\mu>\frac{2p(p+1)}{p+2}$. Since we do not impose $\mu>2p$, the boundedness of the (PS)-sequence becomes not easy to establish. This obstacle will be overcome via a perturbation approach which is originally due to \cite{liu2015multiple}. The method from Section 3 can be used for the perturbed problem. By passing to the limit, we  obtain sign-changing solutions of the original problem \eqref{1.1}.

Fix a number $r\in\bigl(\text{max}(2p,q),p^{*}\bigr)$. For any fixed $\lambda\in(0,1]$, we consider the modified problem
\begin{equation}
	-\Delta_{p}u+V(x)|u|^{p-2}u+\phi_{u}|u|^{p-2}u=f(u)+\lambda|u|^{r-2}u,\qquad u\in E\label{4.1}\tag{4.1}
\end{equation}
and its associated functional
\[I_{\lambda}(u)=I(u)-\frac{\lambda}{r}\int_{\mathbb{R}^{3}}|u|^{r}.\]
It is standard to show that $I_{\lambda}\in C^{1}(E,\mathbb{R})$ and
\[\langle I_{\lambda}'(u),v\rangle=\langle I'(u),v\rangle-\lambda\int_{\mathbb{R}^{3}}|u|^{r-2}uv,\qquad u,v\in E.\]
For any $u\in E$, we denote by $v=A_{\lambda}(u)\in E$ the unique solution to the problem
\[-\Delta_{p}v+V(x)|v|^{p-2}v+\phi_{u}|v|^{p-2}v=f(u)+\lambda|u|^{r-2}u,\qquad v\in E.\label{4.2}\tag{4.2}\]
As in Section 3, one verifies that the operator $A_{\lambda}: E\to E$ is well defined and is continuous and compact. In the following, if the proof of a result is similar to its counterpart in Section 3, it will not be written out.

\begin{lem}\label{lemma4.1}
	$(1)$ There exist $1<p\leqslant2,a_{1}>0$ and $a_{2}>0$ such that
	\[\langle I_{\lambda}'(u),u-A_{\lambda}(u)\rangle\geqslant a_{1}\|u-A_{\lambda}(u)\|_{E}^{2}(\|u\|_{E}+\|A_{\lambda}(u)\|_{E})^{p-2}\]
	and
	\[\|I_{\lambda}'(u)\|_{E^{*}}\leqslant a_{2}\|u-A_{\lambda}(u)\|_{E}^{p-1}\bigl(1+\|u\|_{E}^{p}\bigr)\]
	hold for every $u\in E$.\\
	$(2)$There exist $p\geqslant2,a_{1}>0$ and $a_{2}>0$ such that
	\[\langle I_{\lambda}'(u),u-A_{\lambda}(u)\rangle\geqslant a_{1}\|u-A_{\lambda}(u)\|_{E}^{p}.\]
	and
	\[\|I_{\lambda}'(u)\|_{E^{*}}\leqslant a_{2}\|u-A_{\lambda}(u)\|_{E}\bigl(\|u\|_{E}+\|A_{\lambda}(u)\|_{E}\bigr)^{p-2}\bigl(1+\|u\|_{E}^{p}\bigr)\]
	hold for every $u\in E$.
\end{lem}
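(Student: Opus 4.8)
The plan is to mirror the proof of Lemma~\ref{lemma3.3} almost verbatim, since the perturbed functional $I_{\lambda}$ differs from $I$ only by the smooth term $-\frac{\lambda}{r}\int_{\mathbb{R}^{3}}|u|^{r}$, and correspondingly $A_{\lambda}$ solves the same type of equation as $A$ but with right-hand side $f(u)+\lambda|u|^{r-2}u$ in place of $f(u)$. First I would compute $\langle I_{\lambda}'(u),u-A_{\lambda}(u)\rangle$ exactly as in \eqref{3.10}: writing $v=A_{\lambda}(u)$ and using that $v$ solves \eqref{4.2}, the perturbation terms $-\lambda\int|u|^{r-2}u(u-v)$ coming from $I_{\lambda}'$ and from the definition of $A_{\lambda}$ cancel against each other, so the difference reduces to precisely the three monotone terms involving $|\nabla u|^{p-2}\nabla u-|\nabla v|^{p-2}\nabla v$, $V(x)(|u|^{p-2}u-|v|^{p-2}v)$, and $\phi_{u}(|u|^{p-2}u-|v|^{p-2}v)$. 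This is the key observation: \emph{the lower bound is unaffected by $\lambda$}. Hence the two coercivity estimates in $(1)$ and $(2)$ follow by the identical application of Lemma~\ref{lemma3.1} and H\"older's inequality that produced the bounds in Lemma~\ref{lemma3.3}, with the same constant $a_{1}$.

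For the upper bounds on $\|I_{\lambda}'(u)\|_{E^{*}}$, I would expand $\langle I_{\lambda}'(u),\varphi\rangle$ as in Lemma~\ref{lemma3.3}, obtaining the same $p$-Laplacian, potential, and Poisson contributions bounded by $C\|u-v\|_{E}^{p-1}\|\varphi\|_{E}$ (when $1<p\leqslant2$) or $C\|u-v\|_{E}(\|u\|_{E}+\|v\|_{E})^{p-2}(1+\|u\|_{E}^{p})\|\varphi\|_{E}$ (when $p\geqslant2$), via the estimates \eqref{3.13}--\eqref{3.15}. The only new contribution is the perturbation term $-\lambda\int_{\mathbb{R}^{3}}|u|^{r-2}u\varphi$, which by H\"older's inequality and the Sobolev embedding $E\hookrightarrow L^{r}(\mathbb{R}^{3})$ (valid since $r<p^{*}$, Remark~\ref{remark2.2}) is bounded by $\lambda\|u\|_{r}^{r-1}\|\varphi\|_{r}\leqslant C\|u\|_{E}^{r-1}\|\varphi\|_{E}$, uniformly in $\lambda\in(0,1]$. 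I would then absorb this polynomial-in-$\|u\|_{E}$ factor into the stated right-hand sides by enlarging the constant $a_{2}$, noting that $\|u\|_{E}^{r-1}$ is dominated by the existing $(1+\|u\|_{E}^{p})$-type growth after factoring out the appropriate power of $\|u-A_{\lambda}(u)\|_{E}$.

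The one genuine subtlety, and the step I expect to require the most care, is the reconciliation of the perturbation term with the factor $\|u-A_{\lambda}(u)\|_{E}^{p-1}$ (resp.\ $\|u-A_{\lambda}(u)\|_{E}$) that the statement demands in front of every term. In Lemma~\ref{lemma3.3} every summand of $\langle I'(u),\varphi\rangle$ is literally of the form (difference of the nonlinear operators at $u$ and $v$) tested against $\varphi$, which is what generates the $\|u-v\|_{E}$-dependence through Lemma~\ref{lemma3.1}; the raw term $-\lambda\int|u|^{r-2}u\varphi$ has no such difference structure on its face. The remedy is to re-express it using the equation \eqref{4.2}: the quantity $f(u)+\lambda|u|^{r-2}u$ equals the full elliptic operator applied to $v$, so substituting back yields $\langle I_{\lambda}'(u),\varphi\rangle$ as a sum of operator-differences between $u$ and $v$ (exactly as in the unperturbed display preceding \eqref{3.13}), whence the $\|u-v\|_{E}$ factor appears automatically and no isolated $\lambda$-term survives. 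Once this algebraic rewriting is carried out, the remaining estimates are routine and the constants $a_{1},a_{2}$ can be chosen independent of $\lambda\in(0,1]$, which is what the subsequent convergence argument in Section~4 will need.
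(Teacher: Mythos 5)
Your proposal is correct and follows essentially the same route as the paper: the defining equation \eqref{4.2} makes the $\lambda$-terms cancel, so $\langle I_{\lambda}'(u),u-A_{\lambda}(u)\rangle$ and $\langle I_{\lambda}'(u),\varphi\rangle$ reduce to exactly the operator-difference expressions of Lemma~\ref{lemma3.3}, after which all estimates carry over verbatim with $\lambda$-independent constants. Note that the absorption heuristic in your second paragraph (dominating $\|u\|_{E}^{r-1}$ by the $(1+\|u\|_{E}^{p})$ growth) would not actually work since $r-1>p$, but this is moot because your third paragraph correctly discards it in favor of the substitution via \eqref{4.2}, which is precisely the paper's argument.
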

\begin{proof}
	The proof is similar to the proof of Lemma \ref{lemma3.3}, and we get
	\begin{align}
		&\langle I_{\lambda}'(u),u-A_{\lambda}(u)\rangle\notag\\
		=&\int_{\mathbb{R}^{3}}|\nabla u|^{p-2}\nabla u\nabla\bigl(u-A_{\lambda}(u)\bigr)+\int_{\mathbb{R}^{3}}V(x)|u|^{p-2}u\bigl(u-A_{\lambda}(u)\bigr)\notag\\
		&+\int_{\mathbb{R}^{3}}\phi_{u}|u|^{p-2}u\bigl(u-A_{\lambda}(u)\bigr)-\int_{\mathbb{R}^{3}}f(u)\bigl(u-A_{\lambda}(u)\bigr)-\lambda\int_{\mathbb{R}^{3}}|u|^{r-2}u(u-v)\notag\\
		=&\int_{\mathbb{R}^{3}}\bigl(|\nabla u|^{p-2}\nabla u-|\nabla v|^{p-2}\nabla v\bigr)(\nabla u-\nabla v)+\int_{\mathbb{R}^{3}}V(x)\bigl(|u|^{p-2}u-|v|^{p-2}v\bigr)(u-v)\notag\\
		&+\int_{\mathbb{R}^{3}}\phi_{u}\bigl(|u|^{p-2}u-|v|^{p-2}v\bigr)(u-v),\notag
	\end{align}
	which is the same as Lemma \ref{lemma3.3}, the proof is complete.
\end{proof}

\begin{lem}\label{lemma4.2}
	If $\{u_{n}\}\subset E$ is a bounded sequence with $I_{\lambda}'(u_{n})\to0$, then $\{u_{n}\}\subset E$  has a convergent subsequence.
\end{lem}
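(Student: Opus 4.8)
The plan is to mirror the proof of Lemma~\ref{lemma3.4}, the only genuinely new ingredient being the perturbation term arising from $\lambda|u|^{r-2}u$. Since $\{u_n\}$ is \emph{assumed} bounded, I would first pass to a subsequence with $u_n\rightharpoonup u$ in $E$. By Remark~\ref{remark2.2} the embedding $E\hookrightarrow L^s(\mathbb{R}^3)$ is compact for every $s\in[p,p^*)$; recalling that $r<p^*$, this gives $u_n\to u$ strongly in $L^q(\mathbb{R}^3)$ and, crucially, in $L^r(\mathbb{R}^3)$.

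Next I would compute
\[
\langle I_\lambda'(u_n)-I_\lambda'(u),u_n-u\rangle=\langle I'(u_n)-I'(u),u_n-u\rangle-\lambda\int_{\mathbb{R}^3}\bigl(|u_n|^{r-2}u_n-|u|^{r-2}u\bigr)(u_n-u).
\]
The $I'$-part is handled exactly as in Lemma~\ref{lemma3.4}: combining H\"older's inequality with Lemma~\ref{lemma3.1} yields an inequality of the form \eqref{3.16}, bounding $\|u_n-u\|_E^p$ by a $\frac{p}{2}$-power of the monotonicity integrals times the bounded factor $(\|u_n\|_E^p+\|u\|_E^p)^{\frac{2-p}{2}}$ (with the analogous estimate in the case $p\geqslant2$). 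The nonlocal Poisson contribution and the nonlinearity contribution both tend to $0$ by the computations already carried out in \eqref{3.7} and \eqref{3.8}, invoking the compact embedding into $L^{\frac{6p}{6-p}}$ and $L^q$ respectively.

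It then remains to control the perturbation term. Using the elementary inequality $\bigl||x|^{r-2}x-|y|^{r-2}y\bigr|\leqslant C\bigl(|x|+|y|\bigr)^{r-2}|x-y|$ (the analogue for the exponent $r$ of the first estimate in Lemma~\ref{lemma3.1}) and H\"older's inequality with exponents $\frac{r}{r-2}$ and $\frac{r}{2}$ (legitimate since $r>2p>2$), I would estimate
\[
\int_{\mathbb{R}^3}\bigl|\,|u_n|^{r-2}u_n-|u|^{r-2}u\,\bigr||u_n-u|\leqslant C\bigl\||u_n|+|u|\bigr\|_r^{r-2}\|u_n-u\|_r^2\longrightarrow0,
\]
because $\|u_n-u\|_r\to0$ while the first factor stays bounded. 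Finally, $\langle I_\lambda'(u_n),u_n-u\rangle\to0$ since $I_\lambda'(u_n)\to0$ in $E^*$ and $\{u_n-u\}$ is bounded, while $\langle I_\lambda'(u),u_n-u\rangle\to0$ by weak convergence; hence the whole left-hand side tends to $0$, and feeding this together with the three vanishing error terms into the analogue of \eqref{3.16} forces $\|u_n-u\|_E\to0$.

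The routine part here is everything inherited from Lemma~\ref{lemma3.4}; the one new estimate is the perturbation term, and the point I would stress is that it is controlled \emph{only} because $r<p^*$ makes the embedding $E\hookrightarrow L^r$ compact. In other words, the real difficulty of Section~4 is not this lemma at all---boundedness of the sequence is granted by hypothesis---but rather the subsequent task of proving that Palais--Smale sequences for $I_\lambda$ are bounded when $\mu$ is only assumed larger than $\frac{2p(p+1)}{p+2}$ rather than $2p$; that is where the subcritical perturbation $\lambda|u|^{r-2}u$ earns its keep.
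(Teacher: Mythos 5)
Your proof is correct and is precisely the argument the paper intends: the paper gives no written proof of this lemma, invoking its blanket remark that proofs similar to their Section~3 counterparts are omitted, and your adaptation of Lemma~\ref{lemma3.4} --- weak convergence of the (assumed bounded) sequence, the monotonicity estimate of type \eqref{3.16} with the analogous direct estimate via Lemma~\ref{lemma3.1} when $p\geqslant2$, and the vanishing of the Poisson and nonlinearity terms as in \eqref{3.7} and \eqref{3.8} --- is exactly that counterpart. The single genuinely new term, $\lambda\int_{\mathbb{R}^{3}}\bigl(|u_{n}|^{r-2}u_{n}-|u|^{r-2}u\bigr)(u_{n}-u)$, you dispatch correctly via the $r$-analogue of the first inequality in Lemma~\ref{lemma3.1} (legitimate since $r>2p>2$) together with H\"older with exponents $\frac{r}{r-2}$ and $\frac{r}{2}$ and the compact embedding $E\hookrightarrow L^{r}(\mathbb{R}^{3})$, which applies precisely because $p<r<p^{*}$.
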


\begin{lem}\label{lemma4.3}
	For any $\lambda\in(0,1),a<b,\alpha >0$, there exists $\beta(\lambda)>0$ such that $\|u-A_{\lambda}(u)\|_{E}\geqslant\beta(\lambda)$ for any $u\in E$ with $I_{\lambda}(u)\in[a,b]$ and $\|I_{\lambda}'(u)\|\geqslant\alpha$.
\end{lem}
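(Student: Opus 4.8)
The plan is to mirror the proof of Lemma~\ref{lemma3.5}, replacing $I$ and $A$ by $I_\lambda$ and $A_\lambda$ and invoking Lemmas~\ref{lemma4.1} and~\ref{lemma4.2}. I argue by contradiction: suppose there is a sequence $\{u_n\}\subset E$ with $I_\lambda(u_n)\in[a,b]$, $\|I_\lambda'(u_n)\|_{E^*}\geqslant\alpha$, yet $\|u_n-A_\lambda(u_n)\|_E\to0$. By the estimate in Lemma~\ref{lemma4.1} (for either $1<p\leqslant2$ or $p\geqslant2$), $\|I_\lambda'(u_n)\|_{E^*}$ is bounded by a positive power of $\|u_n-A_\lambda(u_n)\|_E$ times a polynomial in $\|u_n\|_E$; hence if $\{u_n\}$ is bounded in $E$ then $\|I_\lambda'(u_n)\|_{E^*}\to0$, contradicting $\|I_\lambda'(u_n)\|_{E^*}\geqslant\alpha$. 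So everything reduces to an a priori bound $\|u_n\|_E\leqslant C(\lambda)$, and since this bound may depend on $\lambda$ the resulting $\beta=\beta(\lambda)$ is allowed to degenerate as $\lambda\to0$.

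For the a priori bound I would work with the $E$-pairing $\langle u-v,u\rangle_E$ used in Lemma~\ref{lemma3.5}, where $v_n=A_\lambda(u_n)$ solves \eqref{4.2}. The combination $2pI_\lambda(u_n)-\langle u_n-v_n,u_n\rangle_E$ removes the Poisson term, whose homogeneity under $u\mapsto tu$ is $2p$ (recall $\phi_{tu}=t^p\phi_u$ from Lemma~\ref{lemma2.1}), reducing it to the cross term $\int_{\mathbb{R}^3}\phi_{u_n}(|u_n|^{p-2}u_n-|v_n|^{p-2}v_n)u_n$, which is controlled by $\|u_n-v_n\|_E$ via Lemmas~\ref{lemma2.1} and~\ref{lemma3.1}. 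What survives is $\|u_n\|_E^p+\int_{\mathbb{R}^3}\!\big(f(u_n)u_n-2pF(u_n)\big)+\lambda\big(1-\tfrac{2p}{r}\big)\int_{\mathbb{R}^3}|u_n|^r$. The perturbation contributes the genuinely positive term $\lambda(1-\tfrac{2p}{r})\|u_n\|_r^r$ because $r>2p$, and since $(f_2)$ only forces growth of order $q<r$, the high-power part of the (possibly negative) nonlinear term is dominated through interpolation and Young's inequality by $\eta\|u_n\|_r^r+C_\eta\|u_n\|_p^p$, with $\eta\|u_n\|_r^r$ absorbed into the perturbation.

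The main obstacle is precisely the regime $\tfrac{2p(p+1)}{p+2}<\mu<2p$ of Theorem~\ref{theorem1.2}. When $\mu\geqslant2p$ the surviving term $\int(f(u_n)u_n-2pF(u_n))$ is nonnegative by $(f_3)$ and the bound is immediate; but for $\mu<2p$ it can be negative, and the residual low-order term $C_\eta\|u_n\|_p^p\leqslant C_\eta C_V\|u_n\|_E^p$ carries a constant too large to absorb into $\|u_n\|_E^p$, so the Ambrosetti--Rabinowitz scheme alone does not close the estimate. To overcome this I would bring in $(V_2)$ via a Pohozaev/scaling identity: differentiating $I_\lambda$ along the dilations $u_\tau(x)=u(\tau^{-1}x)$ and using the scaling relations for $\phi_u$ in Lemma~\ref{lemma2.1}(2) produces a further relation in which $\int\phi_{u_n}|u_n|^p$ enters with weight $\tfrac{5}{2p}$ and the potential contributes $\int(\nabla V\cdot x)|u_n|^p$. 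Taking the linear combination of $I_\lambda(u_n)$, $\langle I_\lambda'(u_n),u_n\rangle$ and this Pohozaev quantity that annihilates $\int\phi_{u_n}|u_n|^p$, the potential term is handled by $(V_2)$ in the form $\tfrac{\mu-p}{2p-\mu}V+\nabla V\cdot x\geqslant0$, while the arithmetic of the combination becomes coercive in $\|u_n\|_E^p$ exactly because $\mu>\tfrac{2p(p+1)}{p+2}$, which is equivalent to $\tfrac{\mu-p}{2p-\mu}>\tfrac p2$, the very quantity appearing in $(V_2)$. As $u_n$ is only approximately critical, the identity holds up to an error $O(\|u_n-A_\lambda(u_n)\|_E)\to0$, which is harmless. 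This gives $\|u_n\|_E\leqslant C(\lambda)$ and completes the contradiction; the cross and error terms are dealt with as in the proof of Lemma~\ref{lemma3.5} and in \cite{bartsch2004superlinear,liu2015multiple}.
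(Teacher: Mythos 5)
Your contradiction skeleton, the reduction to a $\lambda$-dependent a priori bound via Lemma~\ref{lemma4.1}, and the first half of your estimate are all on the paper's track: the paper fixes $\gamma\in(2p,r)$ and forms $I_\lambda(u_n)-\frac{1}{\gamma}\langle I_\lambda'(u_n),u_n\rangle$ (your choice $\gamma=2p$, which exactly annihilates the Poisson term, is a harmless variant), and your interpolation--Young bound $\|u_n\|_q^q\leqslant \eta\|u_n\|_r^r+C_\eta\|u_n\|_p^p$ is precisely the correct mechanism, leading to the paper's inequality \eqref{4.3}--\eqref{4.4}, i.e.\ $\|u_n\|_E^p+\lambda\|u_n\|_r^r\leqslant d(\lambda)\bigl(1+\|u_n\|_q^q\bigr)$. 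The genuine gap is your final step. An ``approximate Pohoz\v{a}ev identity with error $O(\|u_n-A_\lambda(u_n)\|_E)$'' for merely approximate critical points is not available: the Pohoz\v{a}ev identity is obtained by testing the equation against $x\cdot\nabla u_n$, which is not an element of $E$ controlled by $\|u_n\|_E$, so the discrepancy cannot be estimated by $\|I_\lambda'(u_n)\|_{E^*}$ paired with anything bounded; worse, in your setup $\|I_\lambda'(u_n)\|_{E^*}\geqslant\alpha$, and Lemma~\ref{lemma4.1} only gives $\|I_\lambda'(u_n)\|_{E^*}=o(1)(1+\|u_n\|_E^p)$, so even formally the ``error'' is of the same order as the leading terms when $\|u_n\|_E\to\infty$. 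The paper invokes the Pohoz\v{a}ev identity \eqref{4.12} and $(V_2)$ only for the \emph{exact} critical points $u_\lambda$ (with $I_\lambda'(u_\lambda)=0$) in Step 2 of the proof of Theorem~\ref{theorem1.2}, where it is legitimate. Note also that if your step were valid, the same combination would bound arbitrary (PS)-type sequences of $I$ itself, making the whole perturbation scheme of Section 4 unnecessary --- a strong sign the step cannot work.

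Your premise that the Ambrosetti--Rabinowitz scheme ``does not close'' because $C_\eta\|u_n\|_p^p$ cannot be absorbed into $\|u_n\|_E^p$ is also mistaken: the paper closes it, for fixed $\lambda$, by a normalization argument that needs neither $(V_2)$ nor Pohoz\v{a}ev. Suppose $\|u_n\|_E\to\infty$ and set $w_n=u_n/\|u_n\|_E$. Dividing $\|u_n\|_E^p+\lambda\|u_n\|_r^r\leqslant d_2(1+\|u_n\|_q^q)$ by $\|u_n\|_E^r$ and using $q<r$ gives $\|w_n\|_r\to0$, hence $w=0$ by the compact embedding $E\hookrightarrow L^r$ ($r<p^*$); on the other hand, interpolation with $\frac1q=\frac tp+\frac{1-t}{r}$ applied to \eqref{4.4} yields $\|u_n\|_q^q\leqslant d_3(\lambda)\|u_n\|_p^p$, whence $\|w_n\|_p^p\geqslant \bigl(d_5(\lambda)\bigr)^{-1}>0$, contradicting $w=0$ in $L^p$. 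This is exactly where the $\lambda$-dependence of $\beta(\lambda)$ comes from: the coercivity is supplied by the perturbation $\lambda\|u_n\|_r^r$ with $r>\max(2p,q)$, degenerating as $\lambda\to0^+$. To repair your proof, delete the Pohoz\v{a}ev step and finish with this normalized-sequence/interpolation argument, then conclude as in Lemma~\ref{lemma3.5}.
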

\begin{proof}
	By Lemma \ref{lemma4.2}, we prove that $I$ satisfies the (PS) condition. Assuming that $c\in\mathbb{R},n\in\mathbb{N}$, for $\{u_{n}\}\subset E$ with $I_{\lambda}(u_{n})\to c,I_{\lambda}'(u_{n})\to 0$ as $n\to\infty$. Fix a number $\gamma\in(2p,r)$, for $u\in E$,
	\begin{align}
		I_{\lambda}(u)&-\frac{1}{\gamma}\langle I'_{\lambda}(u),u\rangle\notag\\=&\;\frac{1}{p}\int_{\mathbb{R}^{3}}\bigl(|\nabla u|^{p}+V(x)|u|^{p}\bigr)+\frac{1}{2p}\int_{\mathbb{R}^{3}}\phi_{u}|u|^{p}-\int_{\mathbb{R}^{3}}F(u)-\frac{\lambda}{r}\int_{\mathbb{R}^{3}}|u|^{r}\notag\\
		&-\frac{1}{\gamma}\biggl(\int_{\mathbb{R}^{3}}|\nabla u|^{p}+\int_{\mathbb{R}^{3}}V(x)|u|^{p}+\int_{\mathbb{R}^{3}}\phi_{u}|u|^{p}-\int_{\mathbb{R}^{3}}f(u)u-\lambda\int_{\mathbb{R}^{3}}|u|^{r}\biggr)\notag\\
		=&\;\biggl(\frac{1}{p}-\frac{1}{\gamma}\biggr)\|u\|_{E}^{p}+\biggl(\frac{1}{2p}-\frac{1}{\gamma}\biggr)\int_{\mathbb{R}^{3}}\phi_{u}|u|^{p}+\int_{\mathbb{R}^{3}}\biggl(\frac{1}{\gamma}f(u)u-F(u)\biggr)\notag\\
		&+\lambda\biggl(\frac{1}{\gamma}-\frac{1}{r}\biggr)\int_{\mathbb{R}^{3}}|u|^{r}.\notag
	\end{align}
	Then Young's inequality implies that
	\begin{equation}
		\|u_{n}\|_{E}^{p}+\int_{\mathbb{R}^{3}}\phi_{u_{n}}|u_{n}|^{p}+\lambda\|u_{n}\|_{r}^{r}\leqslant d_{1}\bigl(\|I'_{\lambda}(u_{n})\|_{E^{*}}+\|I'_{\lambda}(u_{n})\|_{E^{*}}\|u_{n}\|_{E}+\|u_{n}\|_{q}^{q}\bigr).\label{4.3}\tag{4.3}
	\end{equation}
	Assume that there exist $u_{n}\subset E,I_{\lambda}(u_{n})\in[a,b]$ and $\|I_{\lambda}'(u_{n})\|_{E^{*}}\geqslant\alpha$, $\|u_{n}-A_{\lambda}(u_{n})\|_{E}\to0$ as $n\to\infty$. By \eqref{4.3}, for $n$ large enough, we get
	\[\|u_{n}\|_{E}^{p}+\lambda\|u_{n}\|_{r}^{r}\leqslant d_{2}\bigl(1+\|u_{n}\|_{q}^{q}\bigr).\]
	
	We claim that $\{u_{n}\}$ is bounded in $E$. Otherwise, assume that $u_{n}\to\infty$ as $n\to\infty$. Let $w_{n}=\frac{u_{n}}{\|u_{n}\|_{E}}$, passing to a subsequence, there exists $w\in E$ such that  $w_{n}\rightharpoonup w$ in $E$, by Remark \ref{remark2.2}, $w_{n}\to w$ in $L^{q},q\in \left[\;p,p^{*}\right)$. By \eqref{4.3} and $p<q<r$, we have
	\[\|u_{n}\|_{E}^{p-r}+\lambda\|w_{n}\|_{r}^{r}\leqslant \frac{d_{2}\bigl(1+\|u_{n}\|_{q}^{q}\bigr)}{\|u_{n}\|_{E}^{r}}.\]
	Hence,
	\[\|w_{n}\|_{r}^{r}\leqslant0.\]
	Therefore $w=0$. But if $n$ is large enough, there exists
	\begin{equation}
		\|u_{n}\|_{E}^{p}+\lambda\|u_{n}\|_{r}^{r}\leqslant d_{3}\|u_{n}\|_{q}^{q}.\label{4.4}\tag{4.4}
	\end{equation}
	By \eqref{4.4}, there exists a number $d(\lambda)>0$ such that for $n$ large enough, such that
	\[\|u_{n}\|_{p}^{p}+\|u_{n}\|_{r}^{r}\leqslant d(\lambda)\|u_{n}\|_{q}^{q}.\]
	Let $t\in (0,1)$ be such that
	\[\frac{1}{p}=\frac{t}{2}+\frac{1-t}{r}.\]
	Then the interpolation inequality implies that
	\[\|u_{n}\|_{p}^{p}+\|u_{n}\|_{r}^{r}\leqslant d(\lambda)\|u_{n}\|_{q}^{q}\leqslant d(\lambda)\|u_{n}\|_{p}^{tq}\|u_{n}\|_{r}^{(1-t)q}.\]
	Hence there exists $d_{1}(\lambda),d_{2}(\lambda)>0$, for $n$ large enough, such that
	\[d_{1}(\lambda)\|u_{n}\|_{p}^{\frac{p}{r}}\leqslant\|u_{n}\|_{r}\leqslant d_{2}(\lambda)\|u_{n}\|_{p}^{\frac{p}{r}}.\]
	Hence,
	\[\|u_{n}\|_{q}^{q}\leqslant d_{3}(\lambda)\|u_{n}\|_{p}^{p}.\]
	And by \eqref{4.4}, we have
	\[\|u_{n}\|_{p}^{p}+\|u_{n}\|_{r}^{r}\leqslant d_{4}(\lambda)\|u_{n}\|_{q}^{q}.\]
	Furthermore,
	\[\|w_{n}\|_{p}^{p}\geqslant \bigl(d_{5}(\lambda)\bigr)^{-1}.\]
	Since $w_{n}\to w$ in $L^{p}$, one sees that
	\[\|w\|_{p}^{p}\geqslant \bigl(d_{5}(\lambda)\bigr)^{-1},\]
	which contradicts $w=0$. Therefore $\{u_{n}\}$ is bounded in $E$. Similar to Lemma \ref{lemma3.5}, the proof is completed.
\end{proof}

\begin{lem}\label{lemma4.4}
	There exists $\varepsilon_{1}>0$ such that for $\varepsilon\in(0,\varepsilon_{1})$,\\
	$(1)$\;$A_{\lambda}(\partial P_{\varepsilon}^{-})\subset P_{\varepsilon}^{-}$ and every nontrivial solution $u\in P_{\varepsilon}^{-}$ is negative,\\
	$(2)$\;$A_{\lambda}(\partial P_{\varepsilon}^{+})\subset P_{\varepsilon}^{+}$ and every nontrivial solution $u\in P_{\varepsilon}^{+}$ is positive.
\end{lem}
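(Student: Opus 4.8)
The plan is to reproduce the proof of Lemma \ref{lemma3.6} almost verbatim; the only genuinely new ingredient is the control of the perturbation term $\lambda\int_{\mathbb{R}^3}|u|^{r-2}u\,v^+$ coming from the right-hand side of \eqref{4.2}. I would prove only part $(1)$, since $(2)$ follows from the identical computation carried out with $v^-$ in place of $v^+$ (and the relation $P_\varepsilon^-=-P_\varepsilon^+$).

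First I would fix $u\in E$, set $v=A_\lambda(u)$, and recall from the proof of Lemma \ref{lemma3.6} the two structural facts $\text{dist}(v,P^-)\leqslant\|v^+\|_E$ and $\|u^+\|_q\leqslant m_q\,\text{dist}(u,P^-)$ for every $q\in[p,p^*)$. Testing \eqref{4.2} against $v^+$ and invoking $\phi_u\geqslant0$ from Lemma \ref{lemma2.1}$(1)$ (so that $\int_{\mathbb{R}^3}\phi_u|v|^{p-2}v\,v^+=\int_{\mathbb{R}^3}\phi_u|v^+|^p\geqslant0$) yields the identity
\[
\int_{\mathbb{R}^3}\bigl(|\nabla v^+|^p+V(x)|v^+|^p\bigr)=\int_{\mathbb{R}^3}f(u)v^+ +\lambda\int_{\mathbb{R}^3}|u|^{r-2}u\,v^+-\int_{\mathbb{R}^3}\phi_u|v^+|^p.
\]
Since both $f(s)$ and $|s|^{r-2}s$ carry the sign of $s$ (the former because $(f_3)$ forces $tf(t)>0$), on the support of $v^+\geqslant0$ one may replace $u$ by $u^+$ and discard the nonnegative Poisson term, giving
\[
\text{dist}(v,P^-)^{p-1}\|v^+\|_E\leqslant\int_{\mathbb{R}^3}f(u^+)v^+ +\lambda\int_{\mathbb{R}^3}|u^+|^{r-1}v^+.
\]

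The first term is estimated by \eqref{3.6} and H\"older's inequality exactly as in Lemma \ref{lemma3.6}, producing a factor $C(\delta+C_\delta\,\text{dist}(u,P^-)^{q-p})$. For the new term I would use H\"older's inequality and the claim above to get $\lambda\int_{\mathbb{R}^3}|u^+|^{r-1}v^+\leqslant\lambda\|u^+\|_r^{r-1}\|v^+\|_r\leqslant C\lambda\,\text{dist}(u,P^-)^{r-1}\|v^+\|_E$. Cancelling $\|v^+\|_E$ then gives
\[
\text{dist}(v,P^-)^{p-1}\leqslant C\bigl(\delta+C_\delta\,\text{dist}(u,P^-)^{q-p}+\lambda\,\text{dist}(u,P^-)^{r-p}\bigr)\,\text{dist}(u,P^-)^{p-1}.
\]
Because $p<q<p^*$ and $r>2p>p$, both exponents $q-p$ and $r-p$ are strictly positive, so I first fix $\delta$ small and then pick $\varepsilon_1>0$ so small that the bracketed factor is at most $\tfrac34$ whenever $\text{dist}(u,P^-)<\varepsilon_1$; since $\lambda\in(0,1]$, the crude bound $\lambda\leqslant1$ lets me absorb $\lambda$ into $C$ and choose $\varepsilon_1$ \emph{independent of} $\lambda$, which is exactly what the limiting argument of Section 4 will need. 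This delivers the contraction $\text{dist}(v,P^-)^{p-1}\leqslant\tfrac34\,\text{dist}(u,P^-)^{p-1}$ on $P_\varepsilon^-$, hence $A_\lambda(\partial P_\varepsilon^-)\subset P_\varepsilon^-$. Finally, if $u\in P_\varepsilon^-$ satisfies $A_\lambda(u)=u$, the same contraction forces $\text{dist}(u,P^-)=0$, so $u\in P^-$, i.e.\ $u\leqslant0$, and the maximum principle upgrades this to $u<0$ when $u\not\equiv0$.

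The only real obstacle is confirming that the perturbation term is genuinely of higher order, namely that its exponent $r-p$ is strictly positive and that the $\lambda$-dependence can be made harmless uniformly on $(0,1]$. This is precisely why $r$ was chosen in $\bigl(\max(2p,q),p^*\bigr)$ and why the restriction $\lambda\in(0,1]$ is imposed; everything else is a transcription of Lemma \ref{lemma3.6}.
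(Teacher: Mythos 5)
Your proposal is correct and follows essentially the same route as the paper: test \eqref{4.2} with $v^{+}$, discard the nonnegative Poisson term, estimate $f(u^{+})v^{+}$ via \eqref{3.6} and the extra term via H\"older and $\|u^{+}\|_{r}\leqslant m_{r}\,\mathrm{dist}(u,P^{-})$, arriving at exactly the paper's contraction inequality $\mathrm{dist}(v,P^{-})^{p-1}\leqslant C\bigl(\delta+C_{\delta}\,\mathrm{dist}(u,P^{-})^{q-p}+\lambda\,\mathrm{dist}(u,P^{-})^{r-p}\bigr)\mathrm{dist}(u,P^{-})^{p-1}$, then choosing $\delta$ and $\varepsilon_{1}$ small. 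Your explicit observation that $\lambda\leqslant1$ and $r-p>0$ make $\varepsilon_{1}$ independent of $\lambda$ is a point the paper leaves implicit but needs for the $\lambda\to0^{+}$ limit, so it is a welcome clarification rather than a deviation.
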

\begin{proof}
	The proof is similar to the proof of Lemma \ref{lemma3.6}, and we get
	\begin{align}
		\text{dist}(v,P^{-})^{p-1}\|v^{+}\|_{E}\leqslant&\int_{\mathbb{R}^{3}}\bigl(\delta|u^{+}|^{p-1}+C_{\delta}|u^{+}|^{q-1}\bigr)v^{+}+\lambda\int_{\mathbb{R}^{3}}|u^{+}|^{r-2}u^{+}v^{+}\notag\\
		=&\delta\|u^{+}\|_{p}^{p-1}\|v^{+}\|_{p}+C_{\delta}\|u^{+}\|_{q}^{q-1}\|v^{+}\|_{q}+\lambda\|u^{+}\|_{r}^{r-1}\|v^{+}\|_{r}\notag\\
		\leqslant& C\bigl(\delta+C_{\delta}\text{dist}(u,P^{-})^{q-p}+\lambda\text{dist}(u,P^{-})^{r-p}\bigr)\notag\\
		&\text{dist}(u,P^{-})^{p-1}\|v^{+}\|_{E},\notag
	\end{align}
	which further implies that
	\[\text{dist}(v,P^{-})^{p-1}\leqslant C\bigl(\delta+C_{\delta}\text{dist}(u,P^{-})^{q-p}+\lambda\text{dist}(u,P^{-})^{r-p}\bigr)\text{dist}(u,P^{-})^{p-1}.\]
	There exists $\varepsilon_{1}>0$ such that for $\varepsilon\in(0,\varepsilon_{1})$,
	\[\text{dist}(v,P^{-})^{p-1}\leqslant\biggl(\frac{1}{6}+\frac{1}{6}+\frac{1}{6}\biggr)\text{dist}(u,P^{-})^{p-1},\qquad\forall u\in P_{\varepsilon}^{-}.\]
	Therefore, we have $A_{\lambda}(\partial P_{\varepsilon}^{-})\subset P_{\varepsilon}^{-}$. If there exists $u\in P_{\varepsilon}^{-}$ such that $A_{\lambda}(u)=u$, then $u\in P^{-}$. If $u\not\equiv0$, by the maximum principle, $u<0$ in $\mathbb{R}^{3}$.
\end{proof}

\begin{lem}\label{lemma4.5}
	There exists a locally Lipschitz continuous map $B_{\lambda}:E\backslash K_{\lambda}\to E$, where $K_{\lambda}:=Fix(A_{\lambda}),$ such that\\
	$(1)$\;$B_{\lambda}(\partial P_{\varepsilon}^{+})\subset P_{\varepsilon}^{+},\;B_{\lambda}(\partial P_{\varepsilon}^{-})\subset P_{\varepsilon}^{-}$ for $\varepsilon\in (0,\varepsilon_{1});$\\
	$(2)$\;for all $u\in E\backslash K_{\lambda},$
	\[\frac{1}{2}\|u-B_{\lambda}(u)\|_{E}\leqslant\|u-A_{\lambda}(u)\|_{E}\leqslant2\|u-B_{\lambda}(u)\|_{E};\]
	$(3)$\;for all $u\in E\backslash K_{\lambda},$ if $1<p\leqslant2$
	\[\langle I_{\lambda}'(u),u-B_{\lambda}(u)\rangle\geqslant\frac{1}{2}a_{1}\|u-A_{\lambda}(u)\|_{E}^{2}\bigl(\|u\|_{E}+\|A_{\lambda}(u)\|_{E}\bigr)^{p-2};\]
	if $p\geqslant2$,
	\[\langle I_{\lambda}'(u),u-B_{\lambda}(u)\rangle\geqslant\frac{1}{2}a_{1}\|u-A_{\lambda}(u)\|_{E}^{p};\]
	$(4)$\;if $I$ is even,\,$A$ is odd, then $B$ is odd.
\end{lem}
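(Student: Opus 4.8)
The plan is to mirror the construction of Lemma \ref{lemma3.7}, replacing $A$ by $A_\lambda$ and $I$ by $I_\lambda$; this is the locally Lipschitz regularization of a merely continuous operator from \cite[Lemma 4.1]{bartsch2004superlinear} and \cite[Lemma 2.1]{bartsch2005nodal}. All the ingredients are already available: $A_\lambda$ is continuous and compact, Lemma \ref{lemma4.1} supplies the descent estimate $\langle I_\lambda'(u),u-A_\lambda(u)\rangle\geqslant a_1\|u-A_\lambda(u)\|_E^2(\|u\|_E+\|A_\lambda(u)\|_E)^{p-2}$ (resp. $\geqslant a_1\|u-A_\lambda(u)\|_E^p$) together with the bound on $\|I_\lambda'(u)\|_{E^*}$, Lemma \ref{lemma4.4} gives $A_\lambda(\partial P_\varepsilon^{\pm})\subset P_\varepsilon^{\pm}$, and $A_\lambda$ is odd whenever $f$ is odd.

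First I would fix $u_0\in E\backslash K_\lambda$. Since $A_\lambda$ is continuous and $u_0\neq A_\lambda(u_0)$, there are an open neighborhood $N_{u_0}\subset E\backslash K_\lambda$ and a point $y_{u_0}\in E$ with $\|A_\lambda(u)-y_{u_0}\|_E<\tfrac12\|u-A_\lambda(u)\|_E$ for all $u\in N_{u_0}$, and, after shrinking $N_{u_0}$, also $\|I_\lambda'(u)\|_{E^*}\,\|A_\lambda(u)-y_{u_0}\|_E\leqslant\tfrac12 a_1\|u-A_\lambda(u)\|_E^{2}(\|u\|_E+\|A_\lambda(u)\|_E)^{p-2}$ in the case $1<p\leqslant2$ (and the analogous bound with exponent $p$ when $p\geqslant2$). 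When $u_0\in\partial P_\varepsilon^{\pm}$ I would moreover take $y_{u_0}\in P_\varepsilon^{\pm}$, which is possible because $A_\lambda(u_0)\in P_\varepsilon^{\pm}$ by Lemma \ref{lemma4.4} and $P_\varepsilon^{\pm}$ is open.

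Next I would choose a locally finite open refinement $\{N_i\}$ of $\{N_{u_0}\}_{u_0}$ with associated points $\{y_i\}$, a locally Lipschitz partition of unity $\{\rho_i\}$ subordinate to it, and set $B_\lambda(u)=\sum_i\rho_i(u)y_i$. Local finiteness makes $B_\lambda$ locally Lipschitz. Property $(2)$ follows from the triangle inequality and $\|A_\lambda(u)-y_i\|_E<\tfrac12\|u-A_\lambda(u)\|_E$; property $(3)$ from the linearity $\langle I_\lambda'(u),u-B_\lambda(u)\rangle=\sum_i\rho_i(u)\langle I_\lambda'(u),u-y_i\rangle$ together with $\langle I_\lambda'(u),u-y_i\rangle\geqslant\langle I_\lambda'(u),u-A_\lambda(u)\rangle-\|I_\lambda'(u)\|_{E^*}\|A_\lambda(u)-y_i\|_E$ and the local choice above; property $(1)$ because near $\partial P_\varepsilon^{\pm}$ every active $y_i$ lies in the convex set $P_\varepsilon^{\pm}$, hence so does the convex combination $B_\lambda(u)$; and property $(4)$ by symmetrizing, $u\mapsto\tfrac12\bigl(B_\lambda(u)-B_\lambda(-u)\bigr)$, using a symmetric cover, $G=-\mathrm{id}$, and the oddness of $A_\lambda$.

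The step I expect to require genuine care is property $(3)$: one must arrange, uniformly on each $N_i$, that replacing $A_\lambda(u)$ by the constant $y_i$ costs at most half of the descent, which forces the local approximation to control the $p$-dependent weight $(\|u\|_E+\|A_\lambda(u)\|_E)^{p-2}$ and hence the two regimes $1<p\leqslant2$ and $p\geqslant2$ to be handled separately, exactly as in Lemma \ref{lemma4.1}. The nonlocal term $\phi_u|u|^{p-2}u$ and the perturbation $\lambda|u|^{r-2}u$ enter only through the already-established estimates of Lemma \ref{lemma4.1} and cause no extra difficulty. The remaining verifications are routine and identical to \cite[Lemma 4.1]{bartsch2004superlinear} and \cite[Lemma 2.1]{bartsch2005nodal}, so I would omit them.
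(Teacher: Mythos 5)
Your construction is correct and is essentially the paper's own route: the paper proves Lemma \ref{lemma4.5} by declaring it similar to Lemma \ref{lemma3.7}, which in turn defers to \cite[Lemma 4.1]{bartsch2004superlinear} and \cite[Lemma 2.1]{bartsch2005nodal}, i.e. exactly the locally Lipschitz partition-of-unity regularization of $A_{\lambda}$ you carry out, with Lemma \ref{lemma4.1} and Lemma \ref{lemma4.4} supplying the descent estimates and the cone invariance. The one point to phrase more carefully is property $(1)$: at $u\in\partial P_{\varepsilon}^{\pm}$ the active indices include neighborhoods centered at points $u_{0}\notin\partial P_{\varepsilon}^{\pm}$, so you must arrange $y_{u_{0}}\in P_{\varepsilon}^{\pm}$ for every center whose neighborhood meets $\overline{P_{\varepsilon}^{\pm}}$ (not merely for $u_{0}$ on the boundary), which is possible because the proof of Lemma \ref{lemma4.4} shows $A_{\lambda}$ strictly contracts $\mathrm{dist}(\cdot,P^{\mp})$ on $P_{\varepsilon}^{\pm}$, so by continuity $A_{\lambda}(u_{0})\in P_{\varepsilon}^{\pm}$ for all $u_{0}$ sufficiently close to $\overline{P_{\varepsilon}^{\pm}}$ and small enough neighborhoods suffice.
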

\begin{proof}
	The proof is similar to the proof of Lemma \ref{lemma3.7}.
\end{proof}

We are ready to prove Theorem \ref{theorem1.2}.
\begin{proof}
	\textbf{(Existence part)} \textbf{Step1.} In what follows, we divide two steps to complete the proof. We firstly claim that $\{P_{\varepsilon}^{+},P_{\varepsilon}^{-}\}$ is an admissible family of invariant sets for the functional $I_{\lambda}$ at any level $c$. Choose $v_{1},v_{2}\in C_{0}^{\infty}(B_{1}(0))\backslash\{0\},\;v_{1}\leqslant0,v_{2}\geqslant0$ such that $\text{supp}(v_{1})\cap \text{supp}(v_{2})=\emptyset$. For $(t,s)\in\Delta$, let
	\[\varphi_{0}(t,s)(\cdot):=R^{2}\bigl(tv_{1}(R\cdot)+sv_{2}(R\cdot)\bigr).\]
	where $R$ is a positive constant. Then for $t,s\in [0,1],\varphi_{0}(0,s)(\cdot)=R^{2}sv_{2}(R\cdot)\in P_{\varepsilon}^{+}$ and $\varphi_{0}(t,0)(\cdot)=R^{2}tv_{1}(R\cdot)\in P_{\varepsilon}^{-}$. Similar to \eqref{3.10}, for any $\varepsilon>0$,
	\[I_{\lambda}(u)\geqslant I(u)\geqslant\frac{1}{p}\varepsilon^{p},\qquad u\in\Sigma:=\partial P_{\varepsilon}^{+}\cap\partial P_{\varepsilon}^{-},\;\lambda\in(0,1).\]
	That means $c_{\lambda}^{*}:=\inf_{u \in \Sigma}I_{\lambda}(u)\geqslant\frac{1}{p}\varepsilon^{p}$ for $\lambda\in(0,1)$. Let $u_{t}=\varphi_{0}(t,1-t)$ for $t\in[0,1]$, we get
	\begin{align}
		\int_{\mathbb{R}^{3}}|\nabla u_{t}|^{p}&=R^{3p-3}\int_{\mathbb{R}^{3}}\bigl(t^{p}|\nabla v_{1}|^{p}+(1-t)^{p}|\nabla v_{2}|^{p}\bigr),\label{4.5}\tag{4.5}\\
		\int_{\mathbb{R}^{3}}|u_{t}|^{p}&=R^{2p-3}\int_{\mathbb{R}^{3}}\bigl(t^{p}|v_{1}|^{p}+(1-t)^{p}|v_{2}|^{p}\bigr),\label{4.6}\tag{4.6}\\
		\int_{\mathbb{R}^{3}}|u_{t}|^{\mu}&=R^{2\mu-3}\int_{\mathbb{R}^{3}}\bigl(t^{\mu}|v_{1}|^{\mu}+(1-t)^{\mu}|v_{2}|^{\mu}\bigr).\label{4.7}\tag{4.7}\\
		\int_{\mathbb{R}^{3}}\phi_{u_{t}}|u_{t}|^{p}&=R^{3p-3}\int_{\mathbb{R}^{3}}\phi_{\widetilde{u_{t}}}|\widetilde{u_{t}}|^{p},\;where\;\widetilde{u_{t}}=tv_{1}+(1-t)v_{2}.\label{4.8}\tag{4.8}
	\end{align}
	We use $(f_{3})$, $F(t)\geqslant C_{1}|t|^{\mu}-C_{2}$ for any $t\in\mathbb{R}$, and \eqref{4.5}-\eqref{4.8}. It follows that
	\begin{align}
		I_{\lambda}(u_{t})=&\;\frac{1}{p}\int_{\mathbb{R}^{3}}\bigl(|\nabla u_{t}|^{p}+V(x)|u_{t}|^{p}\bigr)+\frac{1}{2p}\int_{\mathbb{R}^{3}}\phi_{u_{t}}|u_{t}|^{p}\notag\\
		&-\int_{\text{supp}(v_{1})\cup \text{supp}(v_{2})}F(u_{t})-\frac{\lambda}{r}\int_{\mathbb{R}^{3}}|u_{t}|^{r}\notag\\
		\leqslant&\;\frac{1}{p}\int_{\mathbb{R}^{3}}\bigl(|\nabla u_{t}|^{p}+V(x)|u_{t}|^{p}\bigr)+\frac{1}{2p}\int_{\mathbb{R}^{3}}\phi_{u_{t}}|u_{t}|^{p}\notag\\
		&-C_{1}\|u_{t}\|_{\mu}^{\mu}+C_{3}-\frac{\lambda}{r}\int_{\mathbb{R}^{3}}|u_{t}|^{r}\notag\\
		\leqslant&\;\frac{R^{3p-3}}{p}\int_{\mathbb{R}^{3}}\bigl(t^{p}|\nabla v_{1}|^{p}+(1-t)^{p}|\nabla v_{2}|^{p}\bigr)+\frac{R^{3p-3}}{2p}\int_{\mathbb{R}^{3}}\phi_{\widetilde{u_{t}}}|\widetilde{u_{t}}|^{p}\notag\\
		&+\frac{R^{2p-3}}{p}\max_{|x|\leqslant1}V(x)\int_{\mathbb{R}^{3}}\bigl(t^{p}|v_{1}|^{p}+(1-t)^{p}|v_{2}|^{p}\bigr)\notag\\
		&-C_{1}R^{2\mu-3}\int_{\mathbb{R}^{3}}\bigl(t^{\mu}|v_{1}|^{\mu}+(1-t)^{\mu}|v_{2}|^{\mu}\bigr)+C_{2}R^{-3}\notag\\
		&-\frac{\lambda R^{2r-3}}{r}\int_{\mathbb{R}^{3}}\bigl(t^{\mu}|v_{1}|^{\mu}+(1-t)^{\mu}|v_{2}|^{\mu}\bigr).\notag
	\end{align}
	Since	  $\mu>p,r\in\bigl(\text{max}(2p,q),p^{*}\bigr)$, $I_{\lambda}(u)\to-\infty$ uniformly as $R\to \infty$ for $\lambda\in(0,1),t\in[0,1]$. Hence, choosing a $R$ independent of $\lambda$ and large enough, we have
	\begin{equation}
		\sup_{u\in\varphi_{0}(\partial_{0}\Delta)}I_{\lambda}(u)<0<c_{\lambda}^{*}:=\inf_{u \in \Sigma}I_{\lambda}(u),\qquad \lambda\in(0,1).\label{4.9}\tag{4.9}
	\end{equation}
	Since $\|u_{t}\|_{p}\to\infty$ as $R\to\infty$ for $t\in[0,1]$. It follows from the proof of Theorem \ref{theorem1.1}, we get $\varphi_{0}(\partial_{0}\Delta)\cap M=\emptyset$ for $R$ independent of $\lambda$ and large enough. Thus, the condition $(2)$ of Theorem \ref{theorem3.2} is satisfied. Therefore,
	\[c_{\lambda}=\inf_{\varphi\in\Gamma}\sup_{u\in\varphi(\Delta)\backslash W}I_{\lambda}(u),\]
	$c_{\lambda}$ is a critical value of $I_{\lambda}$ satisfing $c_{\lambda}\geqslant c_{\lambda}^{*}$. Thus, there exists $u_{\lambda}\in E\backslash(P_{\varepsilon}^{+}\cup P_{\varepsilon}^{-})$ such that $I_{\lambda}(u_{\lambda})=c_{\lambda},I_{\lambda}'(u_{\lambda})=0$.
	
	\textbf{Step2.} Passing to the limit as $\lambda\to0$. By the definition of $c_{\lambda}$, we have for $\lambda\in(0,1)$,
	\[c_{\lambda}\leqslant c(R):=\sup_{u\in\varphi_{0}(\partial_{0}\Delta)}I(u)<\infty.\]
	We claim that $\{u_{\lambda}\}$ is bounded in $E$ for $\lambda\in(0,1)$. Firstly, we have
	\begin{equation}
		I_{\lambda}(u_{\lambda})=c_{\lambda}=\frac{1}{p}\int_{\mathbb{R}^{3}}\bigl(|\nabla u_{\lambda}|^{p}+V(x)|u_{\lambda}|^{p}\bigr)+\frac{1}{2p}\int_{\mathbb{R}^{3}}\phi_{u_{\lambda}}|u_{\lambda}|^{p}-\int_{\mathbb{R}^{3}}\biggl(F(u_{\lambda})+\frac{\lambda}{r}|u_{\lambda}|^{r}\biggr)\label{4.10}\tag{4.10}
	\end{equation}
	and
	\begin{equation}
		I_{\lambda}'(u_{\lambda})u_{\lambda}=0=\int_{\mathbb{R}^{3}}\bigl(|\nabla u_{\lambda}|^{p}+V(x)|u_{\lambda}|^{p}+\phi_{u_{\lambda}}|u_{\lambda}|^{p}-f(u_{\lambda})u_{\lambda}-\lambda|u_{\lambda}|^{r}\bigr).\label{4.11}\tag{4.11}
	\end{equation}
	Moreover, similar to \cite{d2004non}, we have the Pohoz\v{a}ev identity,
	\begin{align}
		&\frac{3-p}{p}\int_{\mathbb{R}^{3}}|\nabla u_{\lambda}|^{p}+\frac{3}{p}\int_{\mathbb{R}^{3}}V(x)|u_{\lambda}|^{p}+\frac{1}{p}\int_{\mathbb{R}^{3}}|u_{\lambda}|^{p}\nabla V(x)\cdot x\notag\\
		&\;\;\;+\frac{5}{2p}\int_{\mathbb{R}^{3}}\phi_{u_{\lambda}}|u_{\lambda}|^{p}-\int_{\mathbb{R}^{3}}\biggl(3F(u_{\lambda})+\frac{3\lambda}{r}|u_{\lambda}|^{r}\biggr)=0.\label{4.12}\tag{4.12}
	\end{align}
	Multiplying \eqref{4.10} by $\frac{5\mu}{2}-3p$, \eqref{4.11} by $-1$, \eqref{4.12} by $p-\frac{\mu}{2}$ and adding them up, we obtain
	\begin{align}
		&\frac{2\mu-2p-2p^{2}+p\mu}{2p}\int_{\mathbb{R}^{3}}|\nabla u_{\lambda}|^{p}+\frac{2p-\mu}{2p}\int_{\mathbb{R}^{3}}\biggl(\frac{\mu-p}{2p-\mu}V(x)+\nabla V(x)\cdot x\biggr)|u_{\lambda}|^{p}\notag\\
		&+\int_{\mathbb{R}^{3}}\bigl(u_{\lambda}f(u_{\lambda})-\mu F(u_{\lambda})\bigr)+\biggl(\frac{r-\mu}{r}\biggr)\lambda\int_{\mathbb{R}^{3}}|u_{\lambda}|^{r}=\biggl(\frac{5\mu}{2}-3p\biggr)c_{\lambda}.\notag
	\end{align}
	By $(V_{2}),(f_{3})$,\eqref{4.9} and $\frac{2p(p+1)}{p+2}<\mu\leqslant 2p<r$, we get
	\[\frac{5\mu}{2}-3p>0,\frac{2\mu-2p-2p^{2}+p\mu}{2p}>0.\]
	Therefore, $\{\int_{\mathbb{R}^{3}}|\nabla u_{\lambda}|^{p}\},\{\lambda\int_{\mathbb{R}^{3}}|u_{\lambda}|^{r}\}$ is uniformly bounded for $\lambda\in(0,1)$.\\
	Moreover, by $\eqref{4.11}$ and $\eqref{4.12}$, we obtain
	\begin{align}
		c_{\lambda}=&\frac{r-p}{p}\int_{\mathbb{R}^{3}}|\nabla u_{\lambda}|^{p}+\frac{r-p}{p}\int_{\mathbb{R}^{3}}V(x)|u_{\lambda}|^{p}+\frac{r-2p}{2p}\phi_{u_{\lambda}}|u_{\lambda}|^{p}\notag\\
		&+\int_{\mathbb{R}^{3}}\bigl(u_{\lambda}f(u_{\lambda})-rF(u_{\lambda})\bigr)+\biggl(\frac{r-1}{r}\biggr)\lambda\int_{\mathbb{R}^{3}}|u_{\lambda}|^{r}.\notag
	\end{align}
	One sees that $\int_{\mathbb{R}^{3}}V(x)|u_{\lambda}|^{p}$ is uniformly bounded for $\lambda\in(0,1)$. Therefore, $\{u_{\lambda}\}$ is bounded in $E$ for $\lambda\in(0,1)$. Up to a
	subsequence, $u_{\lambda}\rightharpoonup u$ in $E$ for $\lambda\to0^{+}$ . By Remark \ref{remark2.2}, $u_{\lambda}\to u$ in $L^{q}(\mathbb{R}^{N})$. By the Lemma \ref{lemma2.1}, $\phi_{u_{n}}\to\phi_{u}$ in $D^{1,2}(\mathbb{R}^{3})$. And by Ekeland's variational principle and deformation lemma, we have $I'(u)=0$. Thus, $u_{\lambda}\to u$ in $E$ for $\lambda\to0^{+}$. Moreover, the fact that $u_{\lambda}\in E\backslash(P_{\varepsilon}^{+}\cup P_{\varepsilon}^{-})$ and $I_{\lambda}(u_{\lambda})=c_{\lambda}\geqslant\frac{\varepsilon^{p}}{p}$ for $\lambda\in(0,1)$ implies $u\in E\backslash(P_{\varepsilon}^{+}\cup P_{\varepsilon}^{-})$ and $I(u)\geqslant\frac{\varepsilon^{p}}{p}$. Therefore, $u$ is a sign-changing solution of equation \eqref{4.1} i.e. system \eqref{1.1}.
\end{proof}

In the following, we prove the existence of infinitely many sign-changing solutions to \eqref{2.1}. We assume that $f$ is odd. Thanks to Lemma \ref{lemma4.1}-\ref{lemma4.4}, we have seen that $P_{\varepsilon}^{+}$ is a $G-$admissible invariant set for the functional $I_{\lambda}(0<\lambda<1)$ at any level $c$.

\begin{proof}
	\textbf{(Multiplicity part)} We divide the proof into three steps.\\
	\textbf{Step1.} Referring to \cite{liu2015multiple}, we construct $\varphi_{n}$ satisfying the assumptions in Theorem \ref{theorem3.2}. For any $n\in\mathbb{N}$, we choose $\{v_{i}\}_{1}^{n}\subset C_{0}^{\infty}(\mathbb{R}^{3})\backslash \{0\}$ such that $i\ne j,\;\text{supp}(v_{i})\cap\text{supp}(v_{j})=\emptyset$. Define $\varphi_{n}\in C(B_{n},E)$ as
	\[\varphi_{n}(t)(\cdot)=R_{n}^{2}\sum_{i=1}^{n}t_{i}v_{i}(R_{n}\cdot),\qquad t=(t_{1},t_{2},\cdots,t_{n})\in B_{n},\]
	where $R_{n}>0$ is a large number independent of $\lambda$ such that $\varphi_{n}(\partial B_{n})\cap(P_{\varepsilon}^{+}\cap P_{\varepsilon}^{-})=\emptyset$, which means the condition $(2)$ of Theorem \ref{theorem3.2} is satisfied.
	
	Moreover,
	\[\sup_{u\in\varphi_{n}(\partial B_{n})}I_{\lambda}(u)<0<\inf_{u\in \Sigma}I_{\lambda}(u)=c_{\lambda}^{*}.\]
	For the perturbed problem, the condition $(3)$ of Theorem \ref{theorem3.2} is satisfied. Obviously, $\varphi_{0}(0)=0\in P_{\varepsilon}^{+}\cap P_{\varepsilon}^{-},\varphi_{n}(-t)=-\varphi_{n}(t)$ for $t\in B_{n}$. This means the condition $(1)$ of Theorem \ref{theorem3.2} is satisfied.
	
	\textbf{Step2.} For any $j\in\mathbb{N},\lambda\in(0,1)$, we define
	\[c_{j}(\lambda)=\inf_{B \in \Gamma_{j}}\sup_{u \in B \backslash W}I_{\lambda}(u),\]
	where, $W:=P_{\varepsilon}^{+}\cup P_{\varepsilon}^{-}$ and $\Gamma_{j}$ match the definition of Theorem \ref{theorem3.2}. According to Theorem \ref{theorem3.2}, for any $0<\lambda<1,j\geqslant2$,
	\[0<\inf_{u \in \Sigma}I_{\lambda}(u):=c_{\lambda}^{*}\leqslant c_{j}(\lambda)\to\infty,\qquad \text{as} \;j\to\infty.\]
	Similar to the Step2 of the existence, there exists $\{u_{\lambda,j}\}_{j\geqslant2}\subset E\backslash W$ such that $I_{\lambda}(u_{\lambda,j})=c_{j}(\lambda)$, $I_{\lambda}'(u_{\lambda,j})=0$.
	
	\textbf{Step3.} Similarly, for any fixed $j\geqslant2,\{u_{\lambda,j}\}_{\lambda\in(0,1)}$ is bounded in $E$. Up to
	subsequence, $u_{\lambda,j}\rightharpoonup u_{j}$ as $\lambda\to0^{+}$ in $E$. Observe that $c_{j}(\lambda)$ is decreasing in $\lambda$. Let $c_{j}=\lim_{\lambda\to0^{+}}c_{j}(\lambda)$. Clearly $c_{j}(\lambda)\leqslant c_{j}<\infty$ for $\lambda\in(0,1)$. There exists  $u_{\lambda,j}\to u_{j}$, such that $I'(u_{j})=0,I(u_{j})=c_{j}$ as $\lambda\to0^{+}$ for some critical points $u_{j}\in E\backslash W$. Since $c_{j}\geqslant c_{j}(\lambda)$ and $\lim_{j\to\infty}c_{j}(\lambda)=\infty,\lim_{j\to\infty}c_{j}=\infty$. Therefore, the system \eqref{1.1} has infinitely many sign-changing solutions. The proof is completed.
\end{proof}

\section*{Acknowledgement}

This research is supported by the Fundamental Research Funds for the Central Universities 2019XKQYMS91 of China.

\bibliographystyle{tfnlm}
\bibliography{interactnlmsample.bib}
\end{document}